\DeclareMathOperator{\Free}{Free}
\author[a]{Reijo Jaakkola}
\author[b]{Antti Kuusisto}
\affil[a,b ]{Tampere University and University of Helsinki, Finland}
\date{}
\begin{document}

\setlength\abovedisplayskip{3pt}
\setlength\belowdisplayskip{3pt}
\title{Algebraic classifications for fragments of
first-order logic and beyond}

\theoremstyle{plain}
\newtheorem{theorem}{Theorem}[section]
\newtheorem{lemma}[theorem]{Lemma}
\newtheorem{corollary}[theorem]{Corollary}
\newtheorem{proposition}[theorem]{Proposition}
\theoremstyle{definition}
\newtheorem{definition}[theorem]{Definition}
\newtheorem{remark}[theorem]{Remark}
\newtheorem{example}[theorem]{Example}

\maketitle

\begin{abstract}
\noindent
Complexity and decidability of logics is a major research area involving a huge range of different logical systems. This calls for a unified and systematic approach for the field. We introduce a research program based on an algebraic approach to complexity classifications of fragments of first-order logic (FO) and beyond. Our base system GRA, or general relation algebra, is equiexpressive with FO. It resembles cylindric algebra but employs a finite signature with only seven different operators. We provide a comprehensive classification of the decidability and complexity of the systems obtained by limiting the allowed sets of operators. We also give algebraic characterizations of the best known decidable fragments of FO. Furthermore, to move beyond FO, we introduce the notion of a generalized operator and briefly study related systems.
\end{abstract}


\section{Introduction}

The failure of Hilbert's program and the realization of the undecidability of
first-order logic $\mathrm{FO}$ put an end to the most prestigious 
plans of automating mathematical reasoning. However, research with more
modest aims continued right away. Perhaps the most direct descendant of Hilbert's 
program was the work on the \emph{classical decision problem}, i.e., the initiative to
classify the quantifier prefix classes of $\mathrm{FO}$ according to 
whether they are decidable or not. This major program was successfully 
completed in the 1980's, see \cite{borger97} for an overview.

Subsequent work has been more scattered but highly active.
Currently, the state of the art on 
decidability and complexity of fragments of $\mathrm{FO}$
divides roughly into two branches:
research on variants of \emph{two-variable logic} $\mathrm{FO}^2$
and the \emph{guarded fragment} $\mathrm{GF}$. 
Two-variable logic $\mathrm{FO}^2$ is the fragment of $\mathrm{FO}$ where
only two variable symbols $x,y$ are allowed.
It was proved decidable in \cite{mortimer}
and \textsc{NexpTime}-complete in \cite{GKV97}.
The extension of $\mathrm{FO}^2$ with counting quantifiers, known as $\mathrm{C}^2$,
was proved decidable in \cite{GOR97,PST97}
and \textsc{NexpTime}-complete in \cite{PH05}.
Research on variants of $\mathrm{FO}^2$ is currently very active.
Recent work has focused on complexity issues
in restriction to particular structure classes and also questions
related to built-in relations, see, e.g., 
\cite{Be16, Witek16, Ki14, kihate, kopcz, Zeume13, torun}
for a selection of recent contributions.
See also \cite{U1a, U1b} where the \emph{uniform one-dimensional
fragment} $\mathrm{U}_1$ is defined. This system  
extends $\mathrm{FO}^2$ to a logic that allows an arbitrary 
number of variables but preserves most of the relevant metalogical
properties, including the \textsc{NexpTime}-completeness of the
satisfiability problem. The article \cite{u1surv} provides a 
survey on $\mathrm{U_1}$.

The guarded fragment $\mathrm{GF}$ was initially conceived as an
extension of modal logic, being a system where quantification is
similarly localized
as in the Kripke semantics for modal logic. After its introduction in
\cite{andreka}, it was soon proved \textsc{2ExpTime}-complete in 
\cite{gradel99}. The guarded fragment has proved successful in 
relation to applications, and it has been extended in
several ways. The \emph{loosely guarded} \cite{loosely},
\emph{clique guarded} \cite{clique} and
\emph{packed} \cite{packed} fragments
impose somewhat more liberal
conditions than $\mathrm{GF}$ for keeping quantification
localized, but the basic idea is the same. All these 
logics have the same
\textsc{2ExpTime}-complete complexity as $\mathrm{GF}$
(see, e.g., \cite{barany}). The more
recently introduced \emph{guarded
negation fragment} $\mathrm{GNFO}$ 
\cite{barany} is a very
expressive extension of $\mathrm{GF}$ based on 
restricting the use of negation in the same way $\mathrm{GF}$
restricts quantification. The logic $\mathrm{GNFO}$
also extends the \emph{unary negation fragment} $\mathrm{UNFO}$
\cite{segoufin} which is
orthogonal to $\mathrm{GF}$ in expressive power.
Despite indeed being quite expressive, $\mathrm{GNFO}$ shares
the \textsc{2ExpTime}-completeness
of $\mathrm{GF}$, and so does $\mathrm{UNFO}$.

Complexity of fragments of $\mathrm{FO}$ is
important also in 
knowledge representation, especially in relation to 
\emph{description logics} \cite{descriptionlogichandbook}.
In this field, complexities
are classified in great detail, 
operator by operator. The \emph{Description Logic
Complexity Navigator} at \url{http://www.cs.man.ac.uk/~ezolin/dl/}
provides an extensive and detailed taxonomy of the
best known relatively recent results.
Most description logics limit to vocabularies with
at most binary relations, but there are notable exceptions, e.g.,
the $\textsc{ExpTime}$-complete
logic $\mathcal{DLR}$ \cite{Calvanese}.

Somewhat less extensively studied decidable fragments of $\mathrm{FO}$
include the \emph{Maslov class} \cite{Maslov}; 
the \emph{fluted logic} \cite{quinefluted},
\cite{flutedlidiatendera}; the 
\emph{binding form} systems \cite{Mogavero}
and generalizations of prefix classes in, e.g.,
\cite{Voigt17}. We should also mention the \emph{monadic fragment} of $\mathrm{FO}$---possibly the first
non-trivial $\mathrm{FO}$-fragment shown
decidable \cite{Loovenheimijoojoo}. Of these frameworks,
research relating to \cite{Voigt17} and work on 
fluted logic has recently been active. The systems 
of \cite{Voigt17}
are largely based on limiting how the variables of
different atoms can overlap, 
while fluted logic restricts how variables
can be permuted.

While the completion of the classical decision problem in 
the 1980s was a major achievement, that project concentrated only on a
very limited picture of $\mathrm{FO}$: prefix classes only.
The restriction to prefix classes can be seen as a \emph{strong limitation}, both from the
theoretical and applied perspectives.
The subsequent research trends---e.g., the
work on 
the guarded fragment, $\mathrm{FO}^2$ 
and description logics---of course
lifted this limitation, leading to a more liberal theory with a 
wide range of applications from verification to database theory and
knowledge representation. Indeed, as we have seen above,
the current state of the art
studies a \emph{huge number} of different logical frameworks, tailored
for different purposes. However, consequently
the \emph{related research is 
scattered}, and could surely
\emph{benefit from a more systematic~approach}.
The current article suggests a framework for such a systematic approach.

\medskip

\noindent
\textbf{Our contributions.}
We introduce a research program for 
classifying complexity and decidability of
fragments of $\mathrm{FO}$ (and beyond) within an
\emph{algebraic framework}.
To this end, we define an algebraic system
designed to enable a \emph{systematic and fine-grained approach} to
classifying first-order fragments.
One of the key ideas is to identify a \emph{finite}
collection of operators to capture 
the expressive power of $\mathrm{FO}$, so our 
algebra has a finite signature. 
In $\mathrm{FO}$, there are essentially infinitely many
quantifiers $\exists x_i$ due to the different variable
symbols $x_i$, and this issue gives rise to
the infinite signature of \emph{cylindric set algebras}, which are 
the principal algebraic formulation of $\mathrm{FO}$. Basing our
investigations on finite signatures
leads to a highly controlled setting
that directly elucidates how the
expressive power of $\mathrm{FO}$ arises. This is achieved by listing a
finite set of operators that the expressivity of $\mathrm{FO}$ is based on.

The principal system we introduce is built on the
algebraic signature $$(e,p,s,I,\neg, J,\exists).$$ The atomic 
terms of the related algebra are simply
relation symbols (of any arity), and 
complex algebraic terms are built from atoms by applying the
operators in the signature
in the usual way.
This defines an 
algebra over every relational structure $\mathfrak{M}$. The
atomic terms $R$ are interpreted as the
corresponding
relations $R^{\mathfrak{M}}$.
The operators correspond to functions that modify 
relations into new relations over $\mathfrak{M}$ as follows. 
\begin{enumerate}

\item
$\neg$ is the complementation operator.
\item
$J$ is the join operator.
\item
$\exists$ is the existential 
quantification (or projection)
operator.
\item
$p$ is a cyclic permutation operator.
\item
$s$ a swap operator (swapping the last two elements of 
tuples).
\item
$I$ is an identification (or substitution) operator.
This operator deletes tuples whose
last two members are not identical and then projects away the last
coordinate of the remaining tuples. The operator allows us to 
perform operations that in standard (non-algebraic) FO would correspond to 
variable substitutions.
\item
$e$ is the constant operator
denoting the equality (or identity) relation over
the domain of the model $\mathfrak{M}$.
\end{enumerate}

We let $\mathrm{GRA}(e,p,s,I,\neg, J,\exists)$ refer to
the system based on these operators, 
with GRA standing for \emph{general relation algebra}.
To simplify notation, we also let $\mathrm{GRA}$
stand for $\mathrm{GRA}(e,p,s,I,\neg, J,\exists)$ in
the current article.
Furthermore, by $\mathrm{GRA}\setminus f_1,\dots , f_k$ we
refer to $\mathrm{GRA}$
with the operators $f_1,\dots, f_k
\in\{e,p,s,I,\neg, J,\exists\}$ removed.

We begin our study by proving that
\emph{$\mathrm{GRA}$ and $\mathrm{FO}$ are 
equiexpressive.} The next aim is to classify the decidability and 
complexity properties of the principal
subsystems of $\mathrm{GRA}$. Firstly, 
$\mathrm{GRA}\setminus \neg$ is trivially decidable, every term
being satisfiable.
Nevertheless, $\mathrm{GRA}\setminus \neg$ is interesting as we show it
can define precisely all conjunctive queries with equality.
Then we establish
that $\mathrm{GRA}\setminus \exists$ is \textsc{NP}-complete.
We then show that satisfiability of $\mathrm{GRA}\setminus J$
can be checked by a finite automaton, and futhermore, we 
prove $\mathrm{GRA}\setminus I$ to be \textsc{NP}-complete. 
We thereby identify new decidable low-complexity 
fragments of FO. Including the discovery of the algebraic systems, we 
identify, e.g., the $\mathrm{NP}$-complete fragment $\mathcal{F}$ of $\mathrm{FO}$
based on the restriction that when forming conjunctions 
$\varphi(x_1,\dots , x_m) \wedge \psi(y_1,\dots , y_n)$,
the sets $\{x_1,\dots , x_m\}$
and $\{y_1,\dots , y_n\}$ of variables should be disjoint.

On the negative side, we 
show that $\mathrm{GRA}(p,I,\neg,J,\exists)$ is $\Pi_1^0$-complete, so
removing either $e$ or $s$ (or both)
from $\mathrm{GRA}$ does \emph{not} lead to 
decidability. 
Thus we have the following close to complete
first classification: removing
any of the operators $\neg,\exists,I,J$ gives a
decidable system, while dropping $e$ or $s$ (or both)
keeps the system undecidable. The only open case
concerning the removal of a single operator is $\mathrm{GRA} \setminus p$.
We leave the study of the complexity and decidability of
subsystems of $\mathrm{GRA}$ there in this introductory article.

To push our program further, we define a general notion of a
\emph{relation operator} which essentially puts connectives and
(generalized) quantifiers under the same umbrella notion.
The definition can be seen as a slight generalization of
the notion of a generalized quantifier due to
Mostowski \cite{mostowski} and Lindstr\"{o}m \cite{lindstrom}. 
We then study 
variants of $\mathrm{GRA}$ with different sets of 
relation operators.

 In particular, we characterize
the guarded fragment, two-variable logic and 
fluted logic by different algebras. The
guarded fragment corresponds 
to the algebra $\mathrm{GRA}(e,p,s,\setminus,\Dot\cap,\exists)$
where the symbol $\setminus$ denotes the relative 
complementation operator and $\Dot{\cap}$ is a
\emph{suffix intersection} operator. The suffix intersection is an operator that is similar to 
standard intersection but makes sense also when intersecting relations of
different arities.
Two-variable
logic---over vocabularies with at most binary
relation symbols---corresponds to $\mathrm{GRA}(e,s,\neg,\Dot{\cap},\exists)$,
and fluted logic
turns out to be $\mathrm{GRA}(\neg,\Dot{\cap},\exists)$.
%
%
%

%
%
%
Note that the algebras for fluted logic and two-variable logic 
are clearly rather intimately related (note that we do not impose 
restrictions on relation symbol arities for fluted logic). 
Also, since the guarded fragment is characterized by $\mathrm{GRA}(e,p,s,\setminus,\Dot\cap,\exists)$,
and since $\mathrm{GRA}(e,s,\neg,\Dot\cap,\exists)$ 
and $\mathrm{GRA}(e,p,s,\neg,\Dot\cap,\exists)$ can be shown
equiexpressive over vocabularies with at most binary relations, we
observe that also two-variable logic and the
guarded fragment are very nicely and closely linked.
These kinds of results demonstrate the
explanatory power and potential usefulness of the
idea of comparing $\mathrm{FO}$-fragments \emph{under the
same umbrella framework} based on 
different kinds of \emph{finite signature algebras}. Indeed, \emph{each finite 
operator set specifies what the building blocks of the related 
logic are---precisely.}
In this article we give characterizations mainly to some of the main decidable 
fragments of FO, but a natural future research
direction involves also pushing these studies
beyond first-order logic by using the notion of
relation operator defined here. Also, finding different operator
sets that are expressively complete for FO is a relevant question.



The contributions of this article can be 
summarized as follows. \smallskip
\begin{enumerate}
    \item
    The main objective is to \emph{introduce the program} of systematically
    classifying the complexity and decidability properties of logics
    with \emph{different finite signature algebras}. We believe it is 
    useful to introduce the idea, as no previous analogous
    study on first-order fragments exists.
    %
    %
    %
    \item
    Concretely, we provide a comprehensive
    classification of the principal subsystems of 
    $\mathrm{GRA}(e,p,s,I,\neg, J, \exists)$ (which itself 
    characterizes 
    $\mathrm{FO}$).
    %
    In each solved case, we also pinpoint the complexity of the system.
    In the process, we identify interesting new low-complexity fragments of FO. 
    \item
    We find algebraic characterizations for $\mathrm{FO}$ and 
    some of its main decidable fragments such as
    $\mathrm{FO}^2$,
    the guarded fragment and fluted logic. (Furthermore, we additionally find algebras that
    characterize conjunctive queries, equality-free FO, quantifier-free FO
    and the set of first-order atoms.) We also provide a
    \textsc{2ExpTime}-completeness result for the algebra of for the guarded fragment GF.
    This turns out to require quite intricate proof techniques and new
    notions (e.g., the notion of a \emph{term guard}) to keep
    the translations between GF and the algebra polynomial. 
    Showing \textsc{NexpTime}-completeness of the
    algebra for $\mathrm{FO}^2$ turns out easier.
    \item
    We define a general notion of a relation operator.
    This relates also to
    further directions for our 
    future research summarized in the concluding section.
\end{enumerate}

As yet further work, we also
consider $\mathrm{GRA}(e,s,\backslash,\Dot{\cap},\exists)$
and show it \textsc{ExpTime}-complete. This algebra can be
seen as a new, decidable fragment of $\mathrm{FO}$. Furthermore, we
characterize the unary negation fragment by using a 
unary negation operator as well as suitable join operators.

\medskip

\noindent
\textbf{Further notes on related work.}
We already extensively surveyed the \emph{related work} 
concerning our program. We now
give further related information on
algebraic issues.

There are various algebraic approaches to $\mathrm{FO}$,
e.g., Tarski's \emph{cylindric algebras},
their semantic counterparts \emph{cylindric set algebras}
and the \emph{polyadic algebras} of Halmos.
The book \cite{hirsch} gives a
comprehensive and relatively recent
account of the these systems.
Also, variants of Codd's \emph{relational algebra} \cite{codd} are
important, although the main systems studied within
the related database-theory-oriented setting are
not equivalent to standard $\mathrm{FO}$ but instead
relate to domain independent first-order logic. 
The closest approach to our system is Quine's
\emph{predicate functor logic}
\cite{quine36}, \cite{quine60}, \cite{quine72}.
This system comes in several variants, with different sets of
operators used. But the spirit of the approach bears some similarity to
the main algebraic system we study in this article.
Variants of predicate functor logic can be naturally
considered to be within the scope of
our research program. Predicate functor logic has been studied 
very little, and we are not aware of
any work relating to complexity theory that 
has appeared before our current work.
The notable works within 
this framework include the complete axiomatizations given in
\cite{kuhn} and \cite{bacon}. Concerning further algebraic settings, Tarski's \emph{relation 
algebra} (see \cite{hirsch}) is also related to our work, but focuses on
binary relations.

This paper is an extended, full version of the conference
paper \cite{csl2023version}. 
The article \cite{newversion} is an early
version of the current article. The paper \cite{newversion} has already been followed-up by  \cite{ORDEREDFRAGMENTS}, where several natural extensions of so-called \emph{ordered logic}, fluted logic and $\mathrm{FO}^2$ were studied within the algebraic framework introduced below.  
The paper \cite{oldversion} is the first
version of \cite{newversion}, containing many of the results below, but using a slightly different set of
algebraic operators. The research program realized in the current article and its earlier versions was proposed in \cite{games19}, which also discusses the $\mathrm{FO}$-equivalence of the operators of \cite{oldversion} and suggests, for example, studying systems with limited permutations.

\section{Preliminaries}

Let $A$ be an arbitrary set.
As usual, a \textbf{$k$-tuple} over $A$ is an
element of $A^k$. When $k=0$, we let $\epsilon$
denote the unique \textbf{$0$-tuple} in $A^k = A^0$.
Note that $A^0 = B^0 = \varnothing^0 = \{\epsilon\}$
for all sets $A$ and $B$.
Note also that $\varnothing^k =\varnothing$ for all positive integers $k$.
If $k$ is a non-negative integer, then a $k$-ary
\textbf{AD-relation}
over a set $A$ is a pair $(R,k)$ where $R\subseteq A^k$ is a $k$-ary
relation in the usual sense, i.e., simply a set of $k$-tuples.
Here `AD' stands for  \emph{arity definite}.
We call $(\varnothing, k)$ the \textbf{empty $k$-ary AD-relation}.
For a non-negative integer $k$, we let 
$\top_k$ (respectively, $\bot_k$) denote an operator 
that maps any set $A$ to the AD-relation $\top_k(A) := (A^k, k)$
(respectively, $\bot_k(A) := (\varnothing,k)$).
We may write $\top_k^A$ for $\top_k(A)$ and simply $\top_0$
for $\top_0(A)$, and we 
may write $\bot_k^A$ or $\bot_k$ for $\bot_k(A)$.
We note that $\bot_k^\varnothing = \top_k^\varnothing$
iff $k \not= 0$. 
When $T = (R,k)$ is a $k$-ary AD-relation, we
let $\mathit{rel}(T)$ denote $R$ and 
write $\mathit{ar}(T) = k$ to refer to the arity of $T$.
If $T$ is an AD-relation, $t\in T$ 
\emph{always means that} $t\in \mathit{rel}(T)$.

The notion of a model is defined as usual in model theory, assuming
model domains are never empty. For simplicity, we restrict attention to
relational models, i.e., vocabularies of models do not contain
function or constant symbols.
We use the convention where the domain of a
model $\mathfrak{A}$ is
denoted by $A$, the domain of $\mathfrak{B}$ by $B$ et cetera.
We let $\hat{\tau}$ denote the \textbf{full relational vocabulary}
containing countably infinitely many relation symbols of
each arity $k\geq 0$.
We let $\mathrm{VAR} = \{v_1,v_2,\dots \}$ denote the countably 
infinite set of exactly all variables used in 
first-order logic $\mathrm{FO}$. We also use metavariables (e.g.,
$x,y,z,x_1,x_2\dots$) to refer to symbols in $\mathrm{VAR}$.
The syntax of $\mathrm{FO}$ is built in the usual way, 
starting from the set of atoms 
consisting of \textbf{equality atoms} (i.e., atoms 
with the equality symbol $=$)
and \textbf{relation atoms} $R(x_1,\dots , x_n)$ where $R\in \hat{\tau}$. When writing relation
atoms, we often write $Rx_1,\dots x_n$
instead of $R(x_1,\dots , x_n)$ to
simplify notation.
By an $\mathrm{FO}$-formula $\varphi(x_1,\dots, x_k)$ we 
refer to a formula whose free
variables are exactly $x_1,\dots, x_k$.
An $\mathrm{FO}$-formula $\varphi$ (without a list of
variables) may or may not have free variables. The set of
free variables of $\varphi$ is denoted by $Free(\varphi)$.

Now, suppose $(x_1,\dots , x_k)$ is a
tuple of pairwise distinct variables and consider a formula $\varphi(x_1,\dots , x_k)$.
Suppose, likewise, that also $(y_1,\dots , y_k)$ is a
tuple of pairwise distinct variables. Then we let $\varphi(y_1,\dots , y_k)$ denote
the formula obtained from $\varphi(x_1,\dots , x_k)$ by simultaneously replacing each free variable $x_i$ by $y_i$
for each $i\leq k$ (and avoiding variable capture in the process by renaming bounded variables suitably,
if necessary).

Let $k\geq 0$ and
consider an $\mathrm{FO}$-formula $\varphi(v_{i_1},\dots ,v_{i_k})$
where $i_1< \dots < i_k$.
The formula $\varphi(v_{i_1},\dots ,v_{i_k})$
\textbf{defines} the AD-relation $$\bigl(\{(a_1,\dots, a_k)\in A^k\ |\
\mathfrak{A}\models\varphi(a_1,\dots , a_k)\, \}, k\bigr)$$ in the model $\mathfrak{A}$. Notice that we make
crucial use of the linear ordering of the
subindices of the variables $v_{i_1},\dots , v_{i_k}$ in $\mathrm{VAR}$.
We let $\varphi^{\mathfrak{A}}$ denote
the AD-relation defined by $\varphi$ in $\mathfrak{A}$.
Notice---to give an example---that $\varphi(v_1,v_2,v_3)$
and $\varphi(v_6,v_8,v_9)$ define the same AD-relation
over any model. It is important to
recall this phenomenon below. 
When using the
six metavariables $x,y,z,u,v,w$, we henceforth
always assume $(x,y,z,u,v,w)  = (v_{i_1},v_{i_2},v_{i_3},
v_{i_4},v_{i_5}, v_{i_6})$
for some indices $i_1<i_2 < i_3 < i_4 < i_5 < i_6$.
Now, to clarify a further
technical issue, let us consider the
formulas $R(v_1,v_2)$ and $R(v_1,v_1)$. 
Observe that while $R(v_1,v_2)$
defines a binary AD-relation,
the second atom $R(v_1,v_1)$
defines a unary AD-relation since
the only free variable in it is $v_1$.
Consider
the formulas $\varphi := v_1\not=v_1$
and $\psi := v_1\not=v_1 \wedge v_2\not = v_2$.
Now $\varphi^{\mathfrak{A}}$ is the
empty unary AD-relation
and $\psi^{\mathfrak{A}}$ the
empty binary AD-relation. The 
negated formulas  $\neg\varphi$ and $\neg \psi$ then define
the universal unary
and binary AD-relations $(\neg\varphi)^{\mathfrak{A}} = (A,1)$
and $(\neg\psi)^{\mathfrak{A}} = (A\times A,2)$, respectively.
These are two different AD-relations, of course.
This demonstrates the intuition behind the choice to
consider AD-relations rather than ordinary
relations: if $\varphi$  and $\psi$ both defined
the ordinary empty relation $\varnothing$ in $\mathfrak{A}$, then the
action of $\neg$ in $\mathfrak{A}$ on the input $\varnothing$ would
appear ambiguous.
Ordinary relations suffice for most purposes of
studying $\mathrm{FO}$, but we need to be more careful in our detailed algebraic study.

A \textbf{conjunctive query} (CQ) is a
formula $\exists x_1\dots \exists x_k\, \psi$
where $\psi$ is a conjunction of relation atoms $R(y_1,\dots,y_n)$. 
For example 
$\exists y\exists z (Rxyz \wedge Syzuv)$ is a CQ with
the free variables $x,u,v$. 
Conjunctive queries are first
class citizens in database theory.
A \textbf{conjunctive query with
equality} (CQE) is like a CQ but also allows 
equality atoms in addition to relation atoms $R(y_1,\dots,y_n)$.
(We note that $x=y$, for example, is a CQE that defines the identity relation
undefinable by CQs.)

\section{An algebra for first-order logic}\label{FO-algebra}

In this section we define an
algebra equiexpressive with $\mathrm{FO}$.
To this end, consider the
algebraic signature
$\bigl(e,p,s,I,\neg,J,\exists\bigr)$ where
$e$ is an algebraically
nullary symbol (i.e., a constant), the symbols $p,s,I,\neg,\exists$
have arity one, and $J$ has arity two.
Let $\tau$ be a vocabulary, i.e., a set of 
relation symbols. The vocabulary $\tau$
defines a set of \textbf{terms} (or \textbf{$\tau$-terms}) 
built by starting from the 
the symbols $e$ and $R\in \tau$ and composing 
terms by using the symbols $p,s,I,\neg,J,\exists$ in the usual way. 
Thereby $e$ and each $R\in\tau$ are terms,
and if $\mathcal{T}$ and $\mathcal{T}'$ are terms,
then so are $p(\mathcal{T})$, $s(\mathcal{T})$, $I(\mathcal{T})$,
$\neg(\mathcal{T})$, $J(\mathcal{T}$,
$\mathcal{T}')$, $\exists(\mathcal{T})$. We often 
leave out brackets when using unary operators
and write, for example, $Ip R$ instead of $I(p(R))$.
Each term $\mathcal{T}$ is associated with an arity $\mathit{ar}(\mathcal{T})$
which, as we will see later on, equals the arity of the AD-relation that $\mathcal{T}$
defines on a model.
We define that $\mathit{ar}(R)$ is the arity of
the relation symbol $R$, and we define
$\mathit{ar}(e) = 2$;
$\mathit{ar}(p\mathcal{T}) = \mathit{ar}(\mathcal{T})$;
$\mathit{ar}(s\mathcal{T}) = \mathit{ar}(\mathcal{T})$;
%
%
$\mathit{ar}(\neg\mathcal{T}) = \mathit{ar}(\mathcal{T})$;
$\mathit{ar}(J(\mathcal{T},\mathcal{T}'))$
$= \mathit{ar}(\mathcal{T}) + \mathit{ar}(\mathcal{T}')$.
Finally, for $I$ and $\exists$, we define
and $\mathit{ar}(I\mathcal{T})
= \mathit{ar}(\exists\mathcal{T}) = \mathit{ar}(\mathcal{T}) - 1$ if
$\mathit{ar}(\mathcal{T})\geq 1$ and
$\mathit{ar}(I\mathcal{T}) = \mathit{ar}(\exists\mathcal{T}) = 0$ when $\mathit{ar}(\mathcal{T}) = 0$.

Given a model $\mathfrak{A}$ of vocabulary $\tau$,
each $\tau$-term $\mathcal{T}$ defines
an AD-relation $\mathcal{T}^{\mathfrak{A}}$ over $A$.
The arity of $\mathcal{T}^{\mathfrak{A}}$  
will indeed be equal to the arity of $\mathcal{T}$.
Consider terms $\mathcal{T}$ and $\mathcal{S}$ and assume we
have defined AD-relations $\mathcal{T}^{\mathfrak{A}}$
and $\mathcal{S}^{\mathfrak{A}}$.
Then the
below conditions hold.

\smallskip

\medskip

\begin{enumerate}
\item[$R$\, )] Let $R$ be a $k$-ary relation symbol in $\tau$, so $R$ is a
constant term in the algebra. We define

\medskip

$R^{\mathfrak{A}}\, = \, \bigl(\{ (a_1,\dots, a_k)\, |\, \mathfrak{A}\models R(a_1,\dots, a_k)\, \},\,
k\bigr).$

\medskip

\item[$e$\, )] We define $e^{\mathfrak{A}} =
\bigl(\{(a,a)|\, a\in A\},\,2\bigr).$ The constant $e$ is
called the \textbf{equality} constant.
\smallskip
\item[$p$\, )]
If $\mathit{ar}(\mathcal{T}) = k \geq 2$, we define

\medskip

\noindent
\scalebox{0.98}[1]{$(p(\mathcal{T}))^{\mathfrak{A}}
= \bigl(\{(a_{k} , a_1 , \dots , a_{k-1})\, |\,
(a_1,\dots , a_k)\in\mathcal{T}^\mathfrak{A}\, \}, k\bigr),$}

\medskip

where $(a_k, a_1 , \dots , a_{k-1})$ is the $k$-tuple 
obtained from the $k$-tuple
$(a_1,\dots , a_k)$ by 
moving the last element $a_k$ to
the beginning of the tuple.
If $\mathit{ar}(\mathcal{T})$ is $1$ or $0$, we
define $(p(\mathcal{T}))^\mathfrak{A}
= \mathcal{T}^{\mathfrak{A}}$. We call $p$ the \textbf{permutation} 
operator, or \textbf{cyclic permutation} operator.\smallskip
\item[$s$\, )]
If $\mathit{ar}(\mathcal{T}) = k \geq 2$, we define

\medskip

\noindent
\scalebox{0.89}[1]{$\bigl(\mathit{s}(\mathcal{T}))^{\mathfrak{A}}
= \bigl(\{(a_1, \dots , a_{k-2},a_{k},a_{k-1})\, |\, 
(a_1,\dots , a_k)\in\mathcal{T}^\mathfrak{A}\, \}, k \bigr),$}

\medskip

where $(a_1, \dots , a_{k-2},a_{k},a_{k-1})$ is the $k$-tuple
that is obtained from the $k$-tuple $(a_1,\dots , a_k)$ by
swapping the
last two elements $a_{k-1}$ and $a_{k}$ but keeping the 
other elements as they are.
If $\mathit{ar}(\mathcal{T})$ is $1$ or $0$, we
define $(s(\mathcal{T}))^\mathfrak{A}
= \mathcal{T}^{\mathfrak{A}}$.
We refer to $s$ as the \textbf{swap} operator.\smallskip
\item[$I$\, )]
If $\mathit{ar}(\mathcal{T}) = k\geq 2$,
we let $(\mathit{I}(\mathcal{T}))^{\mathfrak{A}}$ be
the AD-relation
%
%
%
%

\medskip

\smallskip

$\bigl(\{(a_1, \dots , a_{k-1})
\, |\, (a_1, \dots , a_{k-1}, a_k)\in\mathcal{T}^\mathfrak{A}\\
\text{ }\hspace{4.3cm}\text{ and }a_{k-1}=a_{k}\}, k-1 \bigr).$

\medskip

\smallskip

If $\mathit{ar}(\mathcal{T})$ is $1$ or $0$, we define $(\mathit{I}(\mathcal{T}))^\mathfrak{A}
= \mathcal{T}^{\mathfrak{A}}$. We
refer to $I$ as the \textbf{identification} operator, or \textbf{substitution} operator. Intuitively, it discards away all
tuples except for those where the two last elements are identical, and then it 
projects away the last one of the last two identical elements of the remaining tuples.\smallskip
\item[$\neg$\, )]
Let $\mathit{ar}(\mathcal{T}) = k$.
We define

\medskip

\scalebox{0.90}[1]{$(\neg(\mathcal{T}))^{\mathfrak{A}}
= \bigl(\{(a_1, \dots , a_{k})\, |\, (a_1,\dots , a_k)
\in A^k\setminus \mathit{rel}(
\mathcal{T}^\mathfrak{A})\, \},\, k\bigr).$}

\medskip

Note in particular
that if $\mathcal{T}^{\mathfrak{A}} = (\varnothing,0) = \bot_0^A$, 
then $( \neg(\mathcal{T}) )^{\mathfrak{A}}
= (\{\epsilon\},0) = \top_0^A$, and
vice versa, if 
$\mathcal{T}^{\mathfrak{A}} =  \top_0^A$, 
then $( \neg(\mathcal{T}) )^{\mathfrak{A}} = \bot_0^A$.
%
%
%
We call $\neg$ the
\textbf{complementation} operator.\smallskip
\item[$J$\, )]
Let $\mathit{ar}(\mathcal{T}) = k$
and $\mathit{ar}(\mathcal{S}) = \ell$. We define $(J(\mathcal{T},\mathcal{S}))^{\mathfrak{A}}$ to be
the AD-relation

\medskip

%
$
\bigl( \{(a_1,\dots , a_k, b_1 , \dots , b_{\ell})\, |\, (a_1,\dots , a_k)
\in \mathcal{T}^\mathfrak{A}\\
\text{ }\hspace{3.4cm}\text{ and }\, (b_1,\dots , b_{\ell})
\in \mathcal{S}^\mathfrak{A}\},k+\ell\bigr).
$
%
%

\medskip

Here we note that $\epsilon$ is interpreted as
the identity of
concatenation, so if $\mathit{rel}(\mathcal{T}^\mathfrak{A}) = \{\epsilon\}$,
then $(J(\mathcal{T},\mathcal{S}))^{\mathfrak{A}} = 
(J(\mathcal{S},\mathcal{T}))^{\mathfrak{A}} = \mathcal{S}^{\mathfrak{A}}$
and $(J(\mathcal{T},\mathcal{T}))^{\mathfrak{A}}
= (\{\epsilon\},0)$. We 
call $J$ the \textbf{join} operator.\smallskip
\item[$\exists$\, )]
If $\mathit{ar}(\mathcal{T})
= k \geq 1$, we let
%
%
%
%
$(\exists(\mathcal{T}))^{\mathfrak{A}}$ be the AD-relation

\medskip

$
\bigl(\{(a_1, \dots , a_{k-1})\, |\, (a_1, \dots , a_k)\in\mathcal{T}^\mathfrak{A}\\
\text{ }\hspace{4cm}\text{ for some }a_k\in A\, \},k-1\bigr)$

\medskip

%


%
where $(a_1, \dots , a_{k-1})$ is the $(k-1)$-tuple
obtained by removing (i.e., 
projecting away) the last element of $(a_1, \dots , a_k)$.
When $\mathit{ar}(\mathcal{T}) = 0$,
then $(\exists(\mathcal{T}))^{\mathfrak{A}} = \mathcal{T}^{\mathfrak{A}}$.
We call $\exists$ is the \textbf{existence} or
\textbf{projection} operator. 

\smallskip

\medskip

\end{enumerate}

We denote this
algebra by $\mathit{\mathrm{GRA}}(e,p,s,I,\neg,J,\exists)$
where $\mathit{\mathrm{GRA}}$ stands for \textbf{general relation
algebra}. A set $\{f_1,\dots, f_k\}$ of operators
defines the general
relation algebra $\mathit{\mathrm{GRA}}(f_1,\dots , f_k)$; we shall
define various such systems below.
In this paper---only to simplify notation---we 
write $\mathit{\mathrm{GRA}}$
for $\mathit{\mathrm{GRA}}(e,p,s,I,\neg,J,\exists)$.
We identify $\mathrm{GRA}(f_1,\dots, f_k)$ with the
set of $\hat{\tau}$-terms of this algebra, where $\hat{\tau}$ is
the full relational vocabulary.
On the logic side, we similarly identify FO with the set of $\hat{\tau}$-formulas.

Let $\mathcal{G}$ be some set of terms of
some general relation algebra $\mathrm{GRA}(f_1,\dots, f_k)$. Formally,
the satisfiability problem for $\mathcal{G}$
takes as input a term $\mathcal{T}\in\mathcal{G}$   
and returns `\emph{yes}' iff there exists a model $\mathfrak{A}$
such that $\mathcal{T}^{\mathfrak{A}}$ is not the 
empty AD-relation of arity $\mathit{ar}(\mathcal{T})$.

An $\mathrm{FO}$-formula $\varphi$ and term $\mathcal{T}$
are \textbf{equivalent}
if $\varphi^{\mathfrak{A}} = \mathcal{T}^{\mathfrak{A}}$
for every $\tau$-model $\mathfrak{A}$ (where $\tau$ is an
arbitrary vocabulary that is large enough so
that $\varphi$ is a $\tau$-formula
and $\mathcal{T}$ a $\tau$-term). For example, the formula $R(v_1,v_2)$ is
equivalent to $R$, while $R(v_2,v_1)\wedge (P(v_1)
\vee \neg P(v_1))$ is equivalent to $sR$. Note 
that under our definition, $R(v_3, v_6)$ and $R(v_1,v_2)$
are both equivalent to the term $R$ while the formulas are
not equivalent to each other.
This causes no ambiguities as long as
we use the terminology carefully.
Also, $R(v_1,v_2) \wedge v_3 = v_3$ is \emph{not} 
equivalent to the term $R$ as it defines a
ternary rather than a binary relation. Furthermore, recall that in our setting, 
the formula $T(v_1,v_1,v_2)$ defines a binary relation and $v_8=v_8$ a
unary relation.

In the investigations below, it is useful to remember how the use of the operator $p$ is
reflected to corresponding $\mathrm{FO}$-formulas:
if $\mathit{rel}\bigl(R^{\mathfrak{A}}\bigr)
= \{(a,b,c,d)\} = \mathit{rel}
\bigl((Rxyzu)^{\mathfrak{A}}\bigr)$,
then $\mathit{rel}\bigl((pR)^{\mathfrak{A}}\bigr)
= \{(d,a,b,c)\} = \mathit{rel}
\bigl((Ryzux)^{\mathfrak{A}}\bigr)$, so the
tuple $(a,b,c,d)$ has its last element 
moved to the beginning of the tuple, while the formula $Rxyzu$
has the first variable $x$ moved to the end of
the tuple of variables.
It is also useful to 
understand how the operator $I$ works.
Now, if $\mathit{rel}\bigl(R^{\mathfrak{A}}\bigr)
= \{(a,b,c,d)\} = \mathit{rel}
\bigl((Rxyzu)^{\mathfrak{A}}\bigr)$, then
$$
\mathit{rel}\bigl((IR)^{\mathfrak{A}}\bigr)=
\begin{cases}
\{(a,b,c,c)\} \text{ if } c=d,\\ 
\varnothing \text{ otherwise},
\end{cases}
$$
so clearly the term $IR$ is equivalent to $Rxyzz$
which is obtained
from $Rxyzu$ by the variable substitution
that replaces $u$ with $z$.

Let $S_1$ be a
set of terms of our algebra and $S_2$ a set of FO-formulas.
We call $S_1$ and $S_2$ 
\textbf{equiexpressive} if
each $\mathcal{T}\in S_1$ has an equivalent formula in $S_2$ and
each $\varphi\in S_2$ an equivalent term in $S_1$.
The sets $S_1$ and $S_2$ are called \textbf{sententially equiexpressive} if
each \emph{sentence} $\varphi\in S_2$ has an equivalent term in $S_1$
and each term $\mathcal{T}\in S_1$ of arity $0$ has an
equivalent sentence in $S_2$.

\begin{theorem}
$\mathrm{FO}$ and $\mathrm{GRA}$ are
equiexpressive.\label{equiexpressivitytheorem}
\end{theorem}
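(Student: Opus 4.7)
The plan is to prove the two inclusions separately, both by structural induction. The GRA-to-FO direction is mechanical, while the FO-to-GRA direction requires some auxiliary machinery for variable bookkeeping.

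For the GRA-to-FO direction, I would induct on term structure, associating to each $k$-ary term $\mathcal{T}$ an FO-formula $\varphi_{\mathcal{T}}$ whose free variables (in canonical increasing subscript order) list $k$ elements and that satisfies $\mathcal{T}^{\mathfrak{A}} = \varphi_{\mathcal{T}}^{\mathfrak{A}}$ on every model. The base cases are $R \rightsquigarrow R(v_1, \ldots, v_{\mathit{ar}(R)})$ and $e \rightsquigarrow v_1 = v_2$. Each operator then becomes a transparent syntactic move on the formula side: $p$ and $s$ act as permutations of the free-variable tuple (cyclic and transposition of the last two, respectively), $I$ substitutes the last free variable by the penultimate one, $\neg$ is negation, $J(\mathcal{T},\mathcal{S})$ becomes $\varphi_{\mathcal{T}} \wedge \varphi_{\mathcal{S}}$ after renaming the free variables of $\varphi_{\mathcal{S}}$ to a disjoint canonical block, and $\exists$ quantifies out the last free variable. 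The only delicate point is re-canonicalising the free-variable ordering after each step, but this is routine since $\varphi^{\mathfrak{A}}$ depends only on the sorted order of the subscript indices.

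For the FO-to-GRA direction, I would first establish two preliminary lemmas. The \emph{permutation lemma}: for every $k \geq 2$ and every $\sigma$ in the symmetric group $S_k$, there is a compound term $P_{\sigma}$ built from $p$ and $s$ whose action permutes the coordinates of any $k$-ary AD-relation according to $\sigma$; this follows from the classical fact that a $k$-cycle together with an adjacent transposition generate $S_k$. The \emph{padding lemma}: from any $k$-ary term $\mathcal{T}$ one can build, for any chosen coordinate position, a $(k+1)$-ary term interpreting to $\mathit{rel}(\mathcal{T}^{\mathfrak{A}})$ with an extra coordinate ranging over all of $A$; the construction uses $J(\mathcal{T}, \exists e)$ (noting that $\exists e$ defines the universal unary AD-relation $(A,1)$) followed by an appropriate permutation.

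The induction then proceeds as follows. A relation atom $R(y_1, \ldots, y_n)$ is handled by starting from the term $R$, iteratively bringing any pair of repeated coordinates to the last two positions via the permutation lemma, identifying them with $I$ (thereby lowering the arity), and finally permuting the surviving coordinates into canonical order. An equality atom $v_i = v_j$ with $i \neq j$ is just $e$, and a trivial equality $v_i = v_i$ becomes $\exists e$. Negation is handled by $\neg$. For a conjunction $\varphi \wedge \psi$, pad both $\mathcal{T}_{\varphi}$ and $\mathcal{T}_{\psi}$ to arity $k = |\Free(\varphi) \cup \Free(\psi)|$, aligning their coordinates to the canonical ordering of the union, form $J$ of the padded terms to obtain an arity-$2k$ term, and then identify the $i$-th coordinate with the $(k+i)$-th for each $i \leq k$ via the permutation lemma followed by $I$ (each identification dropping one coordinate). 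An existential quantification $\exists x\, \varphi$ is handled by permuting the coordinate corresponding to $x$ in $\mathcal{T}_{\varphi}$ to the last position and applying $\exists$.

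The main obstacle is the conjunction case: since $I$ only acts on the last two coordinates, we must carefully permute the $2k$-ary joined term repeatedly, performing $k$ successive identifications while tracking how the arities and coordinate positions change, and verifying that the final $k$-ary AD-relation is precisely the intersection of the extensions of $\varphi$ and $\psi$ over $\Free(\varphi) \cup \Free(\psi)$. The permutation and padding lemmas carry most of the weight; the remainder is routine bookkeeping.
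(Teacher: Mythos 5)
Your proposal is correct and follows essentially the same strategy as the paper: both directions go by structural induction, with the FO-to-GRA direction resting on the observation that $p$ and $s$ generate all coordinate permutations and that $I$ (after suitable permutation) performs variable identification, conjunction handled via $J$ followed by identifications, and quantification via permutation followed by $\exists$. The only cosmetic differences are that you pad both conjuncts to the full arity of $\Free(\varphi)\cup\Free(\psi)$ and identify all $k$ coordinate pairs, whereas the paper joins the unpadded terms and identifies only the shared variables, and that you render $v_i = v_i$ as $\exists e$ where the paper uses the equally valid $Ie$.
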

\begin{proof}
Let us find an equivalent term for an $\mathrm{FO}$-formula $\varphi$.
Consider first the cases 
where $\varphi$ is one of the equality atoms $x=x$, $x=y$.
Then the corresponding
terms are, respectively, $I e$ and $e$.

Assume then that $\varphi$ is $R(v_{i_1},\dots , v_{i_k})$
for $k\geq 0$. Suppose 
first that no
variable symbol 
gets repeated in the
tuple $(v_{i_1},\dots , v_{i_k})$ and that $i_1 < \dots < i_k$.
Then the term $R$ is equivalent to $\varphi$.
We then consider the cases where $(v_{i_1},\dots , v_{i_k})$
may have repetitions and the variables may not be
linearly ordered (i.e., $i_1 < \dots < i_k$ does not
necessarily hold). We first observe that we can
permute any relation in every possible way by
using the operators $p$ and $s$; for the sake of
completeness, we present here the following steps that
prove this claim:

\begin{itemize}
\item
Consider a tuple $(a_1,\dots, a_i,
\dots , a_{k})$ of the relation $R^{\mathfrak{A}}$ in a
model $\mathfrak{A}$. Now, we can move the
element $a_i$ an arbitrary 
number $n$ of steps to the left (while
keeping the rest of 
the tuple otherwise in the same order) by doing the following:
\begin{enumerate}
\item
Repeatedly apply $p$ to the
term $R$, making $a_i$ 
the rightmost element of the tuple.
\item
Apply then the \emph{composed}
function $p s$ (so $s$ 
first and then $p$) precisely $n$ times.
\item
Apply $p$ repeatedly to put
the tuple
into
the ultimate desired order.
\end{enumerate}
\item
Moving $a_i$ to the right is similar. Intuitively, we
keep moving $a_i$ to the \emph{left} and
continue even when it has gone past the 
leftmost element of the original tuple. Formally, we can move $a_i$ by $n$
steps to the right by performing the above three
steps so that in step $2$, we apply the composed 
function $ps$ exactly $k - n- 1$ times.
\end{itemize}
\noindent
This shows that we can move an arbitrary element anywhere in
the tuple, and thereby it is clear that with $p$ and $s$ we
can permute a relation in all possible ways.

Since we indeed can permute tuples without
restrictions, we can
also deal with the possible 
repetitions of
variables in $R(v_{i_1},\dots , v_{i_k})$.
Indeed, we can bring any two
elements to the right end of a
tuple and then use $I$. We discussed this
phenomenon already above, but for extra clarity, we
once more illustrate the issue by providing a
related, concrete example. So let us consider the
formula $R(v_1,v_2,v_1)$ (which defines a
\emph{binary} relation). We observe that
$R(v_1,v_2,v_1)$ is equivalent to 
the term $p I p p (R)$, so we first use $p$
twice to permute $R$, then we use $I$ to identify
coordinates, and
finally we use $p$ once more.
%
%
%

So, to sum up, we permute tuples by $p$ and $s$ and we
use $I$ for identifying variables.
Therefore, using $p,s,I$, we can find an
equivalent term for 
every quantifier-free 
formula $R(v_{i_1},\dots , v_{i_k})$.

Now suppose we have equivalent terms $\mathcal{S}$ 
and $\mathcal{T}$ for 
formulas $\varphi$ and $\psi$, respectively.
We will discuss how to
translate $\neg\varphi$, $\varphi\wedge\psi$
and $\exists   v_i    \varphi$. Firstly, clearly $\neg\varphi$
can be translated to $\neg\mathcal{S}$.
Translating $\varphi\wedge\psi$ is done in 
two steps. Suppose $\varphi$ and $\psi$ have,
respectively, the free variables $v_{i_1},\dots , v_{i_k}$
and $v_{j_1},\dots , v_{j_{\ell}}$. We first write the term $J(\mathcal{S},\mathcal{T})$
which is equivalent to $\chi(v_1,\dots,
v_{k+\ell}) := \varphi(v_{1},\dots , v_{k})
\wedge \psi(v_{k+1},\dots , v_{k+\ell})$; note here the
new lists of variables. We then deal
with the possible
overlap in the original
sets $\{v_{i_l},\dots , v_{i_k}\}$
and $\{v_{j_l},\dots , v_{j_\ell}\}$ of
variables of $\varphi$ and $\psi$. This is
done by repeatedly
applying $p$, $s$ and $I$ to $J(\mathcal{S},\mathcal{T})$ in the very
same way as used above when dealing with atomic formulas. 
Indeed, we above observed that we can arbitrarily permute
relations and identify variables by using $p,s,I$.

Finally, translating $\exists v_i \varphi$ is 
easy. We first repeatedly apply $p$ to the term $\mathcal{S}$
corresponding to $\varphi$ to bring the element to be
projected away to the right end of the tuple.
Then we use $\exists$. After this we again use $p$
repeatedly to put the term into the final wanted form.

Translating terms to
equivalent $\mathrm{FO}$-formulas is straightforward.
\end{proof}

We obtain the following characterization of atomic 
formulas as a corollary.

\begin{corollary}\label{atomcorollary}
$\mathrm{GRA}(p,s,I)$ is equiexpressive
with the set of relational FO-atoms, and $\mathrm{GRA}(e,p,s,I)$ is
equiexpressive with the set of FO-atoms. 
\end{corollary}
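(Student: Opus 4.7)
The plan is to prove each equiexpressivity claim in two directions. The $\Leftarrow$ direction --- expressing every atom by an equivalent term --- has in fact already been carried out inside the proof of Theorem \ref{equiexpressivitytheorem}: there, the equality atoms $x=x$ and $x=y$ are translated to $Ie$ and $e$, and every relational atom $R(v_{i_1},\dots,v_{i_k})$ is shown to admit an equivalent term built from the relation symbol $R$ using only $p$, $s$, and $I$. I would simply point to that construction.

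For the $\Rightarrow$ direction, the key observation is that $\mathrm{GRA}(p,s,I)$ lacks $\neg$, $J$, and $\exists$, so every term in it is of the form $f_n \cdots f_1 R$ for a single relation symbol $R$ and operators $f_i \in \{p,s,I\}$. Similarly, every $\mathrm{GRA}(e,p,s,I)$-term is either of that form or of the form $f_n \cdots f_1 e$. I would proceed by induction on $n$. The base cases are immediate: $R$ is equivalent to the atom $R(v_1,\dots,v_{\mathit{ar}(R)})$, and $e$ is equivalent to the equality atom $v_1 = v_2$. For the inductive step, supposing $\mathcal{T}$ is equivalent to an atom $\alpha$, I would show that $p\mathcal{T}$, $s\mathcal{T}$, and $I\mathcal{T}$ are each equivalent to an atom of the same basic kind (a relation atom if $\alpha$ is one, an equality atom if $\alpha$ is one). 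This is a bookkeeping argument on the variable list of $\alpha$: $p$ amounts to a cyclic rearrangement of the sorted distinct free variables of $\alpha$, $s$ to a swap of the last two of them, and $I$ to an identification of the last two (a substitution that reduces the number of distinct free variables by one). In the equality-constant case, one additionally notes that $Ie$ is equivalent to $v_1 = v_1$, so closure stays inside the class of equality atoms.

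The only real subtlety is to keep straight the distinction between the coordinates of an AD-relation and the sorted list of distinct free variables of an atom --- the same care that is already stressed in the discussion preceding Theorem \ref{equiexpressivitytheorem}. Once this convention is fixed, the effect of each operator on the underlying atom is entirely mechanical, and no new ideas beyond those already developed in the proof of that theorem are required.
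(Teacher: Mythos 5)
Your proposal is correct and follows essentially the same route as the paper: both directions cite the atom translations from the proof of Theorem \ref{equiexpressivitytheorem} for one inclusion, and for the converse both argue by induction on the (necessarily linear) term structure that $p$ and $s$ act as permutations of the variable list while $I$ substitutes the greatest-subindex variable by the second greatest. The extra care you take with the base cases and with the coordinate-versus-variable bookkeeping only makes explicit what the paper leaves implicit.
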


\begin{proof}
We observed in the proof of Theorem \ref{equiexpressivitytheorem} that
all relational FO-atoms can be expressed in terms of $p,s,I$. The converse fact that 
every term $\mathrm{GRA}(p,s,I)$ expresses a relational atom is justified by
the following observations. Firstly, $p$ and $s$ just permute relational tuples.  
And secondly, $I$ has the following effect: if $\mathcal{T}$ and $R(v_{i_1},\dots , v_{i_k})$ are
equivalent, then so are $I(\mathcal{T})$
and $R(v_{i_1}',\dots , v_{i_k}')$, where $(v_{i_1}',\dots , v_{i_k}')$ is obtained
from $(v_{i_1},\dots , v_{i_k})$ by replacing the occurrences of the
variable with the greatest subindex $i_j$ by the variable with the
second greatest subindex.
The claim for $\mathrm{GRA}(e,p,s,I)$ is now also clear, recalling, in particular,
that $x= x$ and $x= y$ are equivalent to the terms $I e$ and $e$, respectively.
\end{proof}

The $\mathrm{FO}$-equivalent algebra $\mathrm{GRA} = 
\mathrm{GRA}(e,p,s,I,\neg,J,\exists)$ is only one of many
interesting related systems. Defining alternative algebras 
equiexpressive with $\mathrm{FO}$ is surely a relevant option, but it is 
also interesting to consider
weaker, stronger and orthogonal systems.
We next give a general definition that enables
classifying all such algebras in a systematic way.
In the definition, $\mathrm{AD}_A$ is
the set of all $\mathrm{AD}$-relations (of
every arity) over $A$.
If $T_1,\dots , T_k$
are AD-relations over a set $A$, then $(A,T_1,\dots , T_k)$ is
called an \textbf{AD-structure}. A bijection $g:A\rightarrow B$ is an
isomorphism between AD-structures $(A,T_1,\dots , T_k)$ and $(B,S_1,\dots , S_k)$
if $\mathit{ar}(T_i) = \mathit{ar}(S_i)$ for each $i$ 
and $g$ is an ordinary isomorphism
between $(A,\mathit{rel}(T_1),\dots , \mathit{rel}(T_k))$
and  $(B,\mathit{rel}(S_1),\dots , \mathit{rel}(S_k))$.

\begin{definition}\label{relationoperatordefinition}
A $k$-ary \textbf{relation operator} $f$ is a map that outputs, 
given an arbitrary set $A$, a $k$-ary
function $f^A : (\mathrm{AD}_A)^k \rightarrow \mathrm{AD}_A$. 
The operator $f$ is isomorphism
invariant in the sense that if the $\mathrm{AD}$-structures
$(A,T_1,\dots , T_k)$ and $(B,S_1,\dots , S_k)$ are
isomorphic via $g:A\rightarrow B$,
then $(A, f^A(T_1,\dots , T_k))$ and $(B, f^B(S_1,\dots , S_k))$ 
are, likewise, isomorphic via $g$.
\end{definition}
%
%
%
%
%
%
An \textbf{arity-regular relation operator} is a relation operator with
the property that the arity of the output AD-relation depends only on
the sequence of arities of the input AD-relations.

To illustrate the notion of a relation operator, let us consider some
concrete examples.
Suppose $\mathcal{T}$ and $\mathcal{S}$ are
both of arity $k$. We define

\begin{itemize}
\item
$(\mathcal{T} \cup \mathcal{S})^\mathfrak{A}
= (\mathit{rel}(\mathcal{T}^\mathfrak{A})
\cup \mathit{rel}(\mathcal{S}^\mathfrak{A}), k),$ 
\item
$(\mathcal{T} \cap \mathcal{S})^\mathfrak{A}
= (\mathit{rel}(\mathcal{T}^\mathfrak{A})
\cap \mathit{rel}(\mathcal{S}^\mathfrak{A}), k),$
\item
$(\mathcal{T} \setminus \mathcal{S})^\mathfrak{A}\, 
=\, (\mathit{rel}(\mathcal{T}^\mathfrak{A})
\setminus \mathit{rel}(\mathcal{S}^\mathfrak{A}), k),$
\end{itemize}
and if $\mathcal{T}$ and $\mathcal{S}$ have 
different arities, then $\cap$ and $\cup$
return $(\varnothing,0)$ and $\setminus$
returns $\mathcal{T}^{\mathfrak{A}}$.
Suppose then that $\mathcal{T}$ and $\mathcal{S}$ 
have arities $k$ and $\ell$, respectively.
%
Calling $m:= \mathit{max}\{k,\ell\}$, we let

\begin{align*}
(\mathcal{T}\, \Dot{\cap}\,
\mathcal{S})^\mathfrak{A}
&= \bigl(\{(a_1,\dots ,a_m)
\mid (a_{m-k+1},\dots ,a_m) \in \mathcal{T}^\mathfrak{A}\\
&\text{ }\hspace{1.7cm}\text{ and } (a_{m-\ell + 1},\dots ,a_{m}) \in
\mathcal{S}^\mathfrak{A}\},\, m \bigr),
\end{align*}

\smallskip

\noindent
so intuitively, the tuples overlap on some
suffix of $(a_1,\dots , a_m)$; note here 
that when $k$ or $\ell$ is zero,
then $(a_{m+1},a_m)$ denotes the empty tuple $\epsilon$.
Now for example the formula $R(x,y) \land P(y)$ is
equivalent to $R\, \Dot{\cap}\, P$
and the formula $R(x,y) \land P(x)$ to $s(sR\ \Dot{\cap}\, P)$.
We call $\Dot\cap$ the $\textbf{suffix intersection}$.


In the next section we prove that the guarded 
fragment $\mathrm{GF}$ is sententially equivalent to $\mathrm{GRA}(e,p,s,\setminus,\Dot{\cap},\exists)$.
We note that in \cite{semijoinrelationalgebra,
hirsch}, the authors define
Codd-style relational algebras \emph{with  inherently infinite signatures}, and they then
prove the algebras to be sententially equiexpressive with $\mathrm{GF}$.
The system in \cite{semijoinrelationalgebra} uses, e.g., a \emph{semijoin operator}, which is
essentially a join operation but employs also a conjunction of identity atoms as part of the input to it.
The algebra of \cite{hirsch} employs, e.g., a ternary join operator where one of the inputs essentially
acts as a guard. Both algebras have an implicit 
access to variables via the infinite signatures in
the usual way of Codd-style systems.
%

Importantly, the proofs of the characterizations in \cite{semijoinrelationalgebra, hirsch} 
differ considerably from our corresponding argument, the translations
from algebra to logic being
inherently exponential in 
\cite{semijoinrelationalgebra}
and \cite{hirsch}.
We carefully develop techniques that allow us to give a polynomial translation from $\mathrm{GRA}(e,p,s,\backslash,\Dot{\cap},\exists)$ to $\mathrm{GF}$, which in turn
%
allow us to prove a \textsc{2ExpTime}
upper bound for the satisfiability
problem of the algebra $\mathrm{GRA}(e,p,s,\backslash,\Dot{\cap},\exists)$, the same as
that for $\mathrm{GF}$. Since we 
will also give a polynomial translation
from $\mathrm{GF}$ to $\mathrm{GRA}(e,p,s,\backslash,\Dot{\cap},\exists)$, it
follows that the satisfiability problem for the
algebra is \textsc{2ExpTime}-complete.
Furthermore, the algebra $\mathrm{GRA}(e,p,s,\backslash,\Dot{\cap},\exists)$ is a
genuinely variable-free system that indeed has a finite signature and a simple set of operators.

In addition to the guarded fragment, we
can also prove the following characterization for two-variable logic.

\begin{theorem}\label{fotwocharacterization}
$\mathrm{FO}^2$ and 
$\mathrm{GRA}(e,s,\neg,\Dot\cap,\exists)$
are sententially equiexpressive over vocabularies with at
most binary relations.
\end{theorem}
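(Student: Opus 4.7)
The plan is to prove the two inclusions by induction, using the essential observation that over vocabularies with at most binary relations, every term of $\mathrm{GRA}(e,s,\neg,\Dot\cap,\exists)$ has arity at most $2$. Indeed, all atomic ingredients ($R\in\tau$ and $e$) have arity at most $2$, and none of the operators in the signature can push arity above this bound: $\neg$ and $s$ preserve arity, $\exists$ decreases it, $\Dot\cap$ takes the maximum of the two input arities, and $e$ is itself binary. Dually, every $\mathrm{FO}^2$ formula has at most two free variables, and these will correspond to the at most two coordinates of the AD-relation that its translated term defines.

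For the direction from $\mathrm{FO}^2$ to the algebra, I translate formulas by induction, maintaining the convention that a formula whose free variables are listed in the fixed subindex order corresponds to a term whose $k$-ary tuple has coordinate $i$ interpreted as the $i$-th such free variable. Atoms are handled case by case using the equivalences $R(v_1,v_2)\leftrightarrow R$, $R(v_2,v_1)\leftrightarrow sR$, $v_1=v_2\leftrightarrow e$, $R(v_1,v_1)\leftrightarrow \exists(R\,\Dot\cap\,e)$, $P(v_i)\leftrightarrow P$, and $v_i=v_i\leftrightarrow \exists e$. Negation is translated directly by $\neg$, and disjunction is reduced to conjunction via De Morgan. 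For conjunction, the suffix intersection $\Dot\cap$ gives what is needed when the two conjuncts share the same trailing variable; otherwise I first apply $s$ to shuffle coordinates and, when necessary, promote a unary translation to a binary one by intersecting with the binary top relation $\neg(e\,\Dot\cap\,\neg e)$. Existential quantification $\exists v_2\,\varphi(v_1,v_2)$ becomes $\exists\mathcal{T}$, while $\exists v_1\,\varphi(v_1,v_2)$ becomes $\exists s\mathcal{T}$.

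For the reverse direction, I translate terms to $\mathrm{FO}^2$ formulas by induction on term structure, using the same coordinate-to-variable convention. An atomic term $R$ of arity $k\leq 2$ is translated to $R(v_1,\dots,v_k)$, and $e$ to $v_1=v_2$. The operator $\neg$ becomes negation; $s$ swaps the roles of the two free variables of its argument (and is a no-op on lower arities); $\exists$ existentially quantifies the variable occupying the last coordinate; and $\Dot\cap$ becomes conjunction after aligning the two translated formulas on their common trailing variable, which, given the suffix convention and the arity bound of $2$, amounts to at most one substitution among $\{v_1,v_2\}$, together with harmless renaming of bound variables. Since arity stays bounded by $2$, the resulting formula always uses at most $v_1,v_2$ and hence lies in $\mathrm{FO}^2$.

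The only real obstacle, and the main delicate point, is the bookkeeping of which free variable occupies which coordinate, especially around $\Dot\cap$ of a unary and a binary term (where the unary one constrains only the last coordinate) and around $\exists$ composed with $s$ (which controls which of the two variables gets quantified). Once a coherent convention is fixed, both translations reduce to routine inductive manipulations. The restriction to vocabularies of arity at most $2$ is essential: it is precisely what forces all terms to stay at arity $\leq 2$ and thereby compensates for the absence of $p$, $I$, and $J$ from the signature.
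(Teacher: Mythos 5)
Your proposal is correct, and on the one genuinely delicate point it takes a different route from the paper. Both arguments share the same skeleton: the easy direction uses the observation that all terms stay at arity $\leq 2$, and the hard direction is a compositional induction with the same atom clauses ($R\mapsto R$, $R(y,x)\mapsto sR$, $R(v,v)\mapsto\exists(R\,\Dot{\cap}\,e)$, etc.) and the same $s$/$\Dot{\cap}$ bookkeeping for quantifiers and for conjunctions with overlapping free variables. The divergence is in how conjunctions $\psi(x)\wedge\chi(y)$ with disjoint nonempty free-variable sets are handled, which is exactly where the missing join operator $J$ bites. The paper \emph{avoids} this case altogether: it exploits the hypothesis that $\varphi$ is a sentence to preprocess it (putting quantifier-free parts in disjunctive normal form and distributing $\exists$ over $\vee$ and then into conjunctions) so that no such subformula survives, after which the compositional translation never meets the problematic case. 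You instead \emph{handle} the case directly, by observing that the binary universal relation $\top_2$ is definable as $\neg(e\,\Dot{\cap}\,\neg e)$ and then padding each unary conjunct to a binary term via $\top_2\,\Dot{\cap}\,(\cdot)$ and a swap before intersecting; a quick check confirms this yields exactly the AD-relation of $\psi(x)\wedge\chi(y)$. Your route buys two things the paper's does not: it never uses sentencehood, so it actually establishes full (not merely sentential) equiexpressivity, and it is compositional and polynomial, whereas the paper's DNF-based normalization is exponential --- which is precisely why the paper must later reroute the \textsc{NexpTime} lower bound for $\mathrm{GRA}_2(e,s,\neg,\Dot{\cap},\exists)$ through fluted logic rather than through this theorem. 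The only items to tidy up are trivial: vacuous quantification $\exists v\,\varphi$ with $v\notin\Free(\varphi)$, and conjuncts that are sentences (arity-$0$ terms), both of which $\Dot{\cap}$ and $\exists$ already handle by definition.
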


\begin{proof}

The algebra $\mathrm{GRA}(e,s,\neg,\Dot\cap,\exists)$ with at most binary 
relation symbols clearly contains
only terms of arity at most two. Thus it is easy to translate the terms into $\mathrm{FO}^2$.
%

We then consider the converse translation. 
We assume that $\mathrm{FO}^2$ is
built using $\neg,\land$ and $\exists$ and treat other
connectives and $\forall$ as abbreviations in the usual way.

Now, let $\varphi \in \mathrm{FO}^2$
be a \emph{sentence} with at
most binary relations, and let $x$ and $y$ be the two variables that occur in $\varphi$. Note indeed that $\varphi$ is a
sentence, not an open formula.
We first convert $\varphi$ into a
sentence that does not contain any
subformulas of type $\psi(x) \wedge \chi(y)$ (or of
type $\psi(x) \vee \chi(y)$) as follows. 
Consider any subformula $\exists x\, \eta(x,y)$ where $\eta(x,y)$ is
quantifier-free. Put $\eta$ into disjunctive normal form and
distribute $\exists x$ over the disjunctions. Then
distribute $\exists x$ also over the 
over conjunctions as follows. Consider a
conjunction $\alpha_i(x,y) \wedge \beta_i(y) \wedge \gamma_i$
where each of $\alpha_i$, $\beta_i$, $\gamma_i$ are
conjunctions of literals;
the formula $\gamma_i$ contains the nullary relation symbols and
$\alpha_i(x,y)$ contains the literals of 
type $\pi(x,y)$ and $\pi'(x)$.
We distribute $\exists x$ into $\alpha_i(x,y) \wedge \beta_i(y) \wedge \gamma_i$ so 
that we obtain the formula $\exists x \alpha_i(x,y)
\wedge \beta_i(y) \wedge \gamma_i$.
Thereby the formula $\exists x\, \eta(x,y)$ gets
modified into the formula
$$\bigvee_{i=1}^{n} (\exists x \alpha_i(x,y)
\wedge \beta_i(y) \wedge \gamma_i)$$ which is of 
the right form and does not have $x$ as a 
free variable.
Next we can repeat this process for other existential
quantifiers in the formula
(by treating the subformulas with one free variable in the
way that atoms with one free variable were treated in
the translation step for $\eta(x,y)$ described above).
Having started from the \emph{sentence} $\varphi$, we
ultimately get a sentence that does not have
subformulas of the form $\psi(x) \wedge \chi(y)$ or
of the form $\psi(x) \vee \chi(y)$ but is nevertheless
equivalent to $\varphi$.

Next we translate an arbitrary 
sentence $\varphi \in \mathrm{FO}^2$ that
satisfies the above condition to an equivalent term.
We let $v\in\{x,y\}$ denote a generic variable.

Atoms of the form $P(v)$ (respectively $v=v$) 
translate to $P$ (respectively $\exists e$).
Relation symbols of arity $0$ translate to 
themselves and

\begin{enumerate}
    \item $R(x,y)$ translates to $R$,
    \item $R(y,x)$ translates to $sR$,
    \item $R(v,v)$ translates to $\exists (R\, \Dot{\cap}\, e)$,
    \item $x = y$ and $y=x$ translate to the term $e$.
\end{enumerate}

Now suppose we have
translated $\psi$ to $\mathcal{T}$.
Then $\neg \psi$
translates to $\neg \mathcal{T}$.  If $\psi$ has
one free variable $v$, then $\exists v \psi$
translates to $\exists\mathcal{T}$. If $\psi$
has two free variables, then we
either translate $\exists v \psi$
to $\exists\mathcal{T}$ when $v$ is $y$
and to $\exists s\mathcal{T}$ when $v$ is $x$.

Consider now a formula $\psi \wedge \chi$ and suppose that we have translated $\psi$ to $\mathcal{T}$
and $\chi$ to $\mathcal{S}$. If at 
least one of $\psi$ and $\chi$ is a 
sentence, we
translate $\psi \wedge \chi$ to $(\mathcal{T}\, 
\Dot{\cap}\, \mathcal{S})$.
Otherwise, due to the form of the
sentence $\varphi$ to be translated, we 
have $$\Free(\psi) \cap \Free(\chi) \neq \varnothing.$$
Now $\psi(x,y) \wedge \chi(x,y)$, $\psi(y) \wedge \chi(x,y)$ and $\psi(x,y) \wedge \chi(y)$ are all translated to $\mathcal{T}\, \Dot{\cap}\, \mathcal{S}$, 
while $\psi(x,y)\wedge \chi(x)$ and $\psi(x) \wedge \chi(x,y)$ are translated to $s(s\mathcal{T}\, \Dot{\cap}\, \mathcal{S})$ and $s(\mathcal{T}\, \Dot{\cap}\, s\mathcal{S})$, respectively. 
\end{proof}

We note that limiting our algebraic characterizations of $\mathrm{GF}$ 
and $\mathrm{FO}^2$ to \emph{sentential}
equiexpressivity is a 
choice based on the relative elegance of the results. We shall give related characterizations
without the limitation in the full version of the paper.
We note, however, that 
sentential equiexpressivity suffices for the almost all
practical scenarios.

Now, let $\mathrm{GRA}_2(e,s,\neg,\Dot{\cap},\exists)$ denote the
terms of $\mathrm{GRA}(e,s,\neg,\Dot{\cap},\exists)$ that 
use at most binary relation symbols; there are 
no restrictions on term arity, although it is easy to 
see that at most binary terms arise.
The proof of Theorem \ref{fotwocharacterization}
gives a translation from $\mathrm{FO}^2$-sentences (with at 
most binary symbols) 
to $\mathrm{GRA}_2(e,s,\neg,\Dot{\cap},\exists)$. However,
that translation is not polynomial, and thus it is not
immediately clear if we get a \textsc{NexpTime} lower bound for the
the satisfiability
problem of the system $\mathrm{GRA}_2(e,s,\neg,\Dot{\cap},\exists)$.
Nevertheless, we can prove the following Theorem. 

\begin{theorem}
The satisfiability 
problem of\ \ $\mathrm{GRA}_2(e,s,\neg,\Dot{\cap},\exists)$\ is \textsc{NexpTime}-complete.
\end{theorem}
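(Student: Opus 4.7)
I would establish \textsc{NexpTime}-completeness by giving polynomial-size translations in both directions between $\mathrm{GRA}_2(e,s,\neg,\Dot\cap,\exists)$ and $\mathrm{FO}^2$ over vocabularies with at most binary relation symbols. The upper bound then follows from the standard \textsc{NexpTime} upper bound for $\mathrm{FO}^2$-satisfiability, and the lower bound from its \textsc{NexpTime}-hardness.

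\textbf{Upper bound.} I would give a linear-size translation from $\mathrm{GRA}_2(e,s,\neg,\Dot\cap,\exists)$ to $\mathrm{FO}^2$ by structural induction on terms, with the invariant that a term of arity $k\in\{0,1,2\}$ produces a formula with $k$ designated free variables: arity $0$ gives a sentence, arity $1$ a formula $\psi(x)$, and arity $2$ a formula $\psi(x,y)$ where $x$ is coordinate $1$ and $y$ coordinate $2$. The base cases are $e\mapsto (x=y)$ and $R$ going to $R(x,y)$, $R(x)$, or $R$ according to arity. For $\neg$ I simply negate. For $\exists$ on a binary $\psi(x,y)$ I produce $\exists y\,\psi(x,y)$, and on a unary $\psi(x)$ I produce $\exists x\,\psi(x)$. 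For $s$ applied to an arity-$2$ term, I uniformly swap the symbols $x$ and $y$ throughout the formula, at both free and bound occurrences; for arities $0$ and $1$ the definition of $s$ is the identity, so I leave the formula unchanged. For $\Dot\cap$, equal-arity inputs become a conjunction; for mismatched arities I apply the uniform $x\leftrightarrow y$ swap to move the smaller-arity formula's free variable into the correct suffix position before conjoining (for example, a binary $\psi(x,y)$ suffix-intersected with a unary $\chi(x)$ yields $\psi(x,y)\wedge\chi[y/x]$, and a nullary sentence is simply conjoined). Each operator increases formula size by $O(1)$.

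\textbf{Lower bound.} I would reduce from $\mathrm{FO}^2$-sat on at-most-binary vocabularies. First, put the input sentence into Scott normal form $\forall x\forall y\,\alpha(x,y)\,\wedge\,\bigwedge_{i=1}^n \forall x\exists y\,\beta_i(x,y)$ in polynomial time by the standard subformula-naming construction. Then I translate each quantifier-free body $\gamma\in\{\alpha,\beta_1,\ldots,\beta_n\}$ into a binary term $T_\gamma$ computing $\{(a,b) : \gamma(a,b)\}$. The universal binary term $\top_2 := \neg(e\,\Dot\cap\,\neg e)$ is expressible inside the algebra, and with it I encode the relevant atoms as binary terms: $R(x,y)\mapsto R$; $R(y,x)\mapsto sR$; $x=y\mapsto e$; $x=x\mapsto \top_2$; nullary $R\mapsto \top_2\,\Dot\cap\, R$; $P(y)\mapsto \top_2\,\Dot\cap\, P$; $P(x)\mapsto s(\top_2\,\Dot\cap\, P)$; $R(y,y)\mapsto \top_2\,\Dot\cap\,\exists(R\,\Dot\cap\, e)$; $R(x,x)\mapsto s(\top_2\,\Dot\cap\,\exists(R\,\Dot\cap\, e))$. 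Boolean combinations use $\neg$ and $\Dot\cap$. The two Scott-NF shapes translate as $\forall x\forall y\,\alpha \mapsto \neg\exists\exists\neg T_\alpha$ and $\forall x\exists y\,\beta_i \mapsto \neg\exists\neg\exists T_{\beta_i}$, and the resulting arity-$0$ terms are combined via $\Dot\cap$, which at arity $0$ behaves as conjunction. The entire construction is polynomial.

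\textbf{Main obstacles.} In the upper bound, the subtle point is handling $s$: a uniform syntactic $x\leftrightarrow y$ renaming (including bound occurrences) is semantically correct in $\mathrm{FO}^2$ and linear-size, but one must verify by induction that it commutes appropriately with the other translation clauses and respects capture. In the lower bound, the crux is encoding atoms that effectively use only a single coordinate (such as $P(x)$, $R(x,x)$, or nullary atoms) as binary terms; here the gadget $\top_2 = \neg(e\,\Dot\cap\,\neg e)$ together with suffix intersection and a strategically placed $s$ is essential, since we lack a direct cylindrification operator and have only $e,s,\neg,\Dot\cap,\exists$ at our disposal. Verifying correctness of these encodings and of the Scott-NF step is the main technical content.
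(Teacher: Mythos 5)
Your proof is correct, and while your upper bound coincides with the paper's (a polynomial translation into $\mathrm{FO}^2$), your lower bound takes a genuinely different route. The paper deliberately sidesteps $\mathrm{FO}^2$ for the hardness direction—because its sentential translation from $\mathrm{FO}^2$ into the algebra (Theorem \ref{fotwocharacterization}) passes through a DNF conversion and is therefore exponential—and instead reduces from the \emph{two-variable fragment of fluted logic}, which translates into $\mathrm{GRA}_2(e,s,\neg,\Dot{\cap},\exists)$ in polynomial time via the appendix construction and is \textsc{NexpTime}-hard by \cite{flutedlidiatendera}. You instead repair the $\mathrm{FO}^2$ reduction directly: Scott normal form confines all quantification to the prefix, so the only place where disjoint-variable conjunctions such as $P(x)\wedge Q(y)$ arise is inside quantifier-free bodies, and there your cylindrification gadget $\top_2=\neg(e\,\Dot{\cap}\,\neg e)$ together with $\Dot{\cap}$ and a well-placed $s$ lifts every one- and zero-variable atom to a binary term, so plain intersection simulates conjunction without any normal-form blowup. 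I checked the individual encodings ($\top_2\,\Dot{\cap}\,P$, $s(\top_2\,\Dot{\cap}\,P)$, $\top_2\,\Dot{\cap}\,\exists(R\,\Dot{\cap}\,e)$, and the quantifier patterns $\neg\exists\exists\neg$ and $\neg\exists\neg\exists$) and they are all semantically correct under the paper's definitions of $s$, $\Dot{\cap}$ and $\exists$. What each approach buys: the paper's argument reuses machinery it needs anyway (the fluted-logic algebra of the appendix) and outsources hardness to an external result; yours is self-contained modulo the standard Scott normal form and \textsc{NexpTime}-hardness of $\mathrm{FO}^2$, and as a by-product it shows that full $\mathrm{FO}^2$ over binary vocabularies admits a polynomial satisfiability-preserving reduction into the algebra, which nicely complements the paper's (non-polynomial) equiexpressivity theorem.
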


\begin{proof}
The upper bound follows from the fact that $\mathrm{GRA}_2(e,s,\neg,\Dot{\cap},\exists)$ 
translates easily into $\mathrm{FO}^2$ in polynomial time
and $\mathrm{FO}^2$ is
well known to have a \textsc{NexpTime}-complete 
satisfiability problem.

We then consider the lower bound. For this, we use fluted logic (FL),
defined in the appendix (Definition \ref{fluteddefinition}).
The proof of Proposition \ref{flutedcharacterizationappendix} in the appendix shows 
that the \emph{two-variable fragment of fluted logic} over
vocabularies with at most binary 
relation symbols translates in polynomial
time into $\mathrm{GRA}_2(e,s,\neg,\Dot{\cap},\exists)$.
In \cite{flutedlidiatendera}, it is established that
the two-variable fragment of fluted logic is \textsc{NexpTime}-complete,
and at most binary relations suffice for the lower bound there.
%
%
%
%
\end{proof}
%

%

%
%

We then briefly consider fluted logic ($\mathrm{FL}$),
mentioned in the above proof.
The logic $\mathrm{FL}$ is a decidable fragment of $\mathrm{FO}$ that has recently received increased
attention in the research on first-order fragments. Definition \ref{fluteddefinition} in
the appendix gives a formal definition of the system. For the history of $\mathrm{FL}$, we
recommend the introduction of the article \cite{flutedlidiatendera}.

Now, it is straightforward to show (see Proposition \ref{flutedcharacterizationappendix} in
the appendix)
that fluted logic is
equiexpressive with $\mathrm{GRA}(\neg,\Dot\cap,\exists)$.
By comparing the algebraic characterizations, we
observe $\mathrm{FO}^2$ and fluted logic are very
interestingly and intimately related, and the full system
$\mathrm{GRA}(e,s,\neg,\Dot\cap,\exists)$ obviously
contains both fluted logic and $\mathrm{FO}^2$. Note also 
the close relationship of these systems to 
the algebra $$\mathrm{GRA}(e,p,s,
\setminus,\Dot\cap,\exists)$$ for GF.
These connections clearly demonstrate how the
algebraic approach can 
elucidate the relationships between seemingly different
kinds of fragments of FO. Indeed, $\mathrm{FO}^2$, FL 
and GF seem much more closely related than one might 
first suspect. By the algebraic characterizations, the 
three logics become associated with three simple 
finite collections of operators (these being 
the corresponding algebraic signatures),
and there are nice elucidating links between the collections.

The logic $\mathrm{UNFO}$ is a well-established decidable fragment of $\mathrm{FO}$ that enjoys many of the desirable properties that modal logics have \cite{segoufin}. Roughly speaking, its syntax is obtained from that of $\mathrm{FO}$ by restricting the use of negation only to formulas that have at most one free variable. To characterize $\mathrm{UNFO}$, we will need to introduce two additional relation operators. Suppose that $\mathcal{T}$ and $\mathcal{S}$ are terms of arity $k$ AND $\ell$ respectively. We define $(\overline{J}(\mathcal{T},\mathcal{S}))^\mathfrak{A}$ to be equal to 

\medskip

$(\{(a_1,\dots,a_k,b_1,\dots,b_\ell) \mid (a_1,\dots,a_k) \in \mathcal{T}^\mathfrak{A} \text{ or } (b_1,\dots,b_\ell) \in \mathcal{S}^\mathfrak{A}\}, k + \ell).$

\medskip

\noindent
Thus $\overline{J}$ is the \emph{dual} of $J$. If $k\leq 1$, we define $(\neg_1(\mathcal{T}))^\mathfrak{A} = (\neg (\mathcal{T}))^\mathfrak{A}$, and otherwise $(\neg_1(\mathcal{T}))^\mathfrak{A} = \bot_0$. We call $\neg_1$ the \textbf{one-dimensional negation}. In Appendix \ref{unfoappendix} $\mathrm{UNFO}$ is equiexpressive with $\mathrm{GRA}(e,p,s,I,\neg_1,J,\overline{J},\exists)$.

We point out yet another natural fragment inspired by our algebraic approach, namely the algebra $\mathrm{GRA}(e,s,\backslash,\Dot{\cap},\exists)$. This algebra is interesting because, for example, it contains the guarded $\mathrm{FO}^2$ and the guarded $\mathrm{FL}$ on the level of sentences. 
\begin{theorem}\label{theorem:unfo-fluted-complexity}
    The satisfiability problem of $\mathrm{GRA}(e,s,\backslash,\Dot{\cap},\exists)$ is \textsc{ExpTime}-complete.
\end{theorem}
\begin{proof}
See Appendix 
\ref{appendix:unfo-fluted-complexity}
\end{proof}

Going beyond $\mathrm{FO}$ is a key future research direction.
There are many relevant possibilities.
Define the \textbf{equicardinality operator} $H$
such that $(H(\mathcal{T},\mathcal{S}))^{\mathfrak{A}} = \top_0$
if $\mathit{rel}(\mathcal{T}^{\mathfrak{A}})$
and $\mathit{rel}(\mathcal{S}^{\mathfrak{A}})$ have the 
same (possibly infinite cardinal) number of tuples; else
output $\bot_0$. \emph{No restrictions on the input
relation arities are imposed}.
Adding related quantifiers (e.g., the 
H\"{a}rtig quantifier) to quite weak
$\mathrm{FO}$-fragments is
known to lead to
undecidability. Nevertheless, interesting related decidable systems can be defined. Also, the transitive closure operator is interesting and relevant in its various possible forms.

\section{An algebra for the guarded fragment}

In this section we consider
$\mathrm{GRA}(e,p,s,\setminus, \dot\cap , \exists)$
and show that it is sententially equiexpressive with $\mathrm{GF}$.
Recall that $\mathrm{GF}$ is the logic that has all
atoms $R(x_1,\dots,x_k)$, $x=y$ and $x=x$, is closed under $\neg$
and $\wedge$, but existential quantification is
restricted to patterns $\exists x_1\dots \exists x_k(\alpha\, \wedge\, \psi)$
where $\alpha$ is an atomic formula (a guard)
having (at least) all the free variables of $\psi\in \mathrm{GF}$.



Going beyond $\mathrm{FO}$ is a key future research direction.
For the sake of brevity, we 
mention here only one of many relevant operators to be investigated.
Define the \textbf{equicardinality operator} $H$
such that $(H(\mathcal{T},\mathcal{S}))^{\mathfrak{A}} = \top_0$
if $\mathit{rel}(\mathcal{T}^{\mathfrak{A}})$
and $\mathit{rel}(\mathcal{S}^{\mathfrak{A}})$ have the 
same (possibly infinite cardinal) number of tuples; else
output $\bot_0$. \emph{No restrictions on the input
relation arities are imposed}.
Adding related quantifiers (e.g., the 
H\"{a}rtig quantifier) to quite weak
$\mathrm{FO}$-fragments is
known to lead to
undecidability. Nevertheless, we have already 
found natural decidable systems containing
the operator $H$---to be
discussed in the full version---that can be
defined via suitably chosen sets of 
operators used.

We start by
defining the notion of a \textbf{term guard} of a 
term $\mathcal{T}$. Term guards are a central concept in
our proof. The term guard of a term $\mathcal{T}$ of $\mathrm{GRA}(e,p,s,\backslash,\Dot{\cap},\exists)$ is a
tuple $(\mathcal{S},(i_1,\dots ,i_{k}))$, 
where $k = ar(\mathcal{T}) \leq \mathit{ar}(\mathcal{S})$,
with the following properties.

\medskip

\begin{enumerate}
    \item 
    $\mathcal{S}$ is either $e$ or a relation symbol occurring in $\mathcal{T}$.
    \item The tuple $(i_1,\dots , i_k)$ consists of pairwise distinct integers
              $i_j$ such that $1 \leq i_j \leq \mathit{ar}(\mathcal{S})$.
    \item
    For every model $\mathfrak{A}$ and every tuple $(a_1,.\dots , a_k)\in \mathcal{T}^\mathfrak{A}$,
    there exists a tuple $(b_1,\dots , b_{\mathit{ar}(\mathcal{S})}) \in \mathcal{S}^\mathfrak{A}$ so
    that $(a_1,\dots , a_k) = (b_{i_1}, \dots , b_{i_k})$.
%
%
\end{enumerate}

\medskip

The intuition is
that the term guard $(\mathcal{S},(i_1,\dots , i_k))$ of $\mathcal{T}$ gives an atomic term $\mathcal{S}$
and a list $(i_1,\dots , i_k)$ of coordinate positions (of tuples of $\mathcal{S}^{\mathfrak{A}})$ 
that guard the tuples of $\mathcal{T}^{\mathfrak{A}}$.
The remaining $m-k$ coordinate positions of the tuples of $\mathcal{S}^{\mathfrak{A}}$ are intuitively
non-guarding.

The following lemma 
will be used below when translating algebraic 
terms to formulas of the guarded fragment.

\begin{lemma}\label{termguardlemma}
Every term $\mathcal{T}\in \mathrm{GRA}(e,p,s,\setminus,\Dot{\cap},\exists)$ has a term
guard. Furthermore the term guard can be computed from $\mathcal{T}$ in polynomial time.
\end{lemma}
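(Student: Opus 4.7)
The plan is to prove the lemma by structural induction on $\mathcal{T}$, constructing the term guard by tracking indices into a single fixed atomic term (either $e$ or one of the relation symbols occurring in $\mathcal{T}$). For the base cases, if $\mathcal{T} = R$ is a $k$-ary relation symbol then $(R,(1,\dots,k))$ trivially satisfies conditions (1)--(3), and if $\mathcal{T} = e$ then $(e,(1,2))$ is a term guard.

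For the inductive cases, the strategy is to show that a term guard of a subterm lifts to a term guard of the compound term by only modifying the list of indices (and, for $\Dot{\cap}$, by choosing which subterm's guard to use). The atomic guarding term $\mathcal{S}$ is always inherited from a subterm, so it remains either $e$ or a relation symbol occurring in $\mathcal{T}$. Concretely, if a subterm $\mathcal{T}'$ of arity $k$ has term guard $(\mathcal{S},(i_1,\dots,i_k))$, then for $k \geq 2$ one obtains term guards for $p\mathcal{T}'$ and $s\mathcal{T}'$ by applying the corresponding permutation to the index list, giving $(\mathcal{S},(i_k,i_1,\dots,i_{k-1}))$ and $(\mathcal{S},(i_1,\dots,i_{k-2},i_k,i_{k-1}))$ respectively. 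For $\exists \mathcal{T}'$ with $k \geq 1$ the pair $(\mathcal{S},(i_1,\dots,i_{k-1}))$ works, since projecting away the last coordinate simply drops the last index. In the degenerate cases where $p$, $s$, $\exists$ act as the identity (arity at most $1$, or arity $0$ for $\exists$), the same term guard is retained. For $\mathcal{T}_1 \setminus \mathcal{T}_2$, the result is always a subrelation of $\mathcal{T}_1^{\mathfrak{A}}$ of the same arity (including the arity-mismatch case, where by definition the result equals $\mathcal{T}_1^{\mathfrak{A}}$), so the term guard of $\mathcal{T}_1$ is reused. For $\mathcal{T}_1\, \Dot{\cap}\, \mathcal{T}_2$ with respective arities $k_1 \leq k_2$, the result has arity $k_2$ and every tuple in it lies in $\mathcal{T}_2^{\mathfrak{A}}$, so the term guard of $\mathcal{T}_2$ suffices; the case $k_2 \leq k_1$ is symmetric.

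In every inductive step the resulting index list is obtained from a list of pairwise distinct integers by permutation and possibly removal of the last entry, so pairwise distinctness and the bound $1 \leq i_j \leq \mathit{ar}(\mathcal{S})$ are both preserved, and the arity inequality $k \leq \mathit{ar}(\mathcal{S})$ remains valid. Since each step is a constant-time manipulation of a short list of indices, the whole term guard is computable in time linear in $|\mathcal{T}|$. There is no real obstacle to the argument; the only point needing a bit of care is the $\Dot{\cap}$ case, where one must choose the subterm with the larger arity as the source of the inherited term guard so that the final index list has length equal to the arity of the resulting term.
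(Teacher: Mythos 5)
Your proposal is correct and follows essentially the same route as the paper's proof: the same base cases, the same permutation of the index list for $p$ and $s$, dropping the last index for $\exists$, inheriting the guard of the left operand for $\setminus$ and of the larger-arity operand for $\Dot{\cap}$. The paper states the construction slightly more tersely but the induction and the polynomial-time observation are identical.
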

\begin{proof}
We will define inductively a mapping which maps each term $\mathcal{T}$
of the system $\mathrm{GRA}(e,p,s,\backslash, \Dot{\cap},\exists)$ to a term guard for $\mathcal{T}$.
We start by defining that $e$ will be mapped to $(e,(1,2))$ and that
every relational symbol $R$ will be mapped to $$(R,(1,\dots ,ar(R))).$$

Suppose then that we have mapped a $k$-ary term $\mathcal{T}$ to the term guard $$(\mathcal{S},(i_1,\dots,i_k)).$$
%
Using the term guard $(\mathcal{S},(i_1,\dots,i_k))$ as a starting point, we will construct term
guards for the terms $p\mathcal{T}$ and $s\mathcal{T}$.
Firstly, term guard for $p\mathcal{T}$ will be $$(\mathcal{S},(i_k,i_1, \dots , i_{k-1})),$$ where we
have simply permuted the tuple $(i_1,\dots , i_k)$ with $p$. (Note that if $k\leq 1$, then
the permuted
tuple is the same as the original tuple, as $p$
leaves tuples of length up to $1$ untouched.)
Similarly, the term guard for $s\mathcal{T}$ will be $$(\mathcal{S},(i_1, \dots , i_{k-2} , i_k, i_{k-1})),$$ where
this time we have permuted the tuple $(i_1,\dots , i_k)$ with $s$. (Again if $k\leq 1$, the permuted 
tuple is the original tuple.)
%
%

The other cases are similar. Recall the assumption that we have
mapped a $k$-ary term $\mathcal{T}$ to the term
guard $(\mathcal{S},(i_1,\dots,i_k))$, and
suppose further that an $\ell$-ary term $\mathcal{P}$ has been mapped to the
term guard $(\mathcal{S}',(j_1,\dots ,j_\ell))$. If $k\geq \ell$, 
then $\mathcal{T}\, \Dot{\cap}\, \mathcal{P}$ 
will be mapped to $(\mathcal{S},(i_1,\dots ,i_k))$,
and if $k < \ell$, then 
$\mathcal{T}\, \Dot{\cap}\, \mathcal{P}$ 
will be mapped
to $(\mathcal{S}',(j_1,\dots ,j_\ell))$.
Independently of how the arities $k$ and $\ell$ are 
related, $\mathcal{T}\backslash \mathcal{P}$
will always be mapped to $(\mathcal{S},(i_1,\dots ,i_k))$.
(Recall that if the arities of the terms $\mathcal{Q}$ and $\mathcal{R}$ differ, then by
definition $\mathcal{Q}\backslash\mathcal{R}$ is
equivalent to $\mathcal{Q}$).
If $k\geq 1$, the term $\exists \mathcal{T}$ will be
mapped to $(\mathcal{S},(i_1,\dots ,i_{k-1}))$. 
If $k = 0$, the term $\exists \mathcal{T}$ 
simply maps to the same term guard as
the term $\mathcal{T}$.

This completes the definition of the mapping. Since the mapping is clearly computable in polynomial time, the claim follows.
\end{proof}

We will also make use of the following lemma which implies that we
can use $I$ in $\mathrm{GRA}(e,p,s,\setminus,\Dot\cap,\exists)$.

\begin{lemma}\label{identityeliminationlemma}
The operator $I$ can be expressed with $e$ and $\Dot\cap$ as follows. If $\mathit{ar}(
\mathcal{T}) > 1$, then $I\mathcal{T}$ is
equivalent to $\exists(\mathcal{T}\, \Dot{\cap}\, e)$,
and if $\mathit{ar}(
\mathcal{T}) \leq 1$, then $I \mathcal{T}$ is
equivalent to $\mathcal{T}$.
%
\end{lemma}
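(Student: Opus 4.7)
The proof is essentially a direct verification by unfolding the semantic definitions, split into two cases according to $\mathit{ar}(\mathcal{T})$.

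The case $\mathit{ar}(\mathcal{T}) \leq 1$ is immediate: by the definitional clause for $I$, we have $(I(\mathcal{T}))^{\mathfrak{A}} = \mathcal{T}^{\mathfrak{A}}$ for every model $\mathfrak{A}$, so $I\mathcal{T}$ and $\mathcal{T}$ are equivalent.

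For the main case, fix a model $\mathfrak{A}$ and suppose $\mathit{ar}(\mathcal{T}) = k \geq 2$. First I would compute $(\mathcal{T}\, \Dot{\cap}\, e)^{\mathfrak{A}}$. Since $\mathit{ar}(e) = 2$ and $k \geq 2$, the maximum of the two arities is $k$, so applying the definition of suffix intersection, $(\mathcal{T}\, \Dot{\cap}\, e)^{\mathfrak{A}}$ is the $k$-ary AD-relation consisting of all $(a_1,\dots,a_k)$ with $(a_1,\dots,a_k) \in \mathcal{T}^{\mathfrak{A}}$ and $(a_{k-1},a_k) \in e^{\mathfrak{A}}$, i.e., $(a_1,\dots,a_k) \in \mathcal{T}^{\mathfrak{A}}$ with $a_{k-1} = a_k$. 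Then applying the existential operator, $(\exists(\mathcal{T}\, \Dot{\cap}\, e))^{\mathfrak{A}}$ is the $(k-1)$-ary AD-relation consisting of all $(a_1,\dots,a_{k-1})$ such that for some $a_k \in A$, $(a_1,\dots,a_k) \in \mathcal{T}^{\mathfrak{A}}$ and $a_{k-1}=a_k$. Since the witness $a_k$ is forced to equal $a_{k-1}$, this set coincides exactly with $\{(a_1,\dots,a_{k-1}) \mid (a_1,\dots,a_{k-1},a_{k-1}) \in \mathcal{T}^{\mathfrak{A}}\}$.

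Comparing this with the definition of $I$, one sees immediately that $(I(\mathcal{T}))^{\mathfrak{A}}$ is precisely this same $(k-1)$-ary AD-relation, so equivalence follows. There is no real obstacle here; the only thing worth double-checking is that the AD-arities line up correctly (both candidates have arity $k-1$), and that the boundary case $k=2$ still works (yielding a unary AD-relation on both sides).
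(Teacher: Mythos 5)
Your verification is correct: the computation of $(\mathcal{T}\,\Dot{\cap}\,e)^{\mathfrak{A}}$ with $m=k$, the projection via $\exists$, and the comparison with the definitional clause for $I$ all check out, including the arity bookkeeping and the degenerate cases. The paper itself dismisses this lemma with the single word ``Immediate,'' so your argument is just the spelled-out version of exactly the verification the authors had in mind.
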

\begin{proof}
Immediate.
%
\end{proof}

%
%
%

We can now prove the following.

\begin{theorem}
$\mathrm{GRA}(e,p,s,\setminus,\dot\cap,\exists)$ and
$\mathrm{GF}$ are sententially equiexpressive.
\end{theorem}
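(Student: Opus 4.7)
The plan is to establish the two directions of the sentential equivalence by structural induction, using an inductive translation parameterized by a guard on one side and the term guard machinery of Lemma \ref{termguardlemma} on the other.

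For translating a $\mathrm{GF}$-sentence into an arity-$0$ term, I would proceed by induction on subformulas, working with pairs $(\alpha,\psi)$ where $\alpha$ is an atomic formula whose free variables include all free variables of the $\mathrm{GF}$-subformula $\psi$. The translation produces a term of the same arity as $\alpha$ that defines, in every model, the AD-relation of $\alpha\wedge\psi$. Atomic $\psi$ is handled along the lines of Theorem \ref{equiexpressivitytheorem}, with permutations by $p$ and $s$ into place, identifications simulated via Lemma \ref{identityeliminationlemma}, and the guard attached via $\Dot{\cap}$. Negation $\neg\psi$ becomes $\mathcal{A}\setminus\mathcal{T}_{\alpha,\psi}$, exploiting the fact that $\setminus$ acts as relative complement with respect to the algebraic translation $\mathcal{A}$ of the guard. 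Conjunction becomes $\Dot{\cap}$ after alignment, and an inner guarded quantification $\exists\bar{y}(\beta\wedge\chi)$ is dispatched recursively on $(\beta,\chi)$ and then projected using $\exists$ with $p$-rotations bringing the right coordinates to the end, before being merged with the outer guard via $\Dot{\cap}$. The top-level sentential structure is handled directly, exploiting that at arity $0$, $\setminus$ acts as negation (since $\top_0\setminus\top_0=\bot_0$ and $\top_0\setminus\bot_0=\top_0$).

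For the converse direction, the plan is to invoke Lemma \ref{termguardlemma} to equip every term $\mathcal{T}$ of arity $k$ with a term guard $(\mathcal{S},(i_1,\dots,i_k))$, and then to build a $\mathrm{GF}$-formula $\varphi_\mathcal{T}(x_{i_1},\dots,x_{i_k})$ by induction on $\mathcal{T}$. The atoms $e$ and $R$ translate to $x_1=x_2$ and $R(x_1,\dots,x_n)$. The permutations $p$ and $s$ merely reorder the free variable tuple. The operator $\Dot{\cap}$ becomes a conjunction of the subformulas with variable identifications read off from the suffix alignment, $\setminus$ becomes conjunction with a negated subformula, and $\exists$ becomes an existential quantifier. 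The crucial point is that the atomic term produced by the term guard, read as an $\mathrm{FO}$-atom over the variables $x_{i_1},\dots,x_{i_k}$, serves as an explicit $\mathrm{GF}$-guard for each quantifier introduced in the translation, since its variables bound all free variables of $\varphi_\mathcal{T}$. An arity-$0$ term thereby yields a closed $\mathrm{GF}$-formula directly.

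The main obstacle will be the bookkeeping on the $\mathrm{GF}$-to-algebra side: the suffix-oriented semantics of $\Dot{\cap}$, $\exists$ and $I$ in the algebra must be reconciled with the arbitrary variable layouts of $\mathrm{GF}$-subformulas via carefully chosen sequences of $p$ and $s$, which requires meticulous tracking of arities and variable orderings across many inductive cases. A secondary subtlety in the reverse direction is ensuring, for the case $\mathcal{T}_1\setminus\mathcal{T}_2$, that the term guard of $\mathcal{T}_1$ supplied by Lemma \ref{termguardlemma} covers the free variables occurring in the translation of $\mathcal{T}_2$ as well, so that both subformulas sit under a common atomic $\mathrm{GF}$-guard; the uniformity of the construction in Lemma \ref{termguardlemma} secures this, and the sentential equivalence then follows by combining the two directions.
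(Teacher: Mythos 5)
Your proposal follows essentially the same route as the paper's proof: the forward direction is a guard-parameterized induction that attaches guards with $\Dot{\cap}$, simulates identifications via Lemma \ref{identityeliminationlemma}, and renders negation as relative complementation against the guard term, while the reverse direction uses the term guard of Lemma \ref{termguardlemma} to supply the atomic guard for each $\exists$. The only deviations are cosmetic: your explicit treatment of top-level sentential Booleans via arity-$0$ $\setminus$ makes precise something the paper glosses over, and your worry about a common guard in the $\mathcal{T}_1\setminus\mathcal{T}_2$ case is unnecessary since $\mathrm{GF}$ places no guarding restriction on $\neg$ and $\wedge$.
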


\begin{proof}
We will first show that for every formula 
$\exists x_1\dots \exists x_k\, \psi$
of $\mathrm{GF}$, there exists an
equivalent term $\mathcal{T}$
of $\mathrm{GRA}(e,p,s,\setminus,\dot\cap,\exists)$.
Let us begin by 
showing this for a 
formula $\varphi := \exists x_1\dots \exists x_k\, \psi$
where $\psi$ is quantifier-free.
We assume $$\varphi = \exists x_1\dots \exists x_k( 
\alpha(y_1,\dots ,y_n) \wedge \beta(z_1,\dots ,z_m))$$
where $\alpha(y_1,\dots ,y_n)$ is an
atom and we have
$$\{z_1,\dots, z_m\}\subseteq \{y_1,\dots , y_n\}
\text{ and }\{x_1,\dots, x_k\}\subseteq \{y_1,\dots , y_n\}.$$

Now consider a conjunction $\alpha\wedge \rho$
where $\alpha = \alpha(y_1,\dots , y_n)$ is our guard
atom and $\rho$ an arbitrary
atom whose set of variables is a subset of $\{y_1,\dots , y_n \}$.
We call such a conjunction an \emph{$\alpha$-guarded atom}.
For each $\alpha$-guarded atom, we can find an
equivalent term as follows. First, by Corollary \ref{atomcorollary},
we can write a
term equivalent to any atomic FO-formula using $e,p,s,I$; 
note that we can
use $I$ in $\mathrm{GRA}(e,p,s,\setminus,\Dot\cap,\exists)$ by
Lemma \ref{identityeliminationlemma}.
Therefore we can find
terms $\mathcal{T}_{\alpha}$ and $\mathcal{T}_{\rho}$
equivalent to $\alpha$ and $\rho$, respectively.
Now, the term $\mathcal{T}\, \dot\cap\, \mathcal{S}$ is not
likely to be equivalent to $\alpha\wedge\rho$, as
the variables in $\alpha\wedge\rho$ can be unfavourably 
ordered instead of matching each other nicely.
However---recalling that $p$ and $s$ can be composed to produce arbitrary permutations---we first permute $\mathcal{T}_{\alpha}$ to
match $\mathcal{T}_{\rho}$ at the 
last coordinates of tuples, then we combine the terms with $\dot\cap$, and 
finally we permute the obtained term to the final desired form.
In this fashion we obtain a term for an arbitrary $\alpha$-guarded atom.

Now recall the formula $\alpha(y_1,\dots , y_n)\wedge\beta(z_1,\dots,z_m)$
from above. For each atom $\gamma$ in $\beta$,
let $\mathcal{T}_{\gamma}^{\alpha}$ denote the term 
equivalent to the $\alpha$-guarded atom
$\alpha\wedge\gamma$ formed from $\gamma$. The formula $\beta$ is a
Boolean combination composed from 
atoms by using $\neg$ and $\wedge$. We let $\mathcal{T}_{\beta}$
denote the term obtained from $\beta$ by replacing each 
atom $\gamma$ by the term $\mathcal{T}_{\gamma}^{\alpha}$,
each $\wedge$ by $\dot\cap$ and 
each $\neg$ by relative complementation 
with respect to $\mathcal{T}_{\alpha}$ (i.e., 
formulas $\neg\, \eta$ become
replaced by $\mathcal{T}_{\alpha}\setminus \eta^*$
where $\eta^*$ is the translation of the formula $\eta$).
It is easy to show that $\mathcal{T}_{\beta}$ is
equivalent to $\alpha(y_1,\dots , y_n)\wedge\beta(z_1,\dots,z_m)$. 
Thus we can clearly use $p$ and $\exists$ in a
suitable way to the term $\mathcal{T}_{\beta}$ to
get a term equivalent to the formula $\varphi = \exists x_1\dots
\exists x_k(\alpha(y_1,\dots ,y_n) \wedge \beta(z_1,\dots ,z_m))$.

Thus we managed to translate $\varphi$. To get the 
full translation, we mainly just keep repeating the procedure 
just described. The only difference is that above
the formula $\beta(z_1,\dots, z_m)$ was a Boolean 
combination of atoms, while now $\beta$ will also 
contain formulas of 
the form $\exists x_1\dots \exists x_{r}(\delta\wedge\eta)$ in
addition to atoms.
Proceeding by induction, we get a term 
equivalent to $\exists x_1\dots \exists x_{r}(\delta\wedge\eta)$ by
the induction hypothesis, and
otherwise we proceed exactly as described above.
This concludes the argument for translating formulas to terms.

Let us then consider how to translate terms into
equivalent formulas of $\mathrm{GF}$. The proof proceeds by induction.
Since $\mathrm{GF}$ is closed under Boolean operators,
the only non-trivial case is the translation of the projection operator $\exists$.
The hard part in this case is to ensure
that we can translate $\exists$ so that the resulting formula has a 
suitable guarding pattern (with a suitable guard atom) and
thereby belongs to $\mathrm{GF}$.

So suppose that we have translated $\mathcal{T}$ to $\psi(v_1,\dots ,v_k)$.
By Lemma \ref{termguardlemma}, we can find a
term guard $(\mathcal{S},(i_1,\dots,i_k))$ for $\mathcal{T}$.
By the definition of term guards, $\mathcal{S}$ is $e$ or
some relation symbol $R$.
We let $m$ denote the arity of $\mathcal{S}$,
and we let $\alpha(v_1,\dots , v_m)$ denote 
\begin{enumerate}
\item
the formula $R(v_1,\dots ,v_m)$ if $\mathcal{S}$ is a relation symbol,
\item
the formula $v_1 = v_2$ if $\mathcal{S} = e$ and 
therefore $m = 2$.
\end{enumerate}
%
%
%
%
%
Notice that $\{i_1,\dots , i_k\}
\subseteq \{1,\dots , m\}$ by the definition of
term guards.

Now define $$\chi(v_{i_1},\dots , v_{i_k})
:= \exists \overline{v} (\alpha(v_1,\dots,v_m) \land \psi(v_{i_1},\dots,v_{i_k})),$$ where $\overline{v}$
lists those variables from $\{v_1,\dots,v_m\}$ that are \emph{not}
included in $(v_{i_1},\dots,v_{i_k})$. Modifying $\chi(v_{i_1},\dots , v_{i_k})$ to 
the formula  $\chi(v_1,\dots , v_k)$ and recalling the definition of term guards, we
now observe that $\exists \mathcal{T}$ is
equivalent to $\exists v_k \chi(v_1,\dots , v_k)$.
%
%
%
%
%
%
%
%
\end{proof}

It is easy to see that our translations in the above
proof are computable in polynomial time. Since the
satisfiability problem for $\mathrm{GF}$ is \textsc{2ExpTime}-complete, we have the following corollary.

\begin{corollary}
    The satisfiability problem for $\mathrm{GRA}(e,p,s,\backslash,\Dot{\cap},\exists)$ is \textsc{2ExpTime}-complete.
\end{corollary}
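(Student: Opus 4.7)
The plan is to deduce both bounds from the sentential equiexpressivity with $\mathrm{GF}$ established in the preceding theorem, together with the well-known \textsc{2ExpTime}-completeness of $\mathrm{GF}$-satisfiability. The crux is verifying that \emph{both} translations can be carried out in polynomial time; only then can complexity be transferred in both directions.

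For the upper bound, I would revisit the term-to-formula translation used in the previous proof. The induction is Boolean on the outside and, in the essential case of $\exists\mathcal{T}$, consults a term guard $(\mathcal{S},(i_1,\dots,i_k))$ for $\mathcal{T}$. By Lemma \ref{termguardlemma}, such a term guard is computable in polynomial time. The translation of $\exists\mathcal{T}$ then produces $\exists v_k\, \chi(v_1,\dots,v_k)$ where $\chi$ is obtained by prefixing the recursively built formula with the guard atom $\alpha(v_1,\dots,v_m)$ coming from $\mathcal{S}$, quantifying the non-guarding coordinates. The crucial point is that $m$ and $k$ are bounded by the arities occurring in $\mathcal{T}$, and each inductive step adds only a constant-depth formula construct plus a quantifier block of polynomial size; hence the overall translation is polynomial. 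This gives a polynomial-time reduction from satisfiability of $\mathrm{GRA}(e,p,s,\setminus,\Dot{\cap},\exists)$ to satisfiability of $\mathrm{GF}$.

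For the lower bound, I would inspect the formula-to-term translation from the same proof. There one processes a $\mathrm{GF}$-formula $\exists x_1\dots\exists x_k(\alpha\wedge\beta)$ by first translating each atom $\gamma$ of $\beta$ to an $\alpha$-guarded term $\mathcal{T}_\gamma^\alpha$, which by Corollary \ref{atomcorollary} and Lemma \ref{identityeliminationlemma} is constructible in polynomial time from atomic data. Boolean connectives in $\beta$ translate directly to $\setminus$ and $\Dot\cap$, and the quantifier prefix becomes a sequence of at most $k$ applications of $\exists$ and of permutation operators $p,s$, each step adding only polynomial overhead bounded by the arity of $\alpha$. An induction on formula structure then yields a polynomial-time reduction from $\mathrm{GF}$-satisfiability to satisfiability in the algebra, which, combined with the \textsc{2ExpTime}-hardness of $\mathrm{GF}$, delivers the matching lower bound.

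The main obstacle is bookkeeping the polynomial bound on the term-to-formula side: naively unfolding the recursion could duplicate the guard atom $\alpha$ across every atom of $\beta$, and if guards of subformulas are themselves reconstructed from scratch one risks an exponential blow-up. The term-guard machinery of Lemma \ref{termguardlemma} is precisely what averts this, since it lets us reuse an already-occurring relation symbol (or $e$) as a guard rather than manufacturing new ones, and the guard tuple $(i_1,\dots,i_k)$ is short and computable incrementally. Once these polynomial bounds are in place on both sides, the corollary follows immediately.
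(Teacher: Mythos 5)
Your proposal is correct and follows essentially the same route as the paper: both directions of the sentential equiexpressivity proof are polynomial-time translations (the term-guard machinery of Lemma \ref{termguardlemma} being exactly what keeps the term-to-formula direction polynomial), so \textsc{2ExpTime}-completeness transfers from $\mathrm{GF}$ in both directions. The paper states this in one line, whereas you spell out the bookkeeping; the content is the same.
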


%

\section{Decidable fragments of GRA}

In this section we identify
subsystems of $\mathrm{GRA} = \mathrm{GRA}(e,p,s,I,\neg,J,\exists)$ with a
decidable satisfiability problem. We concentrate on systems obtained by 
limiting to a subset of the operators involved. We show that
removing any of the operators $\neg,\exists,J,I$ leads to
decidability, and we also pinpoint the exact
complexity of each system. As a by-product of the work, we 
make observations about conjunctive queries (CQs)
and show \textsc{NP}-completeness of, e.g., $\mathrm{GRA}(\neg,J,\exists)$
and $\mathrm{GRA}(I,\neg,J)$. As a further by-product, we also give a characterization
for quantifier-free $\mathrm{FO}$.

Our first result concerns $\mathrm{GRA}$ with the
complementation operation $\neg$ removed.
All negation-free fragments of $\mathrm{FO}$
are trivially 
decidable---every formula being satisfiable---and thus so is $\mathrm{GRA}(e,p,s,I,J,\exists)$. Nevertheless, this system
has the following very interesting property concerning
conjunctive queries with equality, or CQEs.

\begin{proposition}\label{conj1}
$\mathrm{GRA}(e,p,s,I,J,\exists)$ is
equiexpressive with the set of\, $\mathrm{CQE}$s.
Also, the system $\mathrm{GRA}(p,s,I,J,\exists)$ is equiexpressive 
with the set of conjunctive queries $\mathrm{(CQ}$s$\mathrm{)}$. 
\end{proposition}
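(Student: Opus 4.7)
The plan is to prove both directions of equiexpressivity by revisiting the construction used in Theorem \ref{equiexpressivitytheorem} and then adding a simple structural induction for the converse. The key observation is that nothing essential changes from Theorem \ref{equiexpressivitytheorem}; one only needs to bookkeep which operators are actually used in each case.

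For the direction from logic to algebra, I would inspect the translation given in the proof of Theorem \ref{equiexpressivitytheorem} restricted to a CQE $\varphi = \exists x_1 \cdots \exists x_k\, \psi$, where $\psi$ is a conjunction of relation and equality atoms. Relation atoms $R(v_{i_1},\dots,v_{i_n})$ translate into terms built from $R$, $p$, $s$, $I$ by Corollary \ref{atomcorollary}; equality atoms $v_i=v_j$ and $v=v$ translate into $e$ and $Ie$, possibly precomposed with $p$, $s$; the conjunction is handled by $J$ followed by $p$, $s$, $I$ to merge shared free variables; and each outermost existential quantifier translates into a composition of $p$ and $\exists$. Crucially, $\neg$ is never introduced, so the resulting term lives in $\mathrm{GRA}(e,p,s,I,J,\exists)$. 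When the input is a pure CQ, no equality atom ever appears, so the constant $e$ is likewise never needed and the translation lands inside $\mathrm{GRA}(p,s,I,J,\exists)$.

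For the converse direction, I would proceed by induction on term structure, proving by simultaneous induction that every term of $\mathrm{GRA}(e,p,s,I,J,\exists)$ has an equivalent CQE and every term of $\mathrm{GRA}(p,s,I,J,\exists)$ has an equivalent CQ. The base cases are immediate: $R$ is equivalent to $R(v_1,\dots,v_{\mathit{ar}(R)})$, and $e$ is equivalent to $v_1=v_2$. For the inductive step, I would check that CQEs (resp.\ CQs) are closed under each of the remaining operators. The applications of $p$ and $s$ amount to renaming the free variables of an existing CQE; $J(\mathcal{T},\mathcal{S})$ corresponds to taking the conjunction of the CQEs associated with $\mathcal{T}$ and $\mathcal{S}$ after fixing free variables $v_1,\dots,v_k$ for $\mathcal{T}$ and $v_{k+1},\dots,v_{k+\ell}$ for $\mathcal{S}$ and renaming the existentially quantified variables so they are pairwise disjoint; $\exists\mathcal{T}$ simply adds a single outermost existential quantifier binding the last free variable of the formula for $\mathcal{T}$; and $I\mathcal{T}$ corresponds to substituting the second-to-last free variable for the last one throughout the conjunction of atoms defining $\mathcal{T}$, which preserves the conjunctive-query shape (and, importantly for the CQ case, does not introduce an equality atom, since $I$ performs a genuine variable substitution rather than asserting $v_{k-1}=v_k$).

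The only real obstacle is the variable-management bookkeeping in the inductive step, especially for $J$, where one must carefully alpha-rename the existentially bound variables of the two operands to keep the quantifier blocks disjoint, and for $I$ in the pure CQ case, where one must verify that the substitution genuinely produces another conjunction of relational atoms without needing to invoke an equality atom. Once these routine checks are in place, both equiexpressivity claims follow immediately from the induction, giving the two statements of Proposition \ref{conj1}.
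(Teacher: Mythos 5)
Your proposal is correct and follows essentially the same route as the paper: the forward direction by inspecting which operators the translation of Theorem \ref{equiexpressivitytheorem} actually uses on CQEs (noting that $\neg$ never appears, and that $e$ is needed only for equality atoms), and the converse by an easy induction on term structure showing that CQEs (resp.\ CQs) are closed under each operator. Your explicit treatment of $I$ as a variable substitution that preserves the conjunctive-query shape is exactly the point the paper leaves implicit in its ``easy induction,'' so the extra detail is welcome but not a departure.
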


\begin{proof}
Analyzing the proof that $\mathrm{GRA}(e,p,s,I,\neg, J,\exists)$
and $\mathrm{FO}$ are equiexpressive, we see
that $\mathrm{GRA}(e,p,s,I,J,\exists)$ can
express every formula built from relational atoms and 
equality atoms with conjunctions and existential quantification.
Conversely, an easy induction on term structure establishes that
every term of the system $\mathrm{GRA}(e,p,s,I,J,\exists)$ is
expressible by a CQE.
The claim for $\mathrm{GRA}(p,s,I,J,\exists)$ follows similarly, noting that
equality atoms are used only to express the atoms $x= x$ and $x= y$ in
the proof of Theorem \ref{equiexpressivitytheorem}.
%
\end{proof}

We then study $\mathrm{GRA}$ without $\exists$.
We begin with the following expressivity characterization.

\begin{proposition}
$\mathrm{GRA}(e,p,s,I,\neg,J)$ is equiexpressive with 
the set of quantifier-free FO-formulas.
\end{proposition}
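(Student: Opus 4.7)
The plan is to argue both directions by appealing closely to the proof of Theorem \ref{equiexpressivitytheorem} and Corollary \ref{atomcorollary}, checking carefully that $\exists$ is never needed when the source formula is quantifier-free.

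For the direction from terms to formulas, I would proceed by induction on term structure, defining a translation $\mathcal{T}\mapsto \varphi_{\mathcal{T}}$ into quantifier-free formulas. The base cases are $e\mapsto v_1=v_2$ and $R\mapsto R(v_{i_1},\dots,v_{i_k})$ for a $k$-ary relation symbol (with the canonical increasing variable indices used for $\varphi^{\mathfrak{A}}$). For the inductive step, $\neg\mathcal{T}$ becomes $\neg\varphi_{\mathcal{T}}$, and $J(\mathcal{T},\mathcal{S})$ becomes $\varphi_{\mathcal{T}}\wedge\varphi_{\mathcal{S}}'$, where $\varphi_{\mathcal{S}}'$ is obtained from $\varphi_{\mathcal{S}}$ by renaming its free variables to fresh ones placed immediately after those of $\varphi_{\mathcal{T}}$ in the variable ordering (matching the concatenation of tuples implemented by $J$). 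For $p\mathcal{T}$ and $s\mathcal{T}$, I would apply the corresponding cyclic permutation or swap of the last two free variables (again renaming to keep the free-variable tuple in increasing index order). For $I\mathcal{T}$, the translation identifies the two last variables and drops the duplicate. At no point is a quantifier introduced, so each $\varphi_{\mathcal{T}}$ is quantifier-free.

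For the converse direction, I would simply revisit the construction in the proof of Theorem \ref{equiexpressivitytheorem} and observe that the operator $\exists$ is used there \emph{only} in the case of formulas of the form $\exists v_i\,\varphi$. Atomic formulas---including equality atoms $x=x$ and $x=y$, which map to $Ie$ and $e$, and relation atoms $R(v_{i_1},\dots,v_{i_k})$, which map to terms built using $p$, $s$, $I$ by Corollary \ref{atomcorollary}---use none of $\exists$. Negation translates via $\neg$, and conjunction is handled by first applying $J$ and then cleaning up overlapping variables with $p,s,I$, none of which is $\exists$. Disjunction and implication are then definable in the usual way from $\neg$ and $\wedge$. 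Hence every quantifier-free FO-formula translates into a term of $\mathrm{GRA}(e,p,s,I,\neg,J)$.

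The main point to be careful about is really just bookkeeping of free-variable lists: one must insist throughout the term-to-formula direction that the translated formula has its free variables listed in the canonical increasing-index order, so that the notion of ``the AD-relation defined by a formula'' matches the AD-relation produced by the algebra. Once this convention is fixed, both translations become routine structural inductions, and the proposition follows immediately. No nontrivial obstacle is expected; the content of the statement is essentially the observation that dropping $\exists$ from the signature corresponds exactly to dropping quantifiers from the logic.
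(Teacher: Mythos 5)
Your proposal is correct and follows essentially the same route as the paper: a structural induction on terms for the term-to-formula direction (with $I$ handled by identifying the last two free variables, exactly the paper's substitution trick), and for the converse an inspection of the proof of Theorem \ref{equiexpressivitytheorem} noting that $\exists$ is only ever invoked to translate quantifiers. Your version just spells out the variable-index bookkeeping more explicitly than the paper does.
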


\begin{proof}
The fact that every term of $\mathrm{GRA}(e,p,s,I,\neg,J)$ translates to a
quantifier-free formula is seen by induction on terms. The non-trivial case is
the use of $I$. Here, similarly to what we already observed in the proof of
Corollary \ref{atomcorollary}, we note that if $\mathcal{T}$ and $\varphi$ are
equivalent, then so are $I(\mathcal{T})$
and the formula $\varphi'$ obtained from $\varphi$ by replacing the occurrences of the
free variable with the greatest subindex by the variable with the
second greatest subindex. (Here we must take care to avoid variable capture.)

The fact that quantifier-free FO translates into $\mathrm{GRA}(e,p,s,I,\neg,J)$ is
immediate by inspection of the proof of Theorem \ref{equiexpressivitytheorem}.
\end{proof}

The translations of the proposition are clearly polynomial,
whence we get the following corollary due to the folklore
result that the the satisfiability problem for quantifier-free FO is $\textsc{NP}$-complete.

\begin{corollary}\label{whatevercorollary}
The satisfiability problem for $\mathrm{GRA}(e,p , s,I,\neg, J)$ is \textsc{NP}-complete.
\end{corollary}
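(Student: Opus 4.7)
The plan is to use the two directions of the preceding proposition together with the standard fact that satisfiability of quantifier-free $\mathrm{FO}$ (equivalently, propositional satisfiability modulo the equality symbol) is $\textsc{NP}$-complete. The upper bound will come from the forward translation (terms $\to$ quantifier-free formulas) together with an $\textsc{NP}$ algorithm for quantifier-free $\mathrm{FO}$: guess a small model on at most $\mathit{ar}(\mathcal{T})$ elements (plus one spare for ensuring non-emptiness when $\mathit{ar}(\mathcal{T})=0$) and an assignment, then verify in polynomial time. The lower bound will come from the backward translation (quantifier-free formulas $\to$ terms), applied to the standard reduction from \textsc{SAT} that views propositional variables as $0$-ary relation symbols.

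The essential step is therefore to verify that both translations of the preceding proposition are computable in polynomial time. For the forward direction, an induction on the structure of $\mathcal{T}$ shows that the associated quantifier-free formula grows only by an additive constant at each constructor: the cases $e$ and $R$ produce an atom; $p$ and $s$ merely permute the free-variable tuple of the already-constructed formula; $\neg$ adds one negation; $J$ joins two formulas with a $\wedge$ after a disjoint renaming of free variables; and $I$ performs the single substitution described in the proof of the proposition. Thus the output formula has size linear in $\lvert\mathcal{T}\rvert$ (and can be produced in polynomial time once suitable care is taken with variable renaming).

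For the backward direction I would revisit the construction from the proof of Theorem \ref{equiexpressivitytheorem}, restricted to the quantifier-free case. Each atom $R(v_{i_1},\dots,v_{i_k})$ is translated by a fixed sequence of $p$s and $s$s to permute coordinates, followed by at most $k$ applications of $I$ to account for repeated variables; this is $O(k^2)$ operators per atom. Negations translate to a single $\neg$, and each conjunction $\varphi\wedge\psi$ becomes $J(\mathcal{S},\mathcal{T})$ followed by an $O((k+\ell)^2)$ sequence of $p,s,I$ to reconcile the free-variable tuples of the two conjuncts. Summing over the subformulas yields a polynomial bound in the size of the input formula.

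The main obstacle is simply the bookkeeping in the backward translation: one must make sure that the permutations and identifications used to align free variables of conjuncts do not cascade into a super-polynomial blow-up when nested inside further conjunctions. Since each conjunction introduces only $O((k+\ell)^2)$ new operators on top of the subterms, and the arities are bounded by the number of free variables of the original formula, the blow-up stays polynomial. Combining the two polynomial translations with the folklore \textsc{NP}-completeness of quantifier-free $\mathrm{FO}$ satisfiability then yields the corollary.
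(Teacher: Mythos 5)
Your proposal is correct and follows essentially the same route as the paper: the paper likewise derives the corollary by observing that both translations of the preceding proposition are polynomial and invoking the folklore \textsc{NP}-completeness of quantifier-free $\mathrm{FO}$ satisfiability. You merely spell out the polynomial-size bookkeeping that the paper dismisses as ``clearly polynomial,'' which is a harmless (and arguably welcome) elaboration.
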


To sharpen the lower bound result, we also prove the following.

\begin{proposition}\label{whateverproposition}
The satisfiability problem for $\mathrm{GRA}(I,\neg, J)$ is \textsc{NP}-complete.
\end{proposition}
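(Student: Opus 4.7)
The plan is to get the upper bound for free from Corollary \ref{whatevercorollary} and then establish \textsc{NP}-hardness by a direct polynomial reduction from propositional satisfiability. Since $\mathrm{GRA}(I,\neg,J)$ is a subsystem of $\mathrm{GRA}(e,p,s,I,\neg,J)$, any term of the former is a term of the latter, so satisfiability of $\mathrm{GRA}(I,\neg,J)$ immediately inherits the \textsc{NP} upper bound from Corollary \ref{whatevercorollary}. All the work is in the lower bound.

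For \textsc{NP}-hardness, the key observation is that nullary relation symbols behave exactly like propositional variables under the operators we still have. If $P$ is a $0$-ary relation symbol, then $P^{\mathfrak{A}}$ is either $\top_0^A = (\{\epsilon\},0)$ or $\bot_0^A = (\varnothing,0)$, so picking an $A$-structure amounts to picking a truth assignment. Identifying $\top_0$ with true and $\bot_0$ with false, the operator $\neg$ acts as propositional negation on nullary terms, while $J$ applied to two nullary terms $\mathcal{T}, \mathcal{S}$ yields a nullary term whose interpretation is $\top_0$ iff both $\mathcal{T}^{\mathfrak{A}} = \top_0$ and $\mathcal{S}^{\mathfrak{A}} = \top_0$; that is, $J$ acts as propositional conjunction. (The operator $I$ is not needed for the reduction: on nullary terms it is the identity by definition.)

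Hence the plan for \textsc{NP}-hardness is to reduce propositional satisfiability (in the $\{\neg,\wedge\}$ basis, which is already \textsc{NP}-complete) to satisfiability of $\mathrm{GRA}(I,\neg,J)$. Given a propositional formula $\varphi$ over variables $p_1,\dots,p_n$, assign each $p_i$ a distinct nullary relation symbol $P_i$ and define inductively a term $\mathcal{T}_\varphi$ by $\mathcal{T}_{p_i} = P_i$, $\mathcal{T}_{\neg\psi} = \neg\mathcal{T}_\psi$, and $\mathcal{T}_{\psi \wedge \chi} = J(\mathcal{T}_\psi,\mathcal{T}_\chi)$. A straightforward induction on $\varphi$ shows that for any model $\mathfrak{A}$, $\mathcal{T}_\varphi^{\mathfrak{A}} = \top_0^A$ iff the truth assignment sending $p_i$ to $1$ exactly when $P_i^{\mathfrak{A}} = \top_0^A$ satisfies $\varphi$. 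Therefore $\varphi$ is satisfiable iff $\mathcal{T}_\varphi$ is satisfiable as a term of $\mathrm{GRA}(I,\neg,J)$, and the construction of $\mathcal{T}_\varphi$ is clearly polynomial in $|\varphi|$.

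No part of this argument is really difficult; the only subtle point to verify is that the semantics of $\neg$ and $J$ on $0$-ary AD-relations really matches classical propositional negation and conjunction, which is immediate from the explicit clauses given for these operators (together with the convention that $\epsilon$ is the identity of concatenation, so that $J$ of two copies of $\top_0$ is again $\top_0$). Combining the \textsc{NP} upper bound and this reduction yields \textsc{NP}-completeness of the satisfiability problem for $\mathrm{GRA}(I,\neg,J)$.
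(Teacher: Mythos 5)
Your proof is correct, and the upper bound is obtained exactly as in the paper, but your lower bound takes a genuinely different route. The paper also reduces from SAT, but it encodes each proposition symbol $p_i$ by a \emph{unary} relation symbol $P_i$; the term built from these with $\neg$ and $J$ alone is not yet equisatisfiable with $\varphi$ (e.g.\ $p_1\wedge\neg p_1$ becomes a term equivalent to $P_1(v_1)\wedge\neg P_1(v_2)$, which is satisfiable), so the paper must finish by applying $I$ repeatedly to identify all the variables introduced by the joins. You instead use \emph{nullary} relation symbols, under which $\neg$ and $J$ act literally as propositional negation and conjunction on $\{\top_0,\bot_0\}$, so $I$ is never needed. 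Your reduction is legitimate in the paper's framework: the full relational vocabulary $\hat{\tau}$ explicitly contains symbols of every arity $k\geq 0$, and nullary symbols are used elsewhere in the paper, with the semantics of $\neg$ and $J$ on $0$-ary AD-relations behaving exactly as you claim. Your version is shorter and in fact establishes the stronger statement that already $\mathrm{GRA}(\neg,J)$ is \textsc{NP}-hard (it would likewise simplify the paper's later hardness argument for $\mathrm{GRA}(\neg,J,\exists)$, which routes through $\forall x\, P_i(x)$); the paper's version has the compensating advantage of surviving in settings restricted to relation symbols of positive arity, where the nullary trick is unavailable and the use of $I$ becomes genuinely necessary.
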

\begin{proof}
The upper bound follows from Corollary \ref{whatevercorollary}.
For the lower bound, we give a reduction from SAT. 
So, let $\varphi$ be a propositional logic formula and $\{p_1,\dots ,p_n\}$ the set of proposition symbols of $\varphi$. Let $\{P_1,\dots ,P_n\}$ be a set of unary relation symbols. We translate $\varphi$ to an equisatisfiable term $\mathcal{T}$ of $\mathrm{GRA}(I,\neg,J)$ as follows. Every $p_i$ translates to $P_i$. If $\psi$ is
translated to $\mathcal{T}$, then $\neg \psi$ is
translated to $\neg \mathcal{T}$. If $\psi$ translates to $\mathcal{T}$
and $\theta$ to $\mathcal{P}$, then $\psi \wedge \theta$ is translated to $J(\mathcal{T},\mathcal{P})$.
Let $\mathcal{T}(\varphi)$ be the resulting term obtained like this 
from $\varphi$,
and note that $\mathcal{T}(\varphi)$ is not yet
equisatisfiable with $\varphi$, as for
example $p_1 \wedge \neg p_1$
translates to a term equivalent to the formula $P_1(v_1) \wedge \neg P_1(v_2)$.
Thus we still need to express that all
variables $v_i$ are equal. This is easy to do with repeated use of the operator $I$.
\end{proof}

We then leave the investigation of the $\exists$-free 
system $\mathrm{GRA}(e,p , s,I,\neg, J)$ and
consider the join-free fragment of $\mathrm{GRA}$. This system
turns out to be interestingly tame, having a very low complexity.

\begin{theorem}
Satisfiability of $\mathrm{GRA}(e,p,s, I,\neg,\exists)$  
can be checked by a finite automaton, i.e., the 
set of satisfiable terms is in \textsc{REG}.
\end{theorem}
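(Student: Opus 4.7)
The key structural observation is that, because $J$ is absent from $\mathrm{GRA}(e,p,s,I,\neg,\exists)$, every term is obtained from a \emph{single} atomic term (either $e$ or some single relation symbol $R$) by successively applying unary operators drawn from $\{p,s,I,\neg,\exists\}$. Moreover, since only $I$ and $\exists$ alter arity --- and both reduce it --- the arity of every subterm is bounded above by the arity $r$ of the initial atom. Viewing a term as a word that begins with the atom and is followed by a string of unary operators (over a finite alphabet, with the arity $r$ encoded in unary, say), the goal is to exhibit a finite-state machine that, when run on such a word from the atom outwards, accepts precisely the satisfiable terms.

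My plan is to attach to each subterm $\mathcal{T}$ of arity $k \leq r$ a \emph{finite-state invariant} $\mathrm{inv}(\mathcal{T})$ such that $\mathrm{inv}(\mathcal{T})$ determines satisfiability of $\mathcal{T}$ and is inductively computable from $\mathrm{inv}(\mathcal{T}')$ whenever $\mathcal{T} = f(\mathcal{T}')$ for a unary $f$. A natural candidate records, for each partition $\pi$ of $\{1,\ldots,k\}$, two Boolean flags: $T_\pi(\mathcal{T})$, saying that some model $\mathfrak{A}$ realises a $\pi$-tuple inside $\mathcal{T}^{\mathfrak{A}}$, and $V_\pi(\mathcal{T})$, saying that every model $\mathfrak{A}$ contains every $\pi$-tuple of $A^k$ inside $\mathcal{T}^{\mathfrak{A}}$. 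Since the number of partitions of $\{1,\ldots,k\}$ for $k \leq r$ is bounded, this invariant is finite-valued, and $\mathcal{T}$ is satisfiable precisely when $T_\pi(\mathcal{T}) = 1$ for some $\pi$.

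Propagation through $\neg$, $p$, $s$, and $I$ is routine: $\neg$ swaps $T_\pi$ with $\neg V_\pi$ and vice versa; $p$ and $s$ reindex $\pi$ by the appropriate permutation; and $I$ restricts attention to the unique partition of $\{1,\ldots,k-1\}$ obtained from a $k$-partition by placing the two last coordinates into the same block. The principal obstacle, which I expect to be the main technical heart of the proof, is the projection $\exists$. For $T_\pi$ the propagation is easy: $T_\pi(\exists\mathcal{T})$ is the disjunction of $T_{\pi'}(\mathcal{T})$ over all partitions $\pi'$ of $\{1,\ldots,k\}$ extending $\pi$ (by placing $k$ into an existing block or as a fresh singleton). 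For $V_\pi$, however, the propagation is more delicate, because $V_\pi(\exists\mathcal{T})$ asks that in \emph{every} model \emph{every} $\pi$-tuple have \emph{some} extension in $\mathcal{T}^{\mathfrak{A}}$, and this is not in general a Boolean combination of the $V_{\pi'}(\mathcal{T})$. I plan to overcome this by enriching the invariant with further partition-indexed data (such as cardinality-dependent flags, or a refined profile recording which sets of extensions are simultaneously realizable by a single model), sufficient to compute the updated $V_\pi$ under $\exists$.

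Once all five operators are shown to preserve the enriched finite invariant, the finite automaton is immediate: its states are the invariant values, its transitions mirror the action of $p,s,I,\neg,\exists$, its initial state is the invariant assigned to the atom, and its accepting set consists of those values with $T_\pi = 1$ for some $\pi$. This shows that the set of satisfiable terms of $\mathrm{GRA}(e,p,s,I,\neg,\exists)$ lies in \textsc{REG}.
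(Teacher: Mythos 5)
Your overall strategy---reading the term as a word consisting of one atom followed by a string of unary operators, and maintaining a finite compositional invariant that determines satisfiability---is the right shape of argument, but as written it has two genuine gaps. The first is fatal to finiteness: the vocabulary $\hat{\tau}$ contains relation symbols of \emph{every} arity, so the arity $r$ of the initial atom is unbounded, and an invariant indexed by the partitions of $\{1,\dots,k\}$ for $k\leq r$ has a number of components growing like the Bell numbers. A finite automaton cannot carry this; encoding $r$ in unary on the tape does not help, since the machine would still have to remember partition-indexed data of unbounded size in its state. Your claim that ``the invariant is finite-valued'' is only true for each fixed $r$, not uniformly. The second gap is the one you flag yourself: the propagation of $V_\pi$ through $\exists$ requires knowing which sets of extensions are \emph{simultaneously} realizable in a single model, which your flags do not record, and the promised enrichment is never specified. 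That step is the technical heart of your plan and it is missing.

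The missing idea that makes the theorem easy is a drastic collapse of the case analysis, which the paper obtains as follows. For any term $h_1\dots h_n(R)$ rooted at a relation symbol $R$ (of any arity $k$), take two models on the same singleton domain $\{a\}$, one with $R$ interpreted as $\top_k$ and one with $R$ interpreted as $\bot_k$. On a singleton domain every term denotes $\top_n$ or $\bot_n$, and an easy induction shows the two interpretations stay complementary under each of $p,s,I,\neg,\exists$; hence the term is $\top_n$ in one of the two models, so \emph{every} term rooted at a relation symbol is satisfiable and the automaton need not track anything about it---in particular the arity of $R$ is irrelevant. The only nontrivial atoms are then $e$-rooted terms, whose arity never exceeds $2$, and there satisfiability reduces to parities of the number of $\neg$'s before and after the first occurrence of $I$ or $\exists$ (with a small case split on singleton versus larger domains when that first occurrence is $\exists$). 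This yields a small explicit automaton. I would suggest you look for a structural observation of this kind that bounds the state space, rather than trying to enrich the partition invariant, which I do not see how to make uniformly finite.
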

\begin{proof}
Consider first terms that are built up starting from $e$ and
using $p,s,I,\neg$ and $\exists$. If neither $\exists$ nor $I$ 
occurs in such a term, it is clearly satisfiable, so we may consider
the case where the term has $\exists$ or $I$ in it. Let $\mathcal{T}$
denote the term under investigation. Suppose $\mathcal{T}$ is of
the form $f_1\dots f_n\, g_1 \dots g_m( e )$ where $f_n\in \{\exists, I\}$
and $g_1,\dots , g_m \in \{p,s,\neg\}$, so the subterm $g_1\dots g_m( e )$ is a
term of $\mathrm{GRA}(e,p,s,\neg)$. Notice that on every model,
the interpretation of $g_1\dots g_m( e )$ is the identity relation if
the number of negations in $g_1\dots g_m$ is even, and otherwise it is
the complement of the identity relation. Notice also that the complement of
the identity relation is the empty binary relation iff the model $\mathfrak{A}$ in
question has a singleton domain, i.e., $|A| = 1$.

First suppose that $f_n = I$. Now, on every model $\mathfrak{M}$,
we have $(I g_1\dots g_m( e ))^\mathfrak{M}= \bot_1^M$ if $g_1\dots g_m$ has an odd number of
negations, and otherwise $(I g_1\dots g_m( e ))^\mathfrak{M}= \top_1^M$.
Therefore, if $g_1\dots g_n$ has an odd number of negations,
the term $$\mathcal{T} = f_1\dots f_n g_1 \dots g_m(e)$$ is 
satisfiable iff the number of negations in $f_1\dots f_{n-1}$ is odd.
If $g_1\dots g_n$ has an even number of negations,
the term $\mathcal{T}$ is 
satisfiable iff the number of negations in $f_1\dots f_{n-1}$ is even.

Suppose then that $f_n = \exists$, and consider
satisfiability on models $\mathfrak{M}$ with a singleton domain.
Restricting to such models, the criteria for
satisfiability are obtained in the same way as in the above case for $f_n = I$, as
now $(\exists\, g_1\dots g_m( e ))^\mathfrak{M}= \bot_1^M$ if $g_1\dots g_m$ has an odd number of
negations, and otherwise $(\exists\, g_1\dots g_m( e ))^\mathfrak{M}= \top_1^M$.
Thus we consider satisfiability over models with at least two elements.
On every such model $\mathfrak{N}$, we
always have $(\exists\, g_1\dots g_m( e ))^\mathfrak{N}=  \top_1^N$.
Therefore the term $\mathcal{T} = f_1\dots f_n g_1 \dots g_m(e)$ with $f_n = \exists$ is
satisfiable on a model with at least two elements iff
the number of negations in $f_1\dots f_{n-1}$ is even.
Bringing the cases for singleton and non-singleton domains together, we
see that with $f_n = \exists$, the term $\mathcal{T}$ is satisfiable if (1) 
the number of negations in $f_1\dots f_{n-1}$ is even, or (2) both $g_1\dots g_m$
and $f_1\dots f_{n-1}$ have an odd number of negations.

Based on the above, it is easy to see how to construct a finite
automaton for checking satisfiability of terms built from $e$ by 
applying $p,s,I,\neg,\exists$. To conclude the proof, we will show that all terms of type $h_1\dots h_n( R )$ of $\mathrm{GRA}(e,p,s,I,\neg,\exists)$, where $R$ is $k$-ary relation symbol, 
are satisifiable.

Now, define two $\{R\}$-models $\mathfrak{A}$ and $\mathfrak{B}$,
both having the same singleton domain $A=\{a\}$ but 
with $R^{\mathfrak{A}} = \top_k^A$ and $R^{\mathfrak{B}} = \bot_k^A$.
In any model $\mathfrak{M}$ with a singleton domain, every $n$-ary 
term $\mathcal{T}$ of $\mathrm{GRA}$ can receive only two 
interpretations, $\bot_n^M$ or $\top_n^M$.
Using this observation and
induction over the structure of terms $\mathcal{T}$
formed from $R$ with $p,s,I,\neg,\exists$, we
can show that $\mathcal{T}^\mathfrak{A} = \bot_n^A$ 
iff $(\neg\mathcal{T})^\mathfrak{B} = \bot_n^A$.
This implies, in particular, that
every such term $\mathcal{T}$ is
satisfiable, since if $\mathcal{T}^\mathfrak{A} = \bot_n^A$, 
then $(\neg\mathcal{T})^\mathfrak{B} = \bot_n^A$, 
and thus $\mathcal{T}^\mathfrak{B} = \top_n^A$.
\end{proof}

We will then study $\mathrm{GRA}$ without $I$
and show that it is
\textsc{NP}-complete.
We begin by identifying a new decidable
fragment $\mathcal{F}$ of $\mathrm{FO}$. The newly identified logic $\mathcal{F}$
turns out to be an interesting, low complexity fragment of $\mathrm{FO}$, as we will
prove it \textsc{NP}-complete. The 
fragment $\mathcal{F}$ is defined as follows.\smallskip
%
%
%
\begin{enumerate}
%
    \item 
   $R(x_1,\dots ,x_n)\in \mathcal{F}$ and $x_1 = x_2 \in \mathcal{F}$ for all relation symbols $R$
    and all variables $x_1,\dots , x_n$.
    \item If \scalebox{0.90}[1.1]{$\varphi, \psi \in \mathcal{F}$}
              and \scalebox{0.9}[1]{$Free(\varphi) \cap Free(\psi) = \varnothing$},
              then \scalebox{0.9}[1]{$(\varphi \wedge \psi) \in \mathcal{F}$}.
    \item If $\varphi \in \mathcal{F}$, then
    $\neg\varphi\in\mathcal{F}$.
    \item
    If $\varphi \in \mathcal{F}$, then $\exists x \varphi \in \mathcal{F}$ for any $x$.\smallskip
\end{enumerate}

\medskip

We  then give a series of lemmas, 
ultimately showing \textsc{NP}-completeness of the system 
$\mathrm{GRA}(e,p, s,\neg,J, \exists)$.
We first 
show that the satisfiability problem of $\mathcal{F}$ is complete for \textsc{NP}.
The upper bound is based on a reduction to the satisfiability
problem of the set of \emph{relational Herbrand sentences}.
These are FO-sentences of the form $$Q_1x_1\dots Q_nx_n \bigwedge_i \eta_i$$
where $Q_i\in \{ \exists, \forall \}$ are quantifiers and each $\eta_i$ is a first-order literal.
We note the fact that checking satisfiability of equality-free relational
Herbrand sentences is known to be  \textsc{PTime}-complete,
see Theorem 8.2.6 in \cite{borger97}. However, there seems to be no explicit
proof of the \textsc{PTime}-completeness of case with equality in the literature, so we provide it in the appendix
(see Lemma \ref{HERBRANDEQUALITY} and the auxiliary Lemma \ref{HERBRANDONEELEMENT}
before that).

\begin{lemma}\label{flemma}
The satisfiability problem of $\mathcal{F}$ is \textsc{NP}-complete.
\end{lemma}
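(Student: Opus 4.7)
For the lower bound, the plan is a direct reduction from propositional SAT: given a propositional formula $\chi$ over variables $p_1,\dots,p_n$, replace each $p_i$ by a distinct nullary relation symbol $R_i$ and map $\wedge,\neg$ literally. Every conjunction in the resulting formula joins subformulas whose free-variable sets are empty, hence trivially disjoint, so the formula lies in $\mathcal{F}$ and is satisfiable iff $\chi$ is.

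For the upper bound, the plan is to reduce $\mathcal{F}$-satisfiability to satisfiability of a relational Herbrand sentence after a single nondeterministic guess. First, I would put $\varphi$ into a prenex normal form $Q_1 x_1 \dots Q_n x_n \psi$ with all bound variables pairwise distinct; the defining restriction of $\mathcal{F}$ is preserved by the standard prenexing manipulations (pulling $\exists x$ out of $\alpha \wedge \beta$ keeps the inner conjunction's free-variable sets disjoint because $x$ had already been renamed out of $\beta$). The key structural observation is that in such a quantifier-free $\psi \in \mathcal{F}$, each variable appears in at most one atom: otherwise the least common ancestor of two atoms sharing a variable $x$ would be a $\wedge$ gate (or, after passing to negation normal form, possibly a $\vee$ gate) both of whose children have $x$ free, contradicting the $\mathcal{F}$-restriction.

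From this observation one derives that quantifiers distribute over $\wedge$ and $\vee$ whenever the two operands have disjoint free variables, i.e.\ $Q\overline{x}(\alpha \circ \beta) \equiv (Q\overline{x}_\alpha \alpha)\circ(Q\overline{x}_\beta \beta)$ for $\circ \in \{\wedge,\vee\}$, where $\overline{x}_\alpha,\overline{x}_\beta$ partition $\overline{x}$ according to which operand each variable occurs in. Iterating this, $\varphi$ becomes equivalent to a Boolean combination $B$ of ``atomic sentences'' $Q_A^{*} A$, one per atom $A$ of $\psi$, where $Q_A^{*}$ is the original quantifier prefix restricted to the variables of $A$. The algorithm then guesses a Boolean assignment $\tau$ to the atoms of $\psi$, checks in polynomial time that $\psi[\tau]=T$ by propositional evaluation, and checks using Lemma \ref{HERBRANDEQUALITY} that the Herbrand sentence $Q_1 x_1 \dots Q_n x_n \bigwedge_A A^{\tau(A)}$ is satisfiable.

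For correctness, if both checks succeed then any model $\mathfrak{A}$ of the Herbrand sentence makes every atom $A$ take value $\tau(A)$ along every play of the quantifier game, so $\psi[\tau]=T$ forces $\mathfrak{A}\models\varphi$; conversely, given $\mathfrak{A}\models\varphi$, letting $\tau(A)$ be the truth value of $Q_A^{*}A$ in $\mathfrak{A}$ gives $B(\tau)=T$, and recombining the witnessed sentences $Q_A^{*}A^{\tau(A)}$ (legal since variables of distinct atoms are disjoint) shows that $\mathfrak{A}$ models the Herbrand sentence. The main obstacle is the quantifier-distribution identity itself, specifically the subtle case $\forall\overline{x}\forall\overline{y}(\alpha(\overline{x})\vee\beta(\overline{y})) \equiv \forall\overline{x}\alpha\vee\forall\overline{y}\beta$, which fails without the disjointness hypothesis and is the nontrivial ingredient making the decomposition into independent atomic sentences go through.
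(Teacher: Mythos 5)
Your lower bound is fine, and your key structural observation---that after renaming bound variables apart no variable can occur in two distinct atom occurrences of an $\mathcal{F}$-formula, since the least common ancestor of the two occurrences would be a conjunction violating the disjointness condition---is correct; so are the quantifier-distribution identities over $\wedge$ and $\vee$ for operands with disjoint free variables. The upper bound, however, breaks at the decomposition step. First, $\varphi$ is \emph{not} equivalent to the Boolean combination $B$ of the sentences $Q_A^{*}A$: quantifiers do not commute with negation, so e.g.\ $\exists x\,\neg P(x)$ would turn into $\neg\exists x\,P(x)$. That is repairable (work in negation normal form and push the prefix down to the \emph{literals}), but the second problem is fatal: your Herbrand sentence $H_\tau := Q_1x_1\dots Q_nx_n\bigwedge_A A^{\tau(A)}$ commits to a quantified literal for \emph{every} atom, including atoms sitting in disjuncts that play no role in making $\psi[\tau]$ true, and distinct atoms may share a relation symbol. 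Concretely, let $\varphi_0 := \alpha_1\vee\alpha_2$ where $\alpha_1 := \exists x_1\exists x_2\exists x_3\exists x_4(R(x_1,x_2)\wedge\neg R(x_3,x_4))$ and $\alpha_2 := \forall x_5\forall x_6 R(x_5,x_6)\wedge\exists x_7\exists x_8\neg R(x_7,x_8)$. This is in $\mathcal{F}$ and is satisfiable (any proper nonempty $R$ witnesses $\alpha_1$). Its prenex matrix is $\psi=(A_1\wedge\neg A_2)\vee(A_3\wedge\neg A_4)$ with all four atoms over $R$ and per-atom prefixes $\exists\exists,\exists\exists,\forall\forall,\exists\exists$. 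Every $\tau$ with $\psi[\tau]=T$ has either $\tau(A_1)=T,\tau(A_2)=F$ or $\tau(A_3)=T,\tau(A_4)=F$. In the first case $H_\tau$ contains $\exists x_3\exists x_4\neg R(x_3,x_4)$ and $\exists x_1\exists x_2 R(x_1,x_2)$ together with a universally quantified $R$-literal of one polarity or the other, which clashes with one of them; in the second case $H_\tau$ contains $\forall x_5\forall x_6 R(x_5,x_6)\wedge\exists x_7\exists x_8\neg R(x_7,x_8)$. Hence $H_\tau$ is unsatisfiable for every admissible $\tau$, and your algorithm rejects a satisfiable input. (Your converse correctness argument already hints at the trouble: from $\mathfrak{A}\not\models Q_A^{*}A$ you only get $\overline{Q_A^{*}}\neg A$ with the dual prefix, not $Q_A^{*}\neg A$.)

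The repair is to guess \emph{which disjuncts to use} rather than a polarity for every atom, which is what the paper does. Put the formula in negation normal form, push each quantifier inward past connectives with disjoint-variable operands until every quantifier sits directly above a literal; the result is a positive $\wedge/\vee$-combination of quantified literals $Q_{i_1}x_{i_1}\dots Q_{i_k}x_{i_k}\eta$. A monotone combination of such sentences is satisfiable iff some resolution of the disjunctions (keeping one disjunct at each $\vee$ and discarding the other) yields a satisfiable conjunction of a \emph{subset} of the quantified literals; that conjunction prenexes into a relational Herbrand sentence, to which Lemma \ref{HERBRANDEQUALITY} applies. The essential difference from your scheme is that discarded atoms impose no constraint at all, instead of a guessed quantified-literal constraint that can be spuriously inconsistent with the selected ones.
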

\begin{proof}
The lower bound follows by the fact that if $\varphi$ is a
propositional logic formula, we obtain an
equisatisfiable formula of $\mathcal{F}$ by replacing each
proposition symbol $p_i$ by the sentence $\forall x P_i(x)$.

We thus consider the upper bound.
Let $\chi \in \mathcal{F}$ be a formula. Start by transforming $\chi$ into negation normal form, thus obtaining a formula $\chi'$. Now note that in $\mathcal{F}$, the
formula $\forall x (\varphi \lor \psi)$ is equivalent to either $(\varphi \lor \psi)$, $(\forall x \varphi \lor \psi)$ or $(\varphi \lor \forall x \psi)$ since $Free(\varphi) \cap Free(\psi) = \varnothing$. Similarly, $\exists x (\varphi \wedge \psi)$ is equivalent to $(\varphi \wedge \psi)$, $(\exists x \varphi \wedge \psi)$ or $(\varphi \wedge \exists x \psi)$. Thus we can push all quantifiers past all
connectives in the formula $\chi'$ in polynomial time, getting a 
formula $\chi''$.

Consider now the following elementary trick.
Let $C$ be the set of all conjunctions obtained from $\chi''$ as
follows: begin from the syntax tree of $\chi''$ and keep eliminating
disjunctions $\vee$, always keeping one of the two disjuncts.
Now $\chi''$ is satisfiable iff some $\beta\in C$ is satisfiable.
Starting from $\chi''$, we nondeterministically guess some $\beta\in C$
(without constructing~$C$).

Now, $\beta$ is a 
conjunction of formulas $Q_1x_1\dots Q_kx_k\, \eta$
where $Q_i\in\{\forall,\exists\}$ for each $i$
and $\eta$ is a literal.
Putting $\beta$ in prenex normal form, we get a 
\emph{relational
Herbrand sentence}.
Lemma \ref{HERBRANDEQUALITY} in the appendix
proves that satisfiability of relational Herbrand sentences is
complete for \textsc{PTime}. 
\end{proof}

\begin{lemma}
$\mathrm{GRA}(e,p,s,\neg, J,\exists)$-terms translate 
to equisatisfiable formulas of $\mathcal{F}$ in polynomial time.
\end{lemma}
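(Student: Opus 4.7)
The plan is to induct on the structure of a term $\mathcal{T}$ of $\mathrm{GRA}(e,p,s,\neg,J,\exists)$, producing an equivalent (and hence equisatisfiable) formula $\varphi_{\mathcal{T}}\in \mathcal{F}$ of polynomial size. The key observation is that since the identification operator $I$ is absent, no mechanism in the term language can force two coordinate positions of a term to be equal. Consequently, one can maintain throughout the induction the invariant that every subterm of arity $k$ translates to a formula whose free-variable list is a tuple of $k$ pairwise distinct variable symbols. This invariant is precisely what certifies membership in $\mathcal{F}$, since it guarantees that the two operands of every $J$-node have disjoint free-variable sets, which is the only syntactic restriction imposed on conjunction in $\mathcal{F}$.

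First I would do a single linear-time preprocessing pass over the syntax tree of $\mathcal{T}$ that allocates to each atomic occurrence (each leaf labelled $e$ or a relation symbol $R$) a block of fresh variable indices, arranged so that the two operands of every $J$-node receive disjoint blocks. This uses only $O(|\mathcal{T}|)$ variable symbols in total. The inductive translation is then forced: $e$ and $R$ translate to $v_i = v_j$ and $R(v_{i_1},\dots,v_{i_k})$ with the preallocated indices; the permutation operators $p$ and $s$ act by permuting the free-variable tuple of $\varphi_{\mathcal{T}}$ with no change in formula size; $\neg\mathcal{T}$ translates to $\neg\varphi_{\mathcal{T}}$; $\exists\mathcal{T}$ translates to $\exists v\, \varphi_{\mathcal{T}}$ with $v$ the last free variable of $\varphi_{\mathcal{T}}$; and $J(\mathcal{T},\mathcal{S})$ translates to $\varphi_{\mathcal{T}}\wedge \varphi_{\mathcal{S}}$, a legal $\mathcal{F}$-conjunction thanks to the preprocessing.

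Semantic equivalence with $\mathcal{T}^{\mathfrak{A}}$ at every model is exactly as in the proof of Theorem \ref{equiexpressivitytheorem}, since each clause above mirrors the semantic definition of the corresponding operator; in particular, $J$ was set up precisely to correspond to conjunction of formulas with disjoint free variables. Each inductive step adds at most a constant amount of new syntax, so the final formula has size polynomial in $|\mathcal{T}|$, and the construction runs in polynomial time. The main subtlety is pure bookkeeping: one must verify at each of the six inductive cases that the distinct-free-variables invariant is preserved, and this invariant would fail in exactly one place, namely for $I$, if that operator were present. Once the invariant is secured, membership in $\mathcal{F}$ and the polynomial size bound are both immediate, so the lemma is essentially a direct consequence of the absence of $I$ combined with the standard term-to-formula translation.
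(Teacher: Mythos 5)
Your proof is correct and follows essentially the same route as the paper's: a structural induction in which each $k$-ary term is translated to a formula with $k$ pairwise distinct free variables, the two operands of every $J$-node receiving disjoint variable sets so that the conjunction is legal in $\mathcal{F}$. The only (immaterial) difference is that you allocate disjoint variable blocks to the leaves in a preprocessing pass, whereas the paper normalizes every $k$-ary term to the variables $v_1,\dots,v_k$ and renames the second conjunct's variables to $v_{k+1},\dots,v_{k+\ell}$ at each $J$-node.
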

\begin{proof}
5
    We use induction on the structure of
    terms $\mathcal{T}$ of $\mathrm{GRA}(e,p,s,\neg, J,\exists)$.
    We translate every $k$-ary term to a formula $\chi(v_1,\dots ,v_k)$, so
    the free variables are precisely $v_1,\dots , v_k$. For the base case we note that $e$ is
    equivalent to $v_1 = v_2$ and $R$ to $R(v_1,\dots , v_k)$.
    Suppose then that $\mathcal{T}$ is
    equivalent to $\varphi(v_1,\dots ,v_k)$. 
    Then $\neg \mathcal{T}$ is equivalent to $\neg \varphi(v_1,\dots,v_k)$
    and $\exists \mathcal{T}$ to $\exists v_k \varphi(v_1,\dots,v_k)$. 
    We translate $s\mathcal{T}$
    to  the variant of $\varphi(v_1,\dots ,v_{k})$ that swaps $v_{k-1}$
    and $v_{k}$ and $p\mathcal{T}$ 
    to $\varphi(v_2,\dots ,v_k,v_1)$.
    Finally, suppose that $\mathcal{T}$ translates to
    $\varphi(v_1,\dots ,v_k)$ and $\mathcal{P}$ to $\psi(v_1,\dots ,v_{\ell})$.
    Now $J(\mathcal{T},\mathcal{P})$ is
    translated to $$\varphi(v_1,\dots ,v_k)
    \wedge \psi(v_{k+1},\dots ,v_{k+\ell}).$$
This concludes the proof.
\end{proof}

\begin{lemma}
The satisfiability problem of $\mathrm{GRA}(\neg,J,\exists)$ is \textsc{NP}-hard.
\end{lemma}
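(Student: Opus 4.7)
The plan is to give a polynomial-time reduction from propositional SAT to the satisfiability problem of $\mathrm{GRA}(\neg,J,\exists)$, exploiting the fact that the full relational vocabulary $\hat\tau$ contains nullary relation symbols (the paper states it has countably infinitely many symbols of every arity $k\geq 0$). The key observation is that on \emph{nullary} arguments the operators $\neg$ and $J$ act exactly as propositional negation and conjunction: in any model $\mathfrak{A}$ a nullary relation symbol is interpreted as either $\top_0$ or $\bot_0$; the operator $\neg$ swaps these two values; and by the definition of join, $J(\mathcal{T},\mathcal{S})^{\mathfrak{A}}=\top_0$ precisely when both $\mathcal{T}^{\mathfrak{A}}$ and $\mathcal{S}^{\mathfrak{A}}$ equal $\top_0$, and equals $\bot_0$ otherwise.

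Given a propositional formula $\varphi$ over variables $p_1,\dots,p_n$, I would fix fresh nullary relation symbols $P_1,\dots,P_n$ and define a translation $\varphi\mapsto\mathcal{T}(\varphi)$ recursively by sending each $p_i$ to $P_i$, each $\neg\psi$ to $\neg\mathcal{T}(\psi)$, each $\psi\wedge\theta$ to $J(\mathcal{T}(\psi),\mathcal{T}(\theta))$, and (if disjunction is present in the input) $\psi\vee\theta$ to $\neg J(\neg\mathcal{T}(\psi),\neg\mathcal{T}(\theta))$. This produces a nullary term whose size is linear in $|\varphi|$, so the reduction is clearly polynomial.

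Correctness is then a straightforward induction on the structure of $\varphi$. For any model $\mathfrak{A}$ whose vocabulary contains $P_1,\dots,P_n$, define a propositional assignment $v$ by $v(p_i)=1$ iff $P_i^{\mathfrak{A}}=\top_0$. The observations above immediately give $\mathcal{T}(\varphi)^{\mathfrak{A}}=\top_0$ iff $v\models\varphi$. Conversely, every propositional assignment $v$ is realized by taking any nonempty domain $A$ and setting $P_i^{\mathfrak{A}}=\top_0$ precisely when $v(p_i)=1$. Hence $\varphi$ is propositionally satisfiable iff $\mathcal{T}(\varphi)$ is satisfiable, which yields the \textsc{NP}-hardness lower bound.

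There is really no significant obstacle here: the proof is essentially routine once one notices that nullary relation symbols are available and that they allow one to internalize propositional reasoning without needing any of the variable-handling operators $e,p,s,I$. In fact the operator $\exists$ is not used at all in the construction, so the same argument actually yields \textsc{NP}-hardness of the even smaller system $\mathrm{GRA}(\neg,J)$; this mirrors the reduction in Proposition \ref{whateverproposition}, but avoids the need for $I$ to force variable identifications by shifting from unary to nullary encoding of propositional atoms.
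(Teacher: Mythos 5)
Your reduction is correct, but it takes a genuinely different route from the paper's. The paper encodes each proposition symbol $p_i$ by a \emph{unary} relation symbol $P_i$ and represents $p_i$ as the sentence $\forall x\, P_i(x)$, which becomes the nullary term $\neg\exists\neg P_i$; conjunction and negation are then handled by $J$ and $\neg$ exactly as in your construction. That version makes essential use of $\exists$. You instead exploit the fact that the full relational vocabulary $\hat{\tau}$ contains nullary relation symbols, on which $\neg$ and $J$ literally are propositional negation and conjunction (the paper's clauses for $\neg$ and $J$ on arity-$0$ AD-relations confirm this, and nullary symbols are used elsewhere in the paper, e.g.\ in the $\mathrm{FO}^2$ translation). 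Your argument is therefore sound within the paper's framework, is slightly more elementary, and yields the strictly stronger conclusion that already $\mathrm{GRA}(\neg,J)$ is \textsc{NP}-hard --- a fact the paper does not state and which is worth noting, since it shows the hardness has nothing to do with quantification or variable identification. The trade-off is that your proof hinges entirely on the convention that arity-$0$ symbols are admitted; under a convention restricting vocabularies to arity $\geq 1$ (common in some treatments), your reduction collapses while the paper's unary encoding survives. Both proofs are polynomial-time and both correctness arguments are routine inductions, so this is purely a difference in which feature of the algebra carries the propositional structure.
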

\begin{proof}
We give a simple reduction from SAT (which is similar to the reduction in Lemma \ref{flemma}). Let $\varphi$ be a formula of
propositional logic. Let $\{p_1,\dots ,p_n\}$ be the set of proposition symbols in $\varphi$,
and let $\{P_1,\dots ,P_n\}$ be a set of unary
relation symbols. Let $\varphi^*$ be the formula
obtained from $\varphi$ by replacing each symbol $p_i$ with $\forall x P_i(x)$. It is
easy to see that $\varphi$ and $\varphi^*$ are
equisatisfiable. Finally, since $\forall x P_i(x)$ is equivalent to $\neg \exists \neg P_i$, we see that the 
sentence $\varphi^*$ can be expressed in $\mathrm{GRA}(\neg, J, \exists)$.
\end{proof}

Thereby we have now finally proved the
following result for $\mathrm{GRA}$
without $I$.

\begin{corollary}
The satisfiability
problem of $\mathrm{GRA}(e,p,s,\neg, J ,\exists)$ is \textsc{NP}-complete.
\end{corollary}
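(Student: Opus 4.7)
The plan is simply to chain together the three lemmas immediately preceding the corollary. For the upper bound, I would take an arbitrary term $\mathcal{T}$ of $\mathrm{GRA}(e,p,s,\neg,J,\exists)$ and invoke the polynomial translation from such terms to equisatisfiable formulas of $\mathcal{F}$ (the second lemma). Since Lemma~\ref{flemma} gives that satisfiability of $\mathcal{F}$ lies in \textsc{NP}, composing the polynomial translation with the \textsc{NP} decision procedure for $\mathcal{F}$ yields an \textsc{NP} procedure for satisfiability of $\mathrm{GRA}(e,p,s,\neg,J,\exists)$.

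For the lower bound, I would observe that $\mathrm{GRA}(\neg,J,\exists)$ is syntactically a subsystem of $\mathrm{GRA}(e,p,s,\neg,J,\exists)$, so any instance of the satisfiability problem of the former is also an instance of the satisfiability problem for the latter (with identical intended semantics, since the operator interpretations are the same). The third lemma shows that satisfiability for $\mathrm{GRA}(\neg,J,\exists)$ is already \textsc{NP}-hard, so this inclusion immediately transfers the hardness.

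No step here looks like a genuine obstacle; the real work was already discharged in the three lemmas. The only potential pitfall worth explicitly noting is to make sure that the notion of satisfiability is consistent between the sub- and super-algebra (i.e., that a term's being satisfiable does not depend on which ambient operator set we regard it as belonging to), which is clear from the definition: satisfiability of a term $\mathcal{T}$ only refers to the existence of a model $\mathfrak{A}$ with $\mathcal{T}^{\mathfrak{A}}$ non-empty in its arity, and this is independent of the operator signature. Hence the corollary follows.
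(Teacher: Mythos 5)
Your proposal is correct and follows essentially the same route as the paper: the corollary is stated there as an immediate consequence of the three preceding lemmas, with the upper bound obtained by composing the polynomial translation into $\mathcal{F}$ with the \textsc{NP} procedure for $\mathcal{F}$, and the lower bound inherited from the \textsc{NP}-hardness of the subsystem $\mathrm{GRA}(\neg,J,\exists)$. Your explicit remark that satisfiability of a term does not depend on the ambient operator signature is a sound (if routine) point that the paper leaves implicit.
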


\section{Undecidable fragments of GRA}

In this section we identify undecidable 
subsystems of $\mathrm{GRA}$. We 
begin with $\mathrm{GRA}$ without $e$.

\begin{proposition}\label{nouproposition}
The satisfiability problem of $\mathrm{GRA}(p, s, I,
\neg, J, \exists)$ is $\Pi_1^0$-complete.
\end{proposition}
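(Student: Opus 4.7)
The plan is to establish the result by showing that $\mathrm{GRA}(p,s,I,\neg,J,\exists)$ is equiexpressive with equality-free first-order logic, and then invoking the classical fact that satisfiability of equality-free $\mathrm{FO}$ is $\Pi_1^0$-complete. The upper bound direction ($\Pi_1^0$) is then immediate: by G\"odel's completeness theorem, validity of (equality-free) $\mathrm{FO}$ is recursively enumerable, and satisfiability of a term (or formula) is the complement under negation of the validity of the corresponding negation, hence co-r.e. The $\Pi_1^0$ lower bound for equality-free $\mathrm{FO}$ is a standard strengthening of Church's undecidability theorem, obtained by axiomatizing a fresh binary relation as a congruence compatible with all other symbols of the vocabulary, which reduces satisfiability of $\mathrm{FO}$ with equality to satisfiability of equality-free $\mathrm{FO}$.

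For the equiexpressivity claim, I would adapt the proof of Theorem \ref{equiexpressivitytheorem}. Inspecting that proof, the only place where $e$ was needed in translating from $\mathrm{FO}$ to the algebra was in handling equality atoms $x=x$ and $x=y$, which became $Ie$ and $e$, respectively. Since equality-free $\mathrm{FO}$ has no such atoms, the remainder of the translation (relational atoms via $p,s,I$; Boolean connectives via $\neg$ and $J$; quantifiers via $\exists$, possibly composed with $p$) goes through unchanged and produces terms of $\mathrm{GRA}(p,s,I,\neg,J,\exists)$. For the converse direction, a routine induction on term structure shows that every term of $\mathrm{GRA}(p,s,I,\neg,J,\exists)$ translates back to an equality-free formula; the relation-symbol terms produce relational atoms, the operators $p$ and $s$ permute the variable tuple, $\neg$, $J$ and $\exists$ become negation, conjunction and existential quantification, and the operator $I$ is handled by the observation given below.

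The delicate point, and the main obstacle to watch for, is that the operator $I$ is defined semantically via an equality check on the last two coordinates of a tuple, so one might worry that translations of terms using $I$ covertly introduce equality atoms. The resolution is that applying $I$ to a term equivalent to $\varphi(x_1,\dots,x_k)$ yields a term equivalent to $\varphi(x_1,\dots,x_{k-1},x_{k-1})$, i.e.\ a pure syntactic variable substitution that introduces no new equality atom. With this observation in place, no equality symbol ever appears in the output of the induction, the equiexpressivity is established, and the $\Pi_1^0$-completeness transfers to $\mathrm{GRA}(p,s,I,\neg,J,\exists)$.
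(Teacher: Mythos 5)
Your proposal is correct and follows essentially the same route as the paper, which likewise observes (by inspecting the proof of Theorem \ref{equiexpressivitytheorem}) that $\mathrm{GRA}(p,s,I,\neg,J,\exists)$ is equiexpressive with equality-free $\mathrm{FO}$ and then cites the well-known $\Pi_1^0$-completeness of that fragment. Your elaboration of the key point---that $I$ acts as a variable substitution and hence introduces no equality atoms in the back-translation---is exactly the observation the paper relies on (cf.\ the proofs of Corollary \ref{atomcorollary} and the quantifier-free characterization).
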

\begin{proof}
Analyzing the proof that $\mathrm{GRA}$ 
corresponds to $\mathrm{FO}$, we easily observe that $\mathrm{GRA}(p, s, I,\neg, J, \exists)$ is
equiexpressive with the equality-free fragment of $\mathrm{FO}$ (which is
well known to be $\Pi_1^0$-complete).
\end{proof}

We then consider $\mathrm{GRA}$ without $s$.
To this end, we will
show that the satisfiability problem of $\mathrm{GRA}(p,I,\neg,J,\exists)$ is $\Pi_1^0$-complete.
%
Let us begin by recalling the \textbf{tiling problem} for $\mathbb{N}\times\mathbb{N}$.
A \textbf{tile} is a function $t:\{R,L,T,B\}\rightarrow C$ where $C$ is a countably infinite set of colors.
We let $t_X$ denote $t(X)$. Intuitively, $t_R,t_L,t_T$ and $t_B$
correspond to the colors of the right,
left, top and bottom edges of a tile.
Now, let $\mathbb{T}$ be a finite set of tiles. A \textbf{$\mathbb{T}$-tiling} of $\mathbb{N}\times \mathbb{N}$ is a function $f:\mathbb{N}\times \mathbb{N} \rightarrow \mathbb{T}$ such that for all $i,j\in\mathbb{N}$, we
have $t_R = t'_L$ when $f(i,j) = t$ and $f(i+1,j) = t'$, and
similarly, $t_T = t'_B$ when $f(i,j) = t$ and $f(i,j+1) = t'$.
Intuitively, the right color of each tile equals the left color of its right
neighbour, and analogously for top and bottom colors.
%
%
%
%
%
%
The tiling problem for the grid $\mathbb{N}\times \mathbb{N}$ asks,
with the input of a finite set $\mathbb{T}$ of tiles, if
there exists a $\mathbb{T}$-tiling of $\mathbb{N}\times \mathbb{N}$. It is well known that this problem is $\Pi_1^0$-complete. We will show that the satisfiability problem for $\mathrm{GRA}(p,I,\neg,J,\exists)$ is
undecidable by reducing the tiling problem to it.

Define the \textbf{standard grid} $\mathfrak{G}_{\mathbb{N}}
:= (\mathbb{N} \times \mathbb{N}, R, U)$ where we have
$R = \{((i,j),(i+1,j)) \mid i,j \in \mathbb{N}\}$ and
$U = \{((i,j),(i,j+1)) \mid i,j \in \mathbb{N}\}$.
%
%
%
%
%
%
If $\mathfrak{G}$ is a structure of the
vocabulary $\{R,U\}$ with binary relation symbols $R$ and $U$, 
then $\mathfrak{G}$ is
\textbf{grid-like} if there
is a homomorphism
$\tau: \mathfrak{G}_{\mathbb{N}} \rightarrow \mathfrak{G}$.
Consider then the extended vocabulary $\{R,U,L,D\}$ where $L$ and $D$ are
binary. Define

\medskip

\noindent 
$\varphi_{inverses}    : =  \forall x \forall y (R(x,y) \leftrightarrow L(y,x))$\\ \smallskip
                                       $\text{ }$\hspace{2.3cm}$\wedge\ \forall x\forall y(U(x,y) \leftrightarrow D(y,x))$\\  
\smallskip
\noindent
$\varphi_{successor}     : = \forall x (\exists y R(x,y) \wedge \exists y U(x,y))$\\
\smallskip
\scalebox{0.95}[1]{$\varphi_{cycle}    : =\ \ \forall x\forall y\forall z \forall u [(L(y,x)
\wedge U(x,z) \wedge R(z,u))
      \rightarrow D(u,y)].$}

\medskip

\smallskip

\noindent
Then define $\Gamma := \varphi_{inverses}
\wedge \varphi_{successor}
\wedge\varphi_{cycle}$. 
The intended model of $\Gamma$ is the
standard grid $\mathfrak{G}_{\mathbb{N}}$
extended with two binary relations, $L$
pointing left and $D$ pointing down.

\begin{lemma}\label{undeckey}
Let $\mathfrak{G}$ be a structure of the vocabulary $\{R,U,L,D\}$.
Suppose $\mathfrak{G}$ satisfies $\Gamma$. Then there exists a homomorphism from $\mathfrak{G}_{\mathbb{N}}$ to
$\mathfrak{G} \upharpoonright \{R,U\}$, i.e,. to the
restriction of $\mathfrak{G}$ to
the vocabulary $\{R,U\}$.
\end{lemma}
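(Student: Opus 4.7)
The plan is to construct the homomorphism $\tau : \mathbb{N}\times\mathbb{N} \to G$ by induction on $i+j$, using $\varphi_{successor}$ to keep producing successors and $\varphi_{cycle}$ to ensure that the square cells close up correctly. The induction hypothesis at stage $n$ will be that $\tau$ has been defined on $\{(i,j) : i+j \le n\}$ so that whenever $(i,j)$ and a neighbour of it (either $(i+1,j)$ or $(i,j+1)$) are both in the domain, the corresponding relation ($R$ or $U$) holds of their images in $\mathfrak{G}$.

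The base case is to pick any $a\in G$ and set $\tau(0,0):=a$. For the inductive step, I would handle the axes separately: given $\tau(i,0)$, use $\varphi_{successor}$ to pick an arbitrary $R$-successor and name it $\tau(i+1,0)$; symmetrically for $\tau(0,j+1)$. This takes care of all new points where one coordinate is $0$. For the remaining new points of the form $(i+1,j+1)$ with $i,j\ge 0$, the three points $\tau(i,j)$, $\tau(i+1,j)$, $\tau(i,j+1)$ are already defined, and by hypothesis $R(\tau(i,j),\tau(i+1,j))$ and $U(\tau(i,j),\tau(i,j+1))$ hold. Apply $\varphi_{successor}$ to $\tau(i,j+1)$ to pick some $R$-successor $u$, and define $\tau(i+1,j+1):=u$.

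The main obstacle, and the reason $\varphi_{cycle}$ is needed, is to show that this $u$ is automatically a $U$-successor of $\tau(i+1,j)$, which is what the induction hypothesis demands. To see this, set $x:=\tau(i,j)$, $y:=\tau(i+1,j)$, $z:=\tau(i,j+1)$. From $R(x,y)$ and $\varphi_{inverses}$ we obtain $L(y,x)$; we also have $U(x,z)$ and $R(z,u)$; so $\varphi_{cycle}$ yields $D(u,y)$, and $\varphi_{inverses}$ then gives $U(y,u)$, i.e., $U(\tau(i+1,j),\tau(i+1,j+1))$, as required. Thus the induction goes through and $\tau$ is a homomorphism from $\mathfrak{G}_{\mathbb{N}}$ to $\mathfrak{G}\upharpoonright\{R,U\}$.

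One subtle point worth flagging: the construction relies essentially on the axiom of (dependent) choice, since at each square we pick an arbitrary $R$-successor from possibly many; but this is routine in this setting. No conditions beyond the three conjuncts of $\Gamma$ are invoked, which matches the fact that $\Gamma$ is designed precisely to encode confluence of the $R$- and $U$-successor steps.
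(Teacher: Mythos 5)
Your proof is correct and follows essentially the same route as the paper: the paper first derives the auxiliary sentence $\forall x\forall y\forall z\forall u[(R(x,y)\wedge U(x,z)\wedge R(z,u))\rightarrow U(y,u)]$ from $\varphi_{inverses}$ and $\varphi_{cycle}$ and then performs the same inductive construction, whereas you inline that derivation into the inductive step. The details you supply (induction on $i+j$, separate treatment of the axes, the $L$/$D$ bookkeeping) are exactly what the paper leaves implicit.
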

\begin{proof}
As $\mathfrak{G}$ satisfies $\varphi_{inverses}$ and $\varphi_{cycle}$, it is easy to see that $\mathfrak{G}$ satisfies the sentence
\begin{align*}\varphi_{grid-like}\ := & \\
\forall x\forall y\forall z \forall u & [(R(x,y) \wedge U(x,z) \wedge R(z,u))\rightarrow U(y,u)].
\end{align*}
\noindent
Using this sentence and $\varphi_{successor}$, it is
easy to inductively
construct a homomorphism
from $\mathfrak{G}_{\mathbb{N}}$ to $\mathfrak{G}\upharpoonright \{R,U\}$.
\end{proof}

The sentence $\varphi_{grid-like}$ used above reveals the 
\emph{key trick} in our argument towards proving
undecidability of $\mathrm{GRA}(p,I,\neg,J,\exists)$. 
The sentence $\varphi_{grid-like}$ would be the natural 
choice for our argument 
rather than $\varphi_{cycle}$. Indeed, we
could replace $\Gamma = \varphi_{inverses}
\wedge\varphi_{successor}\wedge\varphi_{cycle}$ in the 
statement of Lemma \ref{undeckey} by 
$\varphi_{successor}
\wedge \varphi_{grid-like}$, as the proof of the
lemma shows. But translating $\varphi_{grid-like}$ to $\mathrm{GRA}(p,I,\neg,J,\exists)$ 
becomes challenging due to the arrangement of the variables and the lack of $s$ in the algebra.
We solve this issue by using  $\varphi_{cycle}$ 
instead of $\varphi_{grid-like}$. By extending the vocabulary, we
can formulate $\varphi_{cycle}$ so that the variables in it
occur in a cyclic order. The proof of Theorem \ref{undecnoswap} below demonstrates 
that---indeed---by using this cyclicity, we can
express $\varphi_{cycle}$ in $\mathrm{GRA}(p,I,\neg,J,\exists)$ even
though it lacks the swap operator~$s$.

Fix a set of tiles $\mathbb{T}$. We simulate the
tiles $t\in \mathbb{T}$ by unary relation symbols $P_t$.
Let $\varphi_{\mathbb{T}}$
be the conjunction of the following
four sentences (the
second one could be dropped):

\medskip

\smallskip

$\forall x \bigvee_{t\in \mathbb{T}} P_t(x),$

\smallskip

$\bigwedge_{t\neq t'} \forall x  \neg (P_t(x) \wedge P_{t'}(x)),$

\smallskip

$\bigwedge_{t_{R} \neq t'_{L}} \forall x \forall y
\neg (P_t(x) \wedge R(x,y) \wedge P_{t'}(y)),$

\smallskip

%
$\bigwedge_{t_{T} \neq t'_{B}} \forall x
\forall y \neg (P_t(x) \wedge U(x,y) \wedge P_{t'}(y)).$

\medskip

\smallskip

\noindent
Now $\varphi_{\mathbb{T}}$ expresses that $\mathbb{N}\times \mathbb{N}$ is $\mathbb{T}$-tilable:

\begin{lemma}
$\mathbb{N}\times \mathbb{N}$ is $\mathbb{T}$-tilable
iff\, $\varphi_{\mathbb{T}}\wedge \Gamma$ is satisfiable.
\end{lemma}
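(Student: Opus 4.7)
The plan is to prove both directions by constructing explicit witnesses, leveraging Lemma \ref{undeckey} for the harder direction.

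For the forward direction, suppose $f : \mathbb{N}\times\mathbb{N} \to \mathbb{T}$ is a $\mathbb{T}$-tiling. I would build a model $\mathfrak{G}$ directly over $\mathbb{N}\times\mathbb{N}$: interpret $R$ and $U$ as in $\mathfrak{G}_{\mathbb{N}}$, interpret $L$ and $D$ as the converses of $R$ and $U$, and set $P_t^{\mathfrak{G}} = \{(i,j) \mid f(i,j) = t\}$. Then $\varphi_{inverses}$ holds by definition of $L,D$; $\varphi_{successor}$ is immediate because every grid point $(i,j)$ has $R$-successor $(i+1,j)$ and $U$-successor $(i,j+1)$; and $\varphi_{cycle}$ follows because the tuple $(y,x,z,u) = ((i,j),(i+1,j),(i+1,j+1),(i,j+1))$ is the only kind of witness to the antecedent, and the closing edge $D(u,y)$ exists since $(i,j+1)$ is indeed above $(i,j)$. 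The four conjuncts of $\varphi_{\mathbb{T}}$ hold because $f$ assigns a unique tile to every grid point and the tiling constraints on $R$-adjacent and $U$-adjacent pairs translate directly to the third and fourth conjuncts.

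For the reverse direction, assume $\mathfrak{G} \models \varphi_{\mathbb{T}} \wedge \Gamma$. By Lemma \ref{undeckey}, we have a homomorphism $\tau : \mathfrak{G}_{\mathbb{N}} \to \mathfrak{G}\upharpoonright\{R,U\}$. Define $f(i,j)$ to be the unique $t\in\mathbb{T}$ such that $P_t^{\mathfrak{G}}(\tau(i,j))$ holds; existence and uniqueness of such $t$ are guaranteed by the first two conjuncts of $\varphi_{\mathbb{T}}$. To verify that $f$ is a valid tiling, suppose $f(i,j) = t$ and $f(i+1,j) = t'$. Since $\tau$ is a homomorphism, $R(\tau(i,j),\tau(i+1,j))$ holds in $\mathfrak{G}$, together with $P_t(\tau(i,j))$ and $P_{t'}(\tau(i+1,j))$. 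The third conjunct of $\varphi_{\mathbb{T}}$ then forces $t_R = t'_L$. The argument for $U$-adjacency and $t_T = t'_B$ is symmetric, using the fourth conjunct.

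Both directions are essentially bookkeeping given the setup, so there is no serious obstacle — the real work was already done in Lemma \ref{undeckey}, which extracted a grid-like structure from models of $\Gamma$ despite the absence of the swap operator. The only mild care needed is to note that in the forward direction we must actually verify $\varphi_{cycle}$ rather than $\varphi_{grid-like}$; this is why we chose to work with the grid $\mathbb{N}\times\mathbb{N}$ where the cyclic closure of a unit square is geometrically evident.
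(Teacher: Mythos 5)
Your proof is correct and follows essentially the same route as the paper: the reverse direction invokes Lemma \ref{undeckey} to extract a homomorphism and reads the tiling off the $P_t$'s, and the forward direction expands the standard grid in the obvious way (a step the paper simply asserts as clear). One small slip: in your verification of $\varphi_{cycle}$ the witness tuple is mislabelled --- since $L(y,x)$ holds iff $R(x,y)$ does, the antecedent forces $y=(i+1,j)$, $z=(i,j+1)$ and $u=(i+1,j+1)$ when $x=(i,j)$, not the assignment you wrote; the conclusion $D(u,y)$, i.e.\ $U(y,u)$, still holds for the correctly labelled tuple, so nothing breaks.
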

\begin{proof}
Suppose there is a model $\mathfrak{G}$ so that $\mathfrak{G}\models \varphi_{\mathbb{T}}\wedge\Gamma$.
Therefore, by Lemma \ref{undeckey}, there exists a homomorphism $\tau: \mathfrak{G}_{\mathbb{N}} \rightarrow \mathfrak{G} \upharpoonright \{R,U\}$. Define a tiling $T$ of $\mathbb{N}\times \mathbb{N}$ by setting $T((i,j)) = t$ if $\tau((i,j)) \in P_t$. Since $\mathfrak{G}\models \varphi_{\mathbb{T}}$ and $\tau$ is homomorphism, the tiling is well-defined and correct.

Now suppose that there is a tiling $T$ of $\mathbb{N}\times \mathbb{N}$ using $\mathbb{T}$. Thus we can expand $\mathfrak{G}_\mathbb{N} = (\mathbb{N}\times \mathbb{N}, R, U)$ to $\mathfrak{G}'_\mathbb{N} = (\mathbb{N}\times \mathbb{N}, R, U, L, D, (P_t)_{t\in \mathbb{T}})$ in the obvious way. Clearly $\mathfrak{G}'_\mathbb{N} \models \varphi_{\mathbb{T}}\wedge\Gamma$.
\end{proof}

We are now ready to prove the following theorem.

\begin{theorem}\label{undecnoswap}
The satisfiability problem of $\mathrm{GRA}(p,I,\neg,J,\exists)$ is $\Pi_1^0$-complete.
\end{theorem}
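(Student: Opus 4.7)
The proof splits into the two bounds. The upper bound is immediate: $\mathrm{GRA}(p,I,\neg,J,\exists)$ is a subsystem of $\mathrm{GRA}$, which by Theorem~\ref{equiexpressivitytheorem} is equiexpressive with $\mathrm{FO}$, and satisfiability of $\mathrm{FO}$ lies in $\Pi_1^0$ (since validity is $\Sigma_1^0$-complete).

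For the lower bound my plan is to reduce from the $\Pi_1^0$-hard $\mathbb{N}\times\mathbb{N}$-tiling problem, using the encoding already prepared. By the preceding two lemmas, $\mathbb{T}$ tiles $\mathbb{N}\times\mathbb{N}$ iff the $\mathrm{FO}$-sentence $\varphi_{\mathbb{T}}\wedge\Gamma$ is satisfiable, so it suffices to translate $\varphi_{\mathbb{T}}\wedge\Gamma$ into an equivalent $0$-ary term of $\mathrm{GRA}(p,I,\neg,J,\exists)$. I will treat each conjunct of $\varphi_{\mathbb{T}}\wedge\Gamma$ separately and then combine the resulting $0$-ary terms by $J$, which on $0$-ary inputs behaves as conjunction. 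Two observations make most of the translation mechanical: first, on binary relations the cyclic permutation $p$ already realises a swap, so $pR$ yields the converse of a binary $R$; second, binary and unary intersection are expressible by the short compositions $p\,I\,p\,I\,p\,J(R,pS)$ and $I\,J(P,Q)$ respectively, as a direct calculation shows. With these auxiliary operations together with $\neg$ and $\exists$, the conjuncts $\varphi_{inverses}$, $\varphi_{successor}$ and the four parts of $\varphi_{\mathbb{T}}$ translate routinely, since each of them is a $\neg\exists\ldots\exists$-statement over a Boolean combination of unary or binary atoms sharing at most one variable at a time.

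The crux, and main obstacle, is the translation of $\varphi_{cycle} \equiv \neg\exists x\exists y\exists z\exists u\,[\,L(y,x)\wedge U(x,z)\wedge R(z,u)\wedge\neg D(u,y)\,]$. The difficulty is that $I$ identifies only the last two coordinates of a tuple and cyclic permutations $p^k$ preserve the cyclic order of coordinates, so in any given term only two coordinates that are already cyclically adjacent can be identified. What makes the translation possible is precisely the cyclic arrangement $y\!-\!x\!-\!z\!-\!u\!-\!y$ of shared variables along the four atoms of $\varphi_{cycle}$, which is also the reason why Lemma~\ref{undeckey} is phrased using $\varphi_{cycle}$ rather than the more natural $\varphi_{grid-like}$. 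My plan is to build the body incrementally: set $T_1 := I\,p\,J(L,U)$, a $3$-ary term representing $L(y,x)\wedge U(x,z)$; then $T_2 := I\,p\,J(p^2 T_1,R)$, a $4$-ary term that also incorporates $R(z,u)$; and finally $T_4 := I\,p^3\,I\,p\,J(p^3 T_2,\neg D)$, a $4$-ary term capturing the entire body. At every $I$-step the preceding cyclic shift $p^k$ will have placed the two coordinates to be identified into the last two positions, and the cyclicity of the atom pattern guarantees that such a $k$ always exists. Four applications of $\exists$ and an outer $\neg$ then yield the desired $0$-ary term for $\varphi_{cycle}$, which I combine with the previous translations via $J$ to complete the reduction and hence the $\Pi_1^0$-hardness proof.
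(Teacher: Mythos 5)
Your proposal is correct and follows essentially the same route as the paper: the upper bound by containment in $\mathrm{FO}$, and the lower bound by translating $\varphi_{\mathbb{T}}\wedge\Gamma$ into $\mathrm{GRA}(p,I,\neg,J,\exists)$, with the decisive observation---identical to the paper's---that the cyclic arrangement of the variables in $\varphi_{cycle}$ lets $p$ and $I$ simulate the required identifications without $s$. Your explicit incremental construction of the term for $\varphi_{cycle}$ (the terms $T_1$, $T_2$, $T_4$) and the intersection gadgets $pIpIpJ(R,pS)$ and $IJ(P,Q)$ are just a more detailed working-out of the steps the paper only sketches.
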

\begin{proof}
The upper bound follows by $\mathrm{GRA}(p,I,\neg,J,\exists)$
being contained in $\mathrm{FO}$. For the lower
bound, we will establish that $\varphi_{\mathbb{T}}$ and
each sentence in $\Gamma$ can be expressed in $\mathrm{GRA}(p,I,\neg,J,\exists)$.
%

Now, note that $\varphi_{inverses}$ is
equivalent to the conjunction of the four sentences

\smallskip

       $\forall x \forall y (R(x,y)\rightarrow L(y,x)) \wedge \forall x \forall y (L(y,x)\rightarrow R(x,y)),$  

       $\forall x \forall y (U(x,y)\rightarrow D(y,x)) \wedge \forall x \forall y (D(y,x)\rightarrow U(x,y)).$
       
\smallskip

\noindent
Let us show how to express $\forall x \forall y (R(x,y)\rightarrow L(y,x))$ in $\mathrm{GRA}(p,I,\neg,J,\exists)$;
the other conjuncts are treated similarly.
Consider the formula
$R(x,y)\rightarrow L(y,x)$.
To express this, consider first the formula
$\psi := R(x,y)\rightarrow L(z,u)$
which can be expressed by the term $\mathcal{T} = \neg J (R,\neg L)$.
Now, to make $\psi$ equivalent to $R(x,y)\rightarrow L(y,x)$, we
could first write $y = z \wedge x = u \wedge \psi$ and then existentially
quantify $z$ and $u$ away. On the algebraic side, an essentially
corresponding trick is done by
transitioning from $\mathcal{T}$ first to $I p \, (\mathcal{T})$
and then to $I p I p \, (\mathcal{T})$ and 
finally reordering this by $p$, i.e., going to $ p I p I p \, (\mathcal{T})$.
This term is
equivalent to $R(x,y) \rightarrow L(y,x)$.
%
%
%
Therefore the sentence
$\forall x \forall y (R(x,y)\rightarrow L(y,x))$
is equivalent to $\forall \forall
p I p I p\, \mathcal{T}$ where $\forall = \neg \exists \neg$.


Consider then the formula $\varphi_{cycle} =\ \
\forall x\forall y\forall z \forall u [(L(y,x)
\wedge U(x,z) \wedge R(z,u)) \rightarrow D(u,y)]$.
In the quantifier-free part, the variables occur in a
cyclic fashion, but with repetitions. We first 
translate the repetition-free variant $(L(v_1,v_2)
\wedge U(v_3,v_4) \wedge R(v_5,v_6)) \rightarrow D(v_7,v_8)$ by
using  $\neg$ and $J$, letting $\mathcal{T}$ be the resulting term.
Now we would need to modify $\mathcal{T}$ so that the
repetitions are taken into account. 
To introduce one repetition, first use $p$ on $\mathcal{T}$ repeatedly to
bring the involved coordinates to the
right end of tuples, and then use $I$.
Here $p$ suffices (and $s$ is not needed) because $\varphi_{cycle}$
was designed so that the repeated variable occurrences are
cyclically adjacent to 
each other in the variable ordering.
Thus it is now easy to see that we can form a term $\mathcal{T}'$
equivalent to $(L(v_1,v_2)
\wedge U(v_2,v_3) \wedge R(v_3,v_4)) \rightarrow D(v_4,v_1)$, and $\mathcal{T}'$ can
easily be modified to a term for $\varphi_{cycle}$.

From subformulas of $\varphi_{\mathbb{T}}$, consider the
formula $\neg (P_t(x) \wedge R(x,y) \wedge P_{t'}(y))$.
Here $\psi(x,y) :=  R(x,y) \wedge P_{t'}(y)$ is
equivalent to $\mathcal{T} := I J(R, P_{t'})$ 
and $P_t(x) \wedge \psi(x,y)$
thus to $p I J(p\mathcal{T},P_t)$.
The rest of $\varphi_{\mathbb{T}}$ and the other
remaining formulas are now easy to translate.
\end{proof}

\begin{corollary}
The satisfiability
problem of $\mathrm{GRA}(e,p,I,\neg,J,\exists)$ is $\Pi_1^0$-complete.
\end{corollary}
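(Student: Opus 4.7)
The plan is to obtain both the upper and lower bounds essentially for free, by combining Theorem \ref{undecnoswap} with the equiexpressivity of $\mathrm{GRA}$ and $\mathrm{FO}$ established in Theorem \ref{equiexpressivitytheorem}. In other words, this corollary is a pure monotonicity statement: adding the constant $e$ to $\mathrm{GRA}(p,I,\neg,J,\exists)$ can only enlarge the set of expressible terms, and the resulting system still sits inside full $\mathrm{GRA}$.

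For the upper bound I would note that every term $\mathcal{T}$ of $\mathrm{GRA}(e,p,I,\neg,J,\exists)$ is in particular a term of $\mathrm{GRA} = \mathrm{GRA}(e,p,s,I,\neg,J,\exists)$, and hence by Theorem \ref{equiexpressivitytheorem} it has an equivalent $\mathrm{FO}$-formula $\varphi_{\mathcal{T}}$ which is computable from $\mathcal{T}$. Satisfiability of $\mathcal{T}$ thus reduces to satisfiability of $\varphi_{\mathcal{T}}$, placing the problem at the same level of the arithmetical hierarchy as first-order satisfiability; this matches the upper bound claimed in the statement (and in the parallel Proposition \ref{nouproposition}).

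For the lower bound I would simply observe that every term of $\mathrm{GRA}(p,I,\neg,J,\exists)$ is also a term of $\mathrm{GRA}(e,p,I,\neg,J,\exists)$, interpreted identically on every model, because the semantic clauses for the operators $p,I,\neg,J,\exists$ do not reference the availability of the constant $e$. Consequently the reduction from the tiling problem for $\mathbb{N}\times\mathbb{N}$ given in the proof of Theorem \ref{undecnoswap} applies without any modification: the sentences $\varphi_{\mathbb{T}}\wedge\Gamma$ produced there are already terms of the larger algebra, and they are satisfiable if and only if $\mathbb{N}\times\mathbb{N}$ is $\mathbb{T}$-tilable. This inherits the $\Pi_1^0$-hardness directly.

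There is no real obstacle here; the only thing worth being explicit about is that extending the operator set cannot reduce expressive power, so the hardness transfers trivially, while the upper bound follows from the fact that the extension still lies within $\mathrm{FO}$. Combining the two bounds yields $\Pi_1^0$-completeness for $\mathrm{GRA}(e,p,I,\neg,J,\exists)$, as claimed.
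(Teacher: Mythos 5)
Your argument is correct and is exactly the (implicit) reasoning behind the paper's corollary: the lower bound is inherited monotonically from Theorem \ref{undecnoswap} since every term of $\mathrm{GRA}(p,I,\neg,J,\exists)$ is a term of the larger system, and the upper bound follows from containment in $\mathrm{FO}$. Nothing further is needed.
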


\section{Conclusion}

The principal aim of the article has been to introduce
our program that facilitates a systematic 
study of logics via algebras based on 
finite signatures.

After presenting $\mathrm{GRA}$, we proved it equivalent to $\mathrm{FO}$.
We also provided algebraic
characterizations for $\mathrm{FO}^2$, $\mathrm{GF}$
and fluted logic and introduced a
general notion of a \emph{relation operator}.
We then provided a comprehensive classification of the 
decidability of subsystems of $\mathrm{GRA}$. 
Out of the cases obtained by removing one operator, only 
the case for $\mathrm{GRA}$ without $p$ was left open.
In each solved case we also identified the related complexity.
Our work can be continued into many directions; the key is to 
identify relevant collections of \emph{relation operators}---as
defined above---and provide classifications for the
thereby generated systems. This work can naturally involve 
systems that capture $\mathrm{FO}$, but also stronger, weaker
and orthogonal ones. In addition to decidability, complexity 
and expressive power, also completeness of 
equational theories (including the one for $\mathrm{GRA}$) is an
interesting research direction. Furthermore, model checking
of different particular systems and
model comparison games for general as well as 
particular sets of relation operators are
relevant topics.

\medskip

\medskip

\medskip

\noindent
\textbf{Acknowledgements.}
The authors were supported by the Academy of Finland project \emph{Theory of
computational logics}, grant numbers 324435, 328987, 352419, 352420, 353027.



\bibliographystyle{plain}
\bibliography{alg}

\begin{thebibliography}{10}

\bibitem{andreka}
Hajnal Andr{\'{e}}ka, Istv{\'{a}}n N{\'{e}}meti, and Johan van Benthem.
\newblock Modal languages and bounded fragments of predicate logic.
\newblock {\em Journal of Philosophical Logic}, 27(3):217--274, 1998.

\bibitem{descriptionlogichandbook}
Franz Baader, Ralf K{\"{u}}sters, and Frank Wolter.
\newblock Extensions to description logics.
\newblock In Franz Baader, Diego Calvanese, Deborah~L. McGuinness, Daniele
  Nardi, and Peter~F. Patel{-}Schneider, editors, {\em The Description Logic
  Handbook: Theory, Implementation, and Applications}, pages 219--261.
  Cambridge University Press, 2003.

\bibitem{bacon}
John Bacon.
\newblock The completeness of a predicate-functor logic.
\newblock {\em Journal of Symbolic Logic}, 50:903--921, 1985.

\bibitem{barany}
Vince B{\'{a}}r{\'{a}}ny, Balder ten Cate, and Luc Segoufin.
\newblock Guarded negation.
\newblock {\em Journal of the {ACM}}, 62(3):22:1--22:26, 2015.

\bibitem{Be16}
Saguy Benaim, Michael Benedikt, Witold Charatonik, Emanuel Kieronski, Rastislav
  Lenhardt, Filip Mazowiecki, and James Worrell.
\newblock Complexity of two-variable logic on finite trees.
\newblock {\em {ACM} Transactions on Computational Logic}, 17(4):32:1--32:38,
  2016.

\bibitem{loosely}
Johan~Van Benthem.
\newblock Dynamic bits and pieces.
\newblock ILLC research report, University of Amsterdam, 1997.

\bibitem{borger97}
Egon B{\"{o}}rger, Erich Gr{\"{a}}del, and Yuri Gurevich.
\newblock {\em The Classical Decision Problem}.
\newblock Perspectives in Mathematical Logic. Springer, 1997.

\bibitem{Calvanese}
Diego Calvanese, Giuseppe {De Giacomo}, and Maurizio Lenzerini.
\newblock On the decidability of query containment under constraints.
\newblock In Alberto~O. Mendelzon and Jan Paredaens, editors, {\em Proceedings
  of the Seventeenth {ACM} {SIGACT-SIGMOD-SIGART} Symposium on Principles of
  Database Systems}, pages 149--158. {ACM} Press, 1998.

\bibitem{Witek16}
Witold Charatonik and Piotr Witkowski.
\newblock Two-variable logic with counting and trees.
\newblock {\em {ACM} Transactions on Computational Logic}, 17(4):31:1--31:27,
  2016.

\bibitem{codd}
Edgar~F. Codd.
\newblock Relational completeness of data base sublanguages.
\newblock {\em Research Report / {RJ} / {IBM} / San Jose, California}, {RJ987},
  1972.

\bibitem{clique}
Erich Gr{\"{a}}del.
\newblock Invited talk: Decision procedures for guarded logics.
\newblock In Harald Ganzinger, editor, {\em Automated Deduction - CADE-16, 16th
  International Conference on Automated Deduction, Proceedings}, volume 1632 of
  {\em Lecture Notes in Computer Science}, pages 31--51. Springer, 1999.

\bibitem{gradel99}
Erich Gr{\"{a}}del.
\newblock On the restraining power of guards.
\newblock {\em Journal of Symbolic Logic}, 64(4):1719--1742, 1999.

\bibitem{GKV97}
Erich Gr{\"a}del, Phokion Kolaitis, and Moshe Vardi.
\newblock On the decision problem for two-variable first-order logic.
\newblock {\em Bulletin of Symbolic Logic}, 3(1):53--69, 1997.

\bibitem{GOR97}
Erich Gr{\"a}del, Martin Otto, and Eric Rosen.
\newblock Two-variable logic with counting is decidable.
\newblock In {\em Proceedings, 12th Annual IEEE Symposium on Logic in Computer
  Science {LICS}}, pages 306--317. IEEE, 1997.

\bibitem{U1a}
Lauri Hella and Antti Kuusisto.
\newblock One-dimensional fragment of first-order logic.
\newblock In Rajeev Gor{\'{e}}, Barteld~P. Kooi, and Agi Kurucz, editors, {\em
  Invited and contributed papers from the tenth conference on "Advances in
  Modal Logic" {AiML}}, pages 274--293. College Publications, 2014.

\bibitem{hirsch}
Robin Hirsch and Ian Hodkinson.
\newblock {\em Relation algebras by games}.
\newblock North Holland, 2002.

\bibitem{ORDEREDFRAGMENTS}
Reijo Jaakkola.
\newblock {Ordered Fragments of First-Order Logic}.
\newblock In {\em 46th International Symposium on Mathematical Foundations of
  Computer Science (MFCS 2021)}, volume 202 of {\em Leibniz International
  Proceedings in Informatics (LIPIcs)}, pages 62:1--62:14, 2021.

\bibitem{oldversion}
Reijo Jaakkola and Antti Kuusisto.
\newblock Algebraic classifications for fragments of first-order logic and
  beyond.
\newblock {\em CoRR}, abs/2005.01184v1, 2020.

\bibitem{newversion}
Reijo Jaakkola and Antti Kuusisto.
\newblock Algebraic classifications for fragments of first-order logic and
  beyond.
\newblock {\em arXiv Preprint}, arXiv:2005.01184v2, 2021.

\bibitem{csl2023version}
Reijo Jaakkola and Antti Kuusisto.
\newblock Complexity classifications via algebraic logic.
\newblock In {\em Proceedings of Computer Science logic {CSL} 2023, to appear},
  2023.

\bibitem{U1b}
Emanuel Kieronski and Antti Kuusisto.
\newblock Complexity and expressivity of uniform one-dimensional fragment with
  equality.
\newblock In Erzs{\'{e}}bet Csuhaj{-}Varj{\'{u}}, Martin Dietzfelbinger, and
  Zolt{\'{a}}n {\'{E}}sik, editors, {\em Mathematical Foundations of Computer
  Science 2014 - 39th International Symposium, {MFCS}, Proceedings, Part {I}},
  volume 8634 of {\em LNCS}, pages 365--376. Springer, 2014.

\bibitem{Ki14}
Emanuel Kieronski, Jakub Michaliszyn, Ian Pratt{-}Hartmann, and Lidia Tendera.
\newblock Two-variable first-order logic with equivalence closure.
\newblock {\em {SIAM} Journal on Computing}, 43(3):1012--1063, 2014.

\bibitem{kihate}
Emanuel Kieronski, Ian Pratt{-}Hartmann, and Lidia Tendera.
\newblock Equivalence closure in the two-variable guarded fragment.
\newblock {\em J. Log. Comput.}, 27(4):999--1021, 2017.

\bibitem{kopcz}
Eryk Kopczynski and Tony Tan.
\newblock Regular graphs and the spectra of two-variable logic with counting.
\newblock {\em {SIAM} J. Comput.}, 44(3):786--818, 2015.

\bibitem{kuhn}
Steven~T. Kuhn.
\newblock An axiomatization of predicate functor logic.
\newblock {\em Notre Dame Journal of Formal Logic}, 24:233--241, 1983.

\bibitem{u1surv}
Antti Kuusisto.
\newblock On the uniform one-dimensional fragment.
\newblock In Maurizio Lenzerini and Rafael Pe{\~{n}}aloza, editors, {\em
  Proceedings of the 29th International Workshop on Description Logics, Cape
  Town, South Africa, April 22-25, 2016}, volume 1577 of {\em {CEUR} Workshop
  Proceedings}. CEUR-WS.org, 2016.

\bibitem{games19}
Antti Kuusisto.
\newblock On games and computation.
\newblock {\em CoRR}, abs/1910.14603, 2019.

\bibitem{semijoinrelationalgebra}
Dirk Leinders, Maarten Marx, Jerzy Tyszkiewicz, and Jan~Van den Bussche.
\newblock The semijoin algebra and the guarded fragment.
\newblock {\em Journal of Logic, Language and Information}, 14(3):331--343,
  2005.

\bibitem{lindstrom}
Per Lindstr{\"o}m.
\newblock First order predicate logic with generalized quantifiers.
\newblock {\em Theoria}, 32(3):186--195, 1966.

\bibitem{Loovenheimijoojoo}
Leopold L\"{o}wenheim.
\newblock \"{U}ber m\"{o}glichkeiten im relativkalk\"{u}l.
\newblock {\em Matematische Annalen}, 76(4):447--470, 1915.

\bibitem{Zeume13}
Amaldev Manuel and Thomas Zeume.
\newblock Two-variable logic on 2-dimensional structures.
\newblock In Simona Ronchi~Della Rocca, editor, {\em Computer Science Logic
  2013 {(CSL 2013)}}, volume~23 of {\em LIPIcs}, pages 484--499. Schloss
  Dagstuhl - Leibniz-Zentrum f{\"{u}}r Informatik, 2013.

\bibitem{packed}
Maarten Marx.
\newblock Tolerance logic.
\newblock {\em Journal of Logic, Language and Information}, 10(3):353--374,
  2001.

\bibitem{Maslov}
S.~Ju. Maslov.
\newblock The inverse method for establishing deducibility for logical calculi.
\newblock In V.~P. Orevkov, editor, {\em Logical and logical-mathematical
  calculus. Part~I, Trudy Mat. Inst. Steklov}, volume~98, pages 26--87. Public,
  1968.

\bibitem{Mogavero}
Fabio Mogavero and Giuseppe Perelli.
\newblock Binding forms in first-order logic.
\newblock In Stephan Kreutzer, editor, {\em 24th {EACSL} Annual Conference on
  Computer Science Logic, {CSL}}, volume~41 of {\em LIPIcs}, pages 648--665,
  2015.

\bibitem{mortimer}
Michael Mortimer.
\newblock Reasoning about strategies: On the model-checking problem.
\newblock {\em Mathematical Logic Quarterly}, 21(1), 1975.

\bibitem{mostowski}
Andrdzej Mostowski.
\newblock On a generalization of quantifiers.
\newblock {\em Fundamenta Mathematicae}, 44(1):12--36, 1957.

\bibitem{PST97}
Leszek Pacholski, Wieszlaw Szwast, and Lidia Tendera.
\newblock Complexity of two-variable logic with counting.
\newblock In {\em Proceedings, 12th Annual IEEE Symposium on Logic in Computer
  Science {LICS}}, pages 318--327. IEEE, 1997.

\bibitem{PH05}
Ian Pratt-Hartmann.
\newblock Complexity of the two-variable fragment with counting quantifiers.
\newblock {\em Journal of Logic, Language and Information}, 14(3):369--395,
  2005.

\bibitem{flutedlidiatendera}
Ian Pratt{-}Hartmann, Wieslaw Szwast, and Lidia Tendera.
\newblock The fluted fragment revisited.
\newblock {\em Journal of Symbolic Logic}, 84(3):1020--1048, 2019.

\bibitem{quine36}
Willard~Van Quine.
\newblock Toward a calculus of concepts.
\newblock {\em The Journal of Symbolic Logic}, I:2--25, 1936.

\bibitem{quine60}
Willard~Van Quine.
\newblock Variables explained away.
\newblock In {\em Proceedings of the American Philosophical Society}, 1960.

\bibitem{quinefluted}
Willard~Van Quine.
\newblock On the limits of decision.
\newblock In {\em Proceedings of the 14th International Congress of
  Philosophy}, volume III, pages 57--62. University of Vienna, 1969.

\bibitem{quine72}
Willard~Van Quine.
\newblock Algebraic logic and predicate functors.
\newblock In {\em Logic and Art}, pages 214--238. Bobbs-Merrill, Indianapolis,
  Indiana, 1972.

\bibitem{segoufin}
Luc Segoufin and Balder ten Cate.
\newblock Unary negation.
\newblock {\em Logical Methods in Computer Science}, 9(3), 2013.

\bibitem{torun}
Szymon Torunczyk and Thomas Zeume.
\newblock Register automata with extrema constraints, and an application to
  two-variable logic.
\newblock In Holger Hermanns, Lijun Zhang, Naoki Kobayashi, and Dale Miller,
  editors, {\em {LICS} '20: 35th Annual {ACM/IEEE} Symposium on Logic in
  Computer Science}, pages 873--885. {ACM}, 2020.

\bibitem{Voigt17}
Marco Voigt.
\newblock A fine-grained hierarchy of hard problems in the separated fragment.
\newblock In {\em 32nd Annual {ACM/IEEE} Symposium on Logic in Computer
  Science, {LICS}}, pages 1--12. {IEEE}, 2017.

\end{thebibliography}




\medskip


%

%

\medskip

\section{Appendix: An algebra for fluted logic}\label{flutedappendix}

\begin{definition}\label{fluteddefinition}
Here we provide the definition of fluted logic ($\mathrm{FL}$) as
given in \cite{flutedlidiatendera}.
Fix the infinite
sequence $\overline{v}_\omega = (v_1,v_2,\dots)$ of
variables. For every $k\in \mathbb{N}$, we
define sets $\mathrm{FL}^{k}$ as follows.
\begin{enumerate}
    \item Let $R$ be an $n$-ary relation symbol and  
    consider the subsequence $$(v_{k-n+1}, \dots , v_k)$$ 
    of $\overline{v}_\omega$ containing precisely $n$ variables.
    Then $R(v_{k-n+1}, \dots , v_k) \in \mathrm{FL}^k$.
    \item For every $\varphi, \psi \in \mathrm{FL}^k$, we
    have that $\neg \varphi, (\varphi \land \psi) \in \mathrm{FL}^k$.
    \item If $\varphi \in \mathrm{FL}^{k+1}$, then $\exists v_{k+1} \varphi \in \mathrm{FL}^k$.
\end{enumerate}
Finally, we define the set of
fluted formulas to be $\mathrm{FL} := \bigcup_k \mathrm{FL}^k$.
\end{definition}

\begin{proposition}\label{flutedcharacterizationappendix}
$\mathrm{FL}$ and $\mathrm{GRA}(\neg,\Dot{\cap},\exists)$ are equiexpressive.
\end{proposition}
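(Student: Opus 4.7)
The plan is to establish equiexpressivity by structural induction in both directions. The central observation that makes the induction work is that every $\varphi \in \mathrm{FL}^k$ has free variables forming a \emph{suffix} $\{v_{k-m+1}, \ldots, v_k\}$ of $(v_1, \ldots, v_k)$ for some uniquely determined $0 \leq m \leq k$; this follows by a trivial induction, since atoms are built on such a suffix by definition, Boolean combinations preserve the suffix property (the union of two suffixes being the longer of the two), and existential quantification strips off the top variable. The number $m$ is exactly the arity of the AD-relation defined by $\varphi$ and will guide the translation.

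For the direction from $\mathrm{FL}$ to the algebra, I translate each $\varphi \in \mathrm{FL}^k$ to a term of arity $m$ (the number of free variables, as above) by recursion: an atom $R(v_{k-n+1}, \ldots, v_k)$ becomes the relation symbol $R$ (itself an $n$-ary term); $\neg \varphi$ becomes $\neg \mathcal{T}_\varphi$; a conjunction $\varphi \wedge \psi$ becomes $\mathcal{T}_\varphi \Dot\cap \mathcal{T}_\psi$, whose arity $\max(m_\varphi, m_\psi)$ matches the arity of the conjunction; and $\exists v_{k+1} \varphi$ becomes $\exists \mathcal{T}_\varphi$. The last case handles both the situation in which $v_{k+1}$ is free in $\varphi$ (so $\mathcal{T}_\varphi$ has arity $\geq 1$ and $\exists$ projects away the last coordinate) and the situation in which it is not free (so $\mathcal{T}_\varphi$ has arity $0$ and $\exists \mathcal{T}_\varphi = \mathcal{T}_\varphi$, agreeing semantically with $\exists v_{k+1} \varphi$ on nonempty domains). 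Semantic correctness at each step is immediate from the definitions of $\neg$, $\Dot\cap$ and $\exists$.

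For the converse direction, I translate each term $\mathcal{T}$ of arity $m$ to a formula $\varphi(v_1, \ldots, v_m) \in \mathrm{FL}^m$, with a base atom $R$ of arity $n$ translating to $R(v_1, \ldots, v_n)$. The operators $\neg$ and $\exists$ are straightforward, the latter going from $\mathrm{FL}^m$ to $\mathrm{FL}^{m-1}$ by quantifying over $v_m$ (and being a no-op when $m = 0$). The main obstacle lies in the case of $\Dot\cap$. If $\mathcal{T}$ of arity $k$ and $\mathcal{S}$ of arity $\ell$ translate to $\varphi(v_1, \ldots, v_k)$ and $\psi(v_1, \ldots, v_\ell)$ with $k \leq \ell$, then $\mathcal{T} \Dot\cap \mathcal{S}$ should translate to $\varphi(v_{\ell-k+1}, \ldots, v_\ell) \wedge \psi(v_1, \ldots, v_\ell)$. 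Verifying that this lies in $\mathrm{FL}^\ell$ requires an auxiliary lemma asserting that shifting every (free and bound) variable index of a formula in $\mathrm{FL}^k$ uniformly upward by $\ell - k$ produces a formula in $\mathrm{FL}^\ell$. This lemma is proved by a routine induction on formula structure: the suffix structure of atoms and the arrangement of the quantifiers are manifestly preserved by a uniform upward shift of indices. Once this lemma is in place, the remainder of the induction goes through without difficulty, and the semantic equivalence of the translation follows from the definition of $\Dot\cap$.
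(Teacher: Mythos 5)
Your proof is correct and follows essentially the same route as the paper's: both directions proceed by structural induction, hinging on the observation that the free variables of a formula in $\mathrm{FL}^k$ form a suffix of $(v_1,\dots,v_k)$. The only cosmetic difference is in the term-to-formula direction, where the paper uses a family of translations $f_{v_m,\dots,v_k}$ parameterized by the target variable suffix, while you translate every term onto the canonical variables $v_1,\dots,v_m$ and invoke an explicit index-shifting lemma to realign the shorter conjunct in the $\Dot\cap$ case---the two devices do exactly the same work.
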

\begin{proof}
We first translate formulas to algebraic terms.
Formulas of the form $$R(v_{k-n+1},\dots ,v_k)$$ are
translated to $R$.
Note that when if $R$ has arity $0$,
then $R(v_{k-0+1},v_k)$ of course
denotes the formula $R$
(which translates to the term $R$).
Suppose then that $\neg \varphi, (\varphi
\wedge \psi) \in \mathrm{FL}^k$ and
that we have translated $\varphi$ to $\mathcal{T}$ and $\psi$ to $\mathcal{S}$.
We translate $\neg \varphi$ to $\neg \mathcal{T}$.
Now, observe that if $\alpha \in \mathrm{FL}^k$, then the free
variables of $\alpha$ form some
suffix of the sequence $(v_1,\dots , v_k)$.
Thus we can translate $(\varphi \land \psi)$ to $(\mathcal{T}\, \Dot{\cap}\,
\mathcal{S})$. Finally, if $\exists v_{k+1} \varphi
\in \mathrm{FL}^k$ and $\varphi$ translates to $\mathcal{T}$, then we can
translate $\exists v_{k+1} \varphi$ to $\exists\mathcal{T}$.

We then translate algebraic terms into fluted logic. 
An easy way to describe 
the translation is by
giving a family of translations $f_{v_m,\dots ,v_k}$ 
where $(v_m,\dots ,v_k)$ is a 
suffix of $(v_1,\dots , v_k)$. (It is also
possible that $f_{v_m,\dots ,v_k} = f_{v_{k+1},v_k}$ which
happens precisely when translating a term of arity zero.)
The translations are as follows.
\begin{enumerate}
    \item $f_{v_m,\dots,v_k}(R) := R(v_m,\dots,v_k)$
    for $ar(R)= {k-m+1}$. (When $\mathit{ar}(R) = 0$,
    then $R$ translates to $R$.)
    \item $f_{v_m,\dots,v_k}(\neg \mathcal{T})
    := \neg f_{v_m,\dots,v_k}(\mathcal{T})$ for $ar(\mathcal{T})= {k-m+1}$.
    \item $f_{v_m,\dots,v_k}(\mathcal{T}\, \Dot{\cap}\, \mathcal{S}) := f_{v_n,\dots,v_{k}}(\mathcal{T})\, 
    \wedge\, f_{v_{\ell},\dots,v_{k}}(\mathcal{S})$\ 
    for \ $ar(\mathcal{T}\, \Dot{\cap}\, \mathcal{S})= {k-m+1};$
    $ar(\mathcal{T})= {k-n+1}$;
    and 
    $ar(\mathcal{S})= {k-\ell+1}$.
    \item $f_{v_{m},\dots,v_k}(\exists \mathcal{T}) = \exists v_{k+1} f_{v_m,\dots,v_{k+1}}(\mathcal{T})$
    for $ar(\exists\mathcal{T})= {k-m+1}$.
\end{enumerate}
This concludes the proof.
\end{proof}

\medskip

\section{Appendix: An algebra for unary negation fragment}\label{unfoappendix}

We will start by formally defining the syntax of unary negation fragment.

\begin{definition}
    We define unary negation fragment ($\mathrm{UNFO}$) as the set of formulas generated by the following grammar
    \[\varphi ::= x = y \mid R(x_1,\dots,x_k) \mid \varphi \land \varphi \mid \varphi \lor \varphi \mid \exists x \varphi \mid \neg \varphi(x),\]
    where $R$ is a relation symbol and the formula $\varphi(x)$ has at most one free variable.
\end{definition}

Using the technique of Theorem \ref{equiexpressivitytheorem} one can easily show the following.

\begin{proposition}\label{unfocharacterizationappendix}
    $\mathrm{UNFO}$ and $\mathrm{GRA}(e,p,s,I,\neg_1,J,\overline{J},\exists)$ are equiexpressive.
\end{proposition}
\begin{proof}
    Clearly terms of $\mathrm{GRA}(e,p,s,I,\neg_1,J,\overline{J},\exists)$ can be translated to $\mathrm{UNFO}$. For the converse direction one can imitate the proof of Theorem \ref{equiexpressivitytheorem}. Using $e,p,s$ and $I$ we can again translate arbitrary atomic formulas into equivalent terms. Conjunctions and disjunctions can be translated using $J$ and $\overline{J}$ respectively together with $p,s$ and $I$. Existential quantifiers can be translated using $p$ and $\exists$. Finally, the unary negation can be translated by just using $\neg_1$.
\end{proof}

\medskip

\section{Appendix: 
Proof of Theorem \ref{theorem:unfo-fluted-complexity}}\label{appendix:unfo-fluted-complexity}

Here we prove Theorem \ref{theorem:unfo-fluted-complexity}. That is, we show that the satisfiability problem of $\mathrm{GRA}(e,s,\backslash,\Dot{\cap},\exists)$ is \textsc{ExpTime}-complete by designing an alternating polynomial space Turing machine which solves it (the lower bound being clear). Before starting, we first go through some useful notation that we are going to employ. We will use $\exists^n$ to denote a sequence of length $n$ that consists only of the symbol $\exists$, and we will use $\forall^n$ to denote $\neg \exists^n \neg$. Furthermore we will use $\cup$ and $\Dot{\cup}$ to denote the dual operators of $\cap$ and $\Dot{\cap}$, which are clearly definable using $\cap, \Dot{\cap}$ and $\neg$. Finally, given two $k$-ary terms $\mathcal{T}$ and $\mathcal{P}$, we will use $\mathcal{T} \models \mathcal{P}$ to denote that $\mathcal{T}$ entails $\mathcal{P}$, i.e., for every model $\mathfrak{A}$ and $\overline{a} \in A^k$ we have that if $\overline{a} \in \mathcal{T}^\mathfrak{A}$ then $\overline{a} \in \mathcal{P}^\mathfrak{A}$.

We begin by introducing a suitable version of Scott normal form for our algebra.

\begin{definition}
    We say that a $0$-ary term $\mathcal{T} \in \mathrm{GRA}(e,s,I,\backslash,\neg,\Dot{\cap},\exists)$ is in \textbf{normal form}, if it has the following form
    \[\bigcap_{i=1}^{n_\exists^1} \exists \kappa_i \cap \bigcap_{j=1}^{n_\forall^1} \forall \lambda_j \cap \bigcap_{i = 1}^{n_\exists} \forall^{n_i} (\neg \alpha_i \cup \exists \mathcal{P}_i) \cap \bigcap_{j = 1}^{n_\forall} \forall^{n_j} (\neg \beta_j \Dot{\cup} (\gamma_j \cup \forall(\neg \delta_j \cup \mathcal{S}_j))),\]
    where $\alpha_j$ are relation symbols, $\beta_j$ are either $Ie$ or a term of $\mathrm{GRA}(s,I)$, $\kappa_i, \lambda_j, \gamma_j, \delta_j$ are terms of $\mathrm{GRA}(e,s,I)$ and $\mathcal{P}$ and $\mathcal{S}$ are terms of $\mathrm{GRA}(e,s,\backslash,\neg,\Dot{\cap})$. The terms $\forall \lambda_j$ are called \textbf{unary universal requirements} while the terms $\forall^{n_j} (\neg \beta_j \cup (\gamma_j \cup \forall(\neg \delta_j \cup \mathcal{S}_j)))$ are called \textbf{polyadic universal requirements}.
\end{definition}

We will require the following easy technical lemma, which is essentially a simplified version of Lemma \ref{termguardlemma}.

\begin{lemma}\label{lemma:baby-term-guard}
    Let $\mathcal{T} \in \mathrm{GRA}(e,s,I,\backslash,\Dot{\cap})$ be a $k$-ary term. Then there exists a $k$-ary term $\alpha \in \mathrm{GRA}(e,s,I)$ such that $\mathcal{T}$ is equivalent with $\alpha \cap \mathcal{T}$.
\end{lemma}
\begin{proof}
    We prove the existence of $\alpha$ using induction. If $\mathcal{T}$ is $R$, for some relation symbol $R$, or $e$, then we can set $\alpha := \mathcal{T}$. Suppose then that the claim holds for $\mathcal{T}$ and $\mathcal{P}$, i.e., there exists $\alpha, \beta \in \mathrm{GRA}(e,s)$ such that $\mathcal{T}$ and $\mathcal{P}$ are equivalent with $\alpha \cap \mathcal{T}$ and $\beta \cap \mathcal{P}$ respectively. For $s\mathcal{T}$ we can choose the term $s\alpha$. For $I\mathcal{T}$ we can choose the term $I\alpha$. For $(\mathcal{T} \backslash \mathcal{P})$ we can take $\alpha$ (even if $ar(\mathcal{T}) \neq ar(\mathcal{P})$). Finally, if $ar(\mathcal{T}) \geq ar(\mathcal{P})$, then for $(\mathcal{T} \Dot{\cap} \mathcal{P})$ we can choose $\alpha$, and otherwise we can choose $\beta$.
\end{proof}

The following lemma is fairly standard, but proving it in our setting seems to require a quite involved argument.

\begin{lemma}
    There is a polynomial time nondeterministic procedure, taking as its input a $0$-ary term $\mathcal{T}_1 \in \mathrm{GRA}(e,s,\backslash,\Dot{\cap},\exists)$ and producing a $0$-ary term $\mathcal{T}_2 \in \mathrm{GRA}(e,s,I,\backslash,\neg,\Dot{\cap},\exists)$ in normal form (possibly over an extended signature) such that the following two conditions hold.
    \begin{enumerate}
        \item If $\mathcal{T}_1^\mathfrak{A}$ is non-empty, for some model $\mathfrak{A}$, then there exists a run of the procedure which produces a term $\mathcal{T}_2$ so that $\mathcal{T}_2^\mathfrak{A}$ is non-empty for some expansion of $\mathfrak{A}$.
        \item If the procedure has a run which produces $\mathcal{T}_2$, then for every model $\mathfrak{A}$ for which $\mathcal{T}_2^\mathfrak{A}$ is non-empty, we have that also $\mathcal{T}_1^\mathfrak{A}$ is non-empty.
    \end{enumerate}
\end{lemma}
\begin{proof}
    Choose an innermost subterm of $\mathcal{T}$ which is of the form $\exists \mathcal{P}$, where $\mathcal{P} \in \mathrm{GRA}(e,s,\backslash,\Dot{\cap})$. If $\mathcal{P}$ is unary, then we will guess a truth value, and replace $\exists \mathcal{P}$ with either $\bot$ or $\top$ according to this guess. If the resulting term is $\mathcal{T}'$, then $\mathcal{T}$ is equi-satisfiable with either $\mathcal{T}' \cap \exists \mathcal{P}$ or $\mathcal{T}' \cap \neg \exists \mathcal{P}$.
    
    Consider then the case where the arity of $\mathcal{P}$ is strictly greater than one. Let $\alpha \in \mathrm{GRA}(e,s)$ be the term guaranteed by Lemma \ref{lemma:baby-term-guard}. If $\alpha$ is equivalent with $e$, then we will replace $\mathcal{P}$ with $I\mathcal{P}$ in $\mathcal{T}$. 
    
    Suppose then that $\alpha$ is not equivalent with $e$, i.e., it is a term of $\mathrm{GRA}(s,I)$. Suppose that $\exists \mathcal{P}$ is an $n$-ary term. We now replace $\exists \mathcal{P}$ with a fresh $n$-ary relation $Q$. If $\mathcal{T}'$ denotes the resulting term, then $\mathcal{T}$ is equi-satisfiable with
    \[\mathcal{T}' \cap \forall^n(\neg Q \cup \exists (\alpha \cap \mathcal{P})) \cap \forall^n (Q \cup \forall (\neg \alpha \Dot{\cup} \neg \mathcal{P})).\]
    Note that $\forall^n (Q \cup \forall (\neg \alpha \Dot{\cup} \neg \mathcal{P}))$ is not yet in normal form. 
    
    Since $\mathcal{T}'$ is a $0$-ary term, there must be a subterm $\exists \mathcal{S}$ of $\mathcal{T}'$ such that $Q$ occurs as a subterm of $\mathcal{S}$. Now we will repeat the rewriting process that we did on $\exists \mathcal{P}$ on all of the (proper) subterms of $\mathcal{S}$ which are of the form $\exists \mathcal{R}$. If $\mathcal{S}'$ denotes the resulting term, then we will use $\mathcal{T}''$ to denote the term obtained from $\mathcal{T}'$ by replacing $\mathcal{S}$ with $\mathcal{S}'$. Note that $\mathcal{S}' \in \mathrm{GRA}(e,s,I,\backslash,\Dot{\cap})$. Thus, using Lemma \ref{lemma:baby-term-guard} again, we can deduce that it has a guard $\beta \in \mathrm{GRA}(e,s,I)$ such that $\mathcal{S}'$ is equivalent with $\beta \cap \mathcal{S}'$. 
    
    We have now two cases based on whether or not $\beta$ is equivalent with $e$. If $\beta$ is not equivalent with $e$, and hence is a member of $\mathrm{GRA}(s,I)$, then we first replace $\exists \mathcal{S}'$ with $\exists (\beta \cap \mathcal{S}')$, after which for every fresh relation symbol $Q$, that was introduced during the previous rewriting step, we replace the corresponding universal requirements
    \[\forall^n (Q \cup \forall (\neg \delta \Dot{\cup} \neg \mathcal{R}))\]
    with the following universal requirements
    \[\forall^n (\neg \beta \Dot{\cup} (Q \cup \forall (\neg \delta \Dot{\cup} \neg \mathcal{R}))),\]
    which are in normal form.
    
    Consider then the case where $\beta$ is equivalent with $e$. First, we replace $\exists \mathcal{S}'$ with the term $I\mathcal{S}'$. Then, for every fresh relation symbol $Q$ that was introduced during the previous rewriting step, we replace the corresponding universal requirements
    \[\forall^n (Q \cup \forall (\neg \delta \Dot{\cup} \neg \mathcal{R}))\]
    with the following universal requirements
    \[\forall^{n-1} (IQ \cup \forall (e \Dot{\cup} P)) \cap \forall^n (\neg P \cup \forall (\neg \delta \Dot{\cup} \neg \mathcal{R})),\]
    where $P$ is a fresh binary relation symbol. We note that the term $\forall^{n-1} (IQ \cup \forall (e \Dot{\cup} P))$ is still not in normal form. However, if $IQ$ is a unary term, then we can rewrite it as a term in normal form by employing $Ie$ as a guard.
    
    Suppose then that $IQ$ is at least a binary relation. Using again the fact $\mathcal{T}''$ is $0$-ary, $I\mathcal{S}'$ must occur as a subterm in a term of the form $\exists \mathcal{S}''$. Thus we can repeat the rewriting process that we did on $\exists \mathcal{S}$ on $\exists \mathcal{S}''$ until we either reach a term $\beta$ in $\mathrm{GRA}(s,I)$ that we can use as a guard or we reach a situation where $Ie$ can be used as a guard.
    
    Having done the previous rewriting, we reach a term of the form $\mathcal{T}_1 \cap \mathcal{T}_2$, where $\mathcal{T}_2$ is in normal form. If $\mathcal{T}_1$ is in normal form, then we are done. Otherwise we repeat the whole rewriting process on $\mathcal{T}_1$.
\end{proof}

Before we present the promised alternating polynomial space Turing machine for solving the satisfiability problem of $\mathrm{GRA}(e,s,\backslash,\Dot{\cap},\exists)$, we will need to introduce several technical definitions. Let $\tau$ be a relational vocabulary. A $k$-ary $(s,I)$-\textbf{atom over} $\tau$ is an $\ell$-ary term in $\mathrm{GRA}(s,I)$, where $\ell \leq k$. A $k$-ary $(s,I)$-\textbf{literal over} $\tau$ is a term in $\mathrm{GRA}(s,I,\neg)$ which is either of the form $\alpha$ or $\neg \alpha$, where $\alpha$ is a $k$-ary $(s,I)$-atom over $\tau$. For $k\geq 2$, we define $k$-ary $(s,I)$-\textbf{table} $\tau$ as a maximal set $\rho$ of $k$-ary $(s,I)$-literals of arity at least two over $\tau$ for which $\Dot{\bigcap}_{\alpha \in \rho} \alpha$ is satisfiable. We will identify tables $\rho$ with terms $\Dot{\cap}_{\alpha \in \rho} \alpha$. $1$-\textbf{type} is a maximally consistent set of unary $(s,I)$-literals over $\tau$. We will identify $1$-types $\pi$ with terms $\bigcap_{\alpha \in \pi} \alpha$. For $k\geq 2$, a $k$-\textbf{type} over $\tau$ is a triple $(\pi_1,\pi_2,\rho)$, where $\pi_1,\pi_2$ are $1$-types over $\tau$ and $\rho$ is a $k$-ary $(s,I)$-table over $\tau$, such that $\pi_1 \Dot{\cap} s(\pi_2 \Dot{\cap} \rho)$ is satisfiable. We will identify types $(\pi_1,\pi_2,\rho)$ with terms $\pi \Dot{\cap} s(\pi_2 \Dot{\cap} \rho)$. For notational ease, we will sometimes use $(\pi_1,\pi_2,\rho)$ to also denote the $1$-type $\pi_2$.

Given a $k$-type $(\pi_1,\pi_2,\rho)$ and a $(k+1)$-type $(\pi_1',\pi_2',\rho')$, we say that the latter is an \textbf{extension} of the former, if $\pi_1' = \pi_2$. Consider a $k$-type $(\pi_1,\pi_2,\rho)$ over $\tau$. A \textbf{reduct} of $(\pi_1,\pi_2,\rho)$ is an $(\ell + p)$-type $(\pi_1',\pi_2',\rho')$, such that one the following conditions holds.
\begin{enumerate}
    \item $\rho'$ consists of the set of all $p$-ary $(s,I)$-literals $\alpha$ for which there exists an unary term in $\pi_2$ which is equivalent with $I^p\alpha$, and $\pi_1' = \pi_2' = \pi_2$.
    \item $\rho'$ consists of the set of all $(\ell+p)$-ary $(s,I)$-literals $\alpha$ for which there exists an $\ell$-ary term in $\rho$ which is equivalent with $I^p \alpha$, and if $p = 0$, then $\pi_1' = \pi_1$ and $\pi_2' = \pi_2'$, and if $p > 0$, then $\pi_1' = \pi_2' = \pi_2$.
    \item $\rho'$ consists of the set of all $(\ell+p)$-ary $(s,I)$-literals $\alpha$ for which there exists an $\ell$-ary term in $\rho$ which is equivalent with $sI^p\alpha$, and if $p = 0$, then $\pi_1' = \pi_2$ and $\pi_2' = \pi_1$, and if $p > 0$, then $\pi_1' = \pi_2' = \pi_1$.
\end{enumerate}
Both in the first and in the second case we say that $(\pi_1',\pi_2',\rho')$ is a \textbf{simple reduct} of $(\pi_1,\pi_2,\rho)$.

Observe that so far types do not impose any equality constraints. To take these constraints into account, we will introduce two useful pieces of notation. First, if $\xi$ is a $k$-type and $\mathcal{P} \in \mathrm{GRA}(e,s,I,\backslash,\Dot{\cap})$ is a $k$-ary term, then we write $\xi \models^{=} \mathcal{P}$, if $\neg (e \Dot{\cap} (\xi \cap \mathcal{P}))$ is not satisfiable. Similarly, we write $\xi \models^{\neq} \mathcal{P}$, if $\neg (\neg e \Dot{\cap} (\xi \cap \mathcal{P}))$ is not satisfiable.

Consider a unary universal requirement $\forall \lambda_j$. We say that a type $\xi = (\pi_1,\pi_2,\rho)$ \textbf{violates} this universal requirement, if $\pi_2 \cup \lambda_j$ is not satisfiable. Consider then a polyadic universal requirement
\[\forall^{n_j} (\neg \beta_j \Dot{\cup} (\gamma_j \cup \forall(\neg \delta_j \cup \mathcal{S}_j)))\]
and let $\xi$ be a type and let $\xi'$ be its extension. We say that the pair $(\xi,\xi')$ \textbf{violates} this universal requirement, if one of the following conditions holds.
\begin{enumerate}
    \item $ar(\xi) = ar(\beta_j)$, $\xi \models \beta_j \Dot{\cap} \neg \gamma_j$ and $\xi' \models \delta_j$, but $\xi' \not\models^{\neq} \mathcal{P}_i$.
    \item There exists a $ar(\beta_j)$-type $\xi''$, which is a reduct of $\xi'$, and a $(ar(\beta_j) + 1)$-ary simple reduct $\xi'''$ of $\xi'$ such that $\xi'' \models \beta_j \Dot{\cap} \neg \lambda_j$ and $\xi''' \models \delta_j$, but $\xi''' \not\models^{=} \mathcal{S}_j$.
\end{enumerate}

The following estimate will play a crucial role when we will analyze the space requirements of our algorithm.

\begin{lemma}\label{lemma:no-of-literals}
    Let $\tau$ be a relational vocabulary of size $m$. Now the number of non-equivalent $(s,I)$-atoms over $\tau$ is at most $2mK$, where $K = \max \{ar(R) \mid R \in \tau\}$. Furthermore, the number of non-equivalent types over $\tau$ is bounded above by $2^{m(2K+2)}$.
\end{lemma}
\begin{proof}
    We start by noting that for every term $\mathcal{T}$ we have that $ss\mathcal{T}$ is equivalent with $\mathcal{T}$ and $Is\mathcal{T}$ is equivalent with $I\mathcal{T}$. In particular, every $(s,I)$-atom over $\tau$ is (up to equivalence) either of the form $I^n R$ or $sI^n R$, for some $k$-ary $R\in \tau$, where $k\geq n$. For a fixed $k$-ary relation $R$ the number of such terms is bounded above by $2k$ and hence the number of $(s,I)$-atoms is bounded above by $2mK$. The furthermore part follows from the fact that every $1$-type and every table is uniquely determined via the subset of $(s,I)$-atoms that it contains.
\end{proof}

Let $\mathfrak{A}$ be a $\tau$-model of vocabulary $\tau$ and let $(a_1,\dots,a_k) \in A^k$. We say that $(a_1,\dots,a_k)$ \textbf{realizes} a $k$-type $(\pi_1,\pi_2,\rho)$, if $a_{k-1} \in \pi_1^\mathfrak{A}, a_k \in \pi_2^\mathfrak{A}$ and $(a_1,\dots,a_k) \in \rho^\mathfrak{A}$. Notice that each $k$-tuple realizes a unique type. The type realized by a tuple $(a_1,\dots,a_k)$ in a model $\mathfrak{A}$ is denoted by $\mathrm{tp}_\mathfrak{A}(a_1,\dots,a_k)$.

Now we are finally in a position where we can define the promised alternating polynomial space Turing machine for solving the satisfiability problem of the system $\mathrm{GRA}(e,s,\backslash,\Dot{\cap},\exists)$. As its input the algorithm receives a $0$-ary term $\mathcal{T}$ in normal form; let $\tau$ denote the vocabulary of $\mathcal{T}$. We set $N_\tau := 2^{|\varphi|(2|\varphi|+2)}$. To simplify the presentation, we will assume that $n_\exists^1 = 1$.

\begin{enumerate}
    \item [] \textbf{Existentially guess} a $1$-type $\xi$ over $\tau$.
    \item [] \textbf{If} $\xi \not \models \kappa_1$ \textbf{then reject}
    \item [] \textbf{If} $\xi$ violates any unary universal requirements of $\mathcal{T}$ \textbf{then reject}
    \item [] \textbf{For} $1\leq c\leq N_\tau + 1$ \textbf{do}
    \begin{enumerate}
        \item [] \textbf{Universally choose} $1\leq i\leq n_\exists$ and a reduct $\xi'$ of $\xi$ so that $\alpha_i \in \xi'$
        \item [] Set $\xi'''$ to be the $(ar(\mathcal{P}_i))$-ary simple reduct of $\xi'$
        \item [] \textbf{If} $\xi''' \models^{=} \mathcal{P}_i$ \textbf{then skip}
        \item [] \textbf{Existentially guess} an extension $\xi''$ of $\xi'$
        \item [] \textbf{If} $\xi'' \not\models^{\neq} \mathcal{P}_i$ \textbf{then reject}
        \item [] \textbf{If} $(\xi',\xi'')$ violates any universal requirements of $\mathcal{T}$ \textbf{then reject}
        \item [] Set $\xi := \xi''$
    \end{enumerate}
    \item [] \textbf{Accept}
\end{enumerate}

It is straightforward to verify that the above algorithm is complete, i.e., if the input term is satisfiable, then the algorithm will accept. The following lemma shows that the algorithm is also sound.

\begin{lemma}
    If the above algorithm accepts a given $0$-ary term $\mathcal{T}$ in normal form, then the term is satisfiable.
\end{lemma}
\begin{proof}
    Suppose that the existential player $\exists$ has a winning strategy $\sigma$ in the game played over the configuration graph of the above algorithm when it receives the $0$-ary $\mathcal{T}$ in normal form as its input. Without loss of generality we can clearly assume that the extensions that $\sigma$ recommends $\exists$ to select depend only on the current type $\xi$ and not on the value of $c$.
    
    Now let $M$ denote the set of all $(s,I)$-types that $\sigma$ instructs $\exists$ to choose in different positions of the game. Since the for-loop is executed for $N_\tau + 1$ steps, where $N_\tau$ is an upper bound on the number of types over $\tau$, and the extensions that $\sigma$ recommends to $\exists$ do not depend on the value of $c$, every type in $M$ occurs as the current type $\xi$ in the for-loop at least once.
    
    Using $\sigma$ and $M$, we will construct a tree-like model for $\mathcal{T}$ in a step-by-step manner. More formally, we will construct a sequence of models
    \[\mathfrak{A}_0 \leq \mathfrak{A}_1 \leq \dots\]
    where $\mathfrak{A} \leq \mathfrak{B}$ means that $\mathfrak{A}$ is a submodel of $\mathfrak{B}$, in such a way that their union $\mathfrak{A}$ has the property that $\mathcal{T}^\mathfrak{A}$ is non-empty. During the construction we will maintain the invariant that all the types of \emph{live} tuples of these models, by which we mean either singletons or tuples that belong to the interpretation of some term from $\mathrm{GRA}(s,I)$, are from $M$. 
    
    We start the construction of this sequence as follows. Let $\xi$ denote the first $1$-type that $\sigma$ instructs $\exists$ to choose. As the first model $\mathfrak{A}_0$ we will take a model whose domain consists of a single element $a$ which satisfies the property that $\mathrm{tp}_{\mathfrak{A}_0}[a] = \xi$. Since $\sigma$ is a winning strategy for $\exists$, $\xi$ does not violate any unary universal requirements of $\mathcal{T}$.
    
    Suppose then that we have already constructed $\mathfrak{A}_n$ and we need to construct $\mathfrak{A}_{n+1}$. To achieve this, we need to provide witnesses for $n$-fresh tuples of $\mathfrak{A}_n$. Let $(a_1,\dots,a_k)$ be an arbitrary $n$-fresh tuple of $\mathfrak{A}_n$ and let $\xi$ denote the type that it realizes in $\mathfrak{A}_n$ and let $1\leq i\leq n_\exists$. We say that $(a_1,\dots,a_k)$ is an $i$-defect, if there exists a reduct $\xi'$ of $\xi$ so that $\alpha \in \xi'$, but $\xi''' \not\models^{=} \mathcal{P}_i$, where $\xi'''$ is the $ar(\mathcal{P}_i)$-ary simple reduct of $\xi'$. Our goal is to provide witnesses for $i$-defects.
    
    Suppose that $(a_1,\dots,a_k)$ is an $i$-defect. If $\xi$ denotes its type, then we let $\xi'$ denote a reduct of $\xi$ for which $\alpha_i \in \xi'$. Now there exists $\ell \leq k$ and $p \geq 0$ such that either $(a_\ell,\dots,a_{k-1},a_k,\dots,a_k)$ or $(a_\ell,\dots,a_k,a_{k-1},\dots,a_{k-1})$, where the suffix $(a_k,\dots,a_k)$ contains $p + 1$ elements, realizes $\xi'$. Suppose that the first tuple realizes $\xi'$ (the other case being entirely analogous). Let $\xi''$ be the type that $\sigma$ instructs player $\exists$ to choose. We now extend the model $\mathfrak{A}_n$ by adding a fresh element $b$ and by specifying that
    \[\mathrm{tp}_{\mathfrak{A}_{n+1}}(a_\ell,\dots,a_{k-1},a_k,\dots,a_k,b) = \xi''.\]
    Notice that since $\xi''$ is an extension of $\xi'$, the element $a_k$ has the same $1$-type both in $\xi'$ and in $\xi''$. Since $\sigma$ is a winning strategy, we have that $(a_\ell,\dots,a_{k-1},a_k,\dots,a_k,b) \in \mathcal{P}_i^{\mathfrak{A}}$. Furthermore, since $\sigma$ is a winning strategy, this assignment does not violate any of the universal requirements of $\mathcal{T}$.
    
    Having provided witnesses for all the $i$-defects, we note that the resulting model $\mathfrak{A}_{n+1}^*$ is still incomplete, since there might be single elements for which we have not yet specified their $1$-types (recall that by definition the types that we are using intuitively only specify the $1$-types of the last two elements). To each such element we will simply assign the $1$-type that we assigned to the single element in $\mathfrak{A}_0$. Having done this, there might still be tuples $(a_1,\dots,a_k)$ and relation symbols $R\in \tau$ such that we have not yet specified whether or not $(a_1,\dots, a_k)$ belongs to the interpretation of $R$. We complete the structure $\mathfrak{A}_{n+1}^*$ in a minimal way by specifying that in every such case the tuple does not belong to the interpretation of the corresponding relation symbol. The resulting model will be selected as the model $\mathfrak{A}_{n+1}$.
\end{proof}

To conclude, we note that since the binary encoding of $N_\tau$ is of size at most polynomial with respect to $|\mathcal{T}|$ and all the types over $\tau$ are also of size at most polynomial with respect to $|\mathcal{T}|$, our algorithm clearly uses only polynomial space. This completes the proof of Theorem \ref{theorem:unfo-fluted-complexity}.

\section{Appendix: Relational Herbrand formulas}

\begin{lemma}\label{HERBRANDONEELEMENT}
    Checking whether a relational Herbrand sentence has a model of domain size one can be done in polynomial time.
\end{lemma}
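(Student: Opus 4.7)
The plan is to exploit the fact that on a domain of size one, the distinction between $\forall$ and $\exists$ collapses: every variable assignment sends every variable to the unique element $a$. So a relational Herbrand sentence $Q_1 x_1 \dots Q_n x_n \bigwedge_i \eta_i$ has a model of size one iff the matrix $\bigwedge_i \eta_i$ is satisfied by the unique assignment sending every $x_j$ to $a$ in some $\{a\}$-structure. The question therefore reduces to: can one choose the relations $R^{\mathfrak{A}}$ (for the symbols $R$ appearing in the sentence) so that every literal $\eta_i$ comes out true under this fixed assignment?

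Next I would classify the literals. An equality literal $x_i = x_j$ evaluates to true regardless of the relations chosen, while a negated equality $\neg(x_i = x_j)$ evaluates to false; so if any negated equality occurs in the matrix, the answer is immediately negative. For a relational literal $R(x_{i_1}, \dots, x_{i_k})$ or its negation, the tuple of arguments evaluates to $(a, \dots, a)$, which is the \emph{only} $k$-tuple over the one-element domain. Hence the truth value of every literal involving $R$ is determined by the single bit ``does $(a,\dots,a)$ belong to $R^{\mathfrak{A}}$?''.

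Consequently, a one-element model exists iff (i) the matrix contains no negated equality literal, and (ii) for every relation symbol $R$ occurring in the sentence, the matrix does not contain both a positive and a negative literal with head $R$. If these conditions hold, we set $R^{\mathfrak{A}} = \{(a,\dots,a)\}$ whenever a positive literal with head $R$ appears, and $R^{\mathfrak{A}} = \varnothing$ otherwise; this witnesses satisfiability.

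The algorithm is then a single linear scan of the matrix that records, for each relation symbol encountered, whether a positive and/or a negative occurrence has been seen, and that flags any occurrence of $\neg(x_i = x_j)$. This runs in polynomial (in fact linear) time in the size of the input sentence, so I do not expect any substantial obstacle; the only care required is the straightforward bookkeeping over the set of relation symbols.
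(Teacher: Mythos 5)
Your proposal is correct and follows essentially the same route as the paper's proof: both collapse the quantifiers over the one-element domain, observe that every negated equality is automatically false and every equality automatically true, and reduce satisfiability to the absence of a relation symbol occurring both positively and negatively (the paper phrases this as the absence of complementary literals $\eta_i$ and $\neg\eta_i$ after identifying all variables). Your explicit witness construction and linear scan are just a slightly more detailed rendering of the same argument.
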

\begin{proof}
    Over models of size one, every relational Herbrand sentence is equivalent to a
sentence of the form $\forall x \bigwedge_i \eta_i$ where each $\eta_i$ is either an atomic formula or a negation of such. If the sentence contains a formula $\neg x = x$, then it is unsatisfiable. Otherwise such a sentence is satisfiable iff it does not contain two complementary formulas $\eta_i$ and $\neg \eta_i$, which can be easily verified in polynomial time.
\end{proof}

\begin{lemma}\label{HERBRANDEQUALITY}
    The satisfiability problem for relational 
    Herbrand sentences is \textsc{PTime}-complete.
\end{lemma}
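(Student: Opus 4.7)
The plan is to prove the two bounds of \textsc{PTime}-completeness separately. The lower bound is immediate: equality-free relational Herbrand sentences are a syntactic fragment of the full problem, and their satisfiability is already \textsc{PTime}-hard by Theorem~8.2.6 of \cite{borger97}.

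For the upper bound, given $\varphi = Q_1 x_1 \dots Q_n x_n \bigwedge_i \eta_i$, I would first use Lemma \ref{HERBRANDONEELEMENT} to decide in polynomial time whether $\varphi$ has a model of size one, accepting if so. Otherwise the task reduces to deciding satisfiability over models of size at least two, which I would handle by preprocessing the matrix to eliminate all equality literals and then invoking the known polynomial-time algorithm for equality-free Herbrand sentences. Writing each equality literal as $x_a = x_b$ or $\neg(x_a = x_b)$ with $a \leq b$, the preprocessing acts as follows. A positive conjunct $x_a = x_b$ with $Q_b = \exists$ is eliminated by substituting $x_a$ for $x_b$ throughout the matrix and deleting the now-vacuous quantifier $\exists x_b$, using the equivalence $\exists x_b (x_b = x_a \wedge \chi) \equiv \chi[x_a/x_b]$. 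A positive conjunct with $Q_b = \forall$ is declared unsatisfiable, as it would force $|A| \leq 1$. A negative conjunct $\neg(x_a = x_b)$ with $a = b$ or $Q_b = \forall$ is likewise declared unsatisfiable. A negative conjunct with $a < b$ and $Q_b = \exists$ is simply dropped.

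The main obstacle is justifying that dropping the retained negative equalities preserves satisfiability on models of size at least two. The plan is to use an \emph{inflation} argument: if the equality-free residue $\psi$ has a model $\mathfrak{M}$, consider $\mathfrak{M}' = M \times T$ with $|T|$ strictly larger than the number of literals in $\varphi$, and lift each relation $R$ by setting $R^{\mathfrak{M}'}((a_1,i_1),\dots,(a_k,i_k))$ to hold iff $R^{\mathfrak{M}}(a_1,\dots,a_k)$. Then $\psi$ remains satisfied in $\mathfrak{M}'$ under the naturally lifted Skolem strategy, since $\psi$ is equality-free. For each existential $x_b$ with retained inequality constraints against earlier variables $x_{a_1},\dots,x_{a_r}$, the number $r$ is bounded by the number of literals in $\varphi$, so at every instantiation of the universal prefix there is a tag available for $x_b$ differing from the tags of $x_{a_1},\dots,x_{a_r}$. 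Tuning the tag component of each existential's Skolem function accordingly yields a model of $\varphi$. Each preprocessing step is plainly polynomial-time, so the overall procedure runs in polynomial time.
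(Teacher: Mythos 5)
Your overall skeleton matches the paper's: both arguments get the lower bound from the equality-free case (Theorem~8.2.6 of \cite{borger97}) and both classify equality literals by the quantifier of the later-bound variable, using the one-point rule for positive equalities with an existential later variable, rejecting negative equalities whose later variable is universal, and invoking Lemma~\ref{HERBRANDONEELEMENT} when a positive equality has a universal later variable. The genuine divergence is in the one nontrivial case, a retained literal $\neg(x_a=x_b)$ with $x_b$ existential: the paper keeps this constraint by introducing a fresh binary symbol $E$, replacing the literal by $\neg E(x_a,x_b)$ and conjoining $\forall x\, E(x,x)$, thereby reducing to an equality-free Herbrand sentence that still encodes the inequality; you instead \emph{drop} the literal and prove that this is harmless via a model-inflation (tagging) argument, choosing the tag component of each existential Skolem value to avoid the at most $r<|T|$ tags of the finitely many variables it must differ from. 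Your lemma is sound and is arguably the more model-theoretic route (it isolates a reusable fact: equality-free Herbrand sentences are preserved under taking products with a set of tags, so inequalities against earlier variables come for free); the paper's reduction is more syntactic and keeps everything inside the target fragment without any semantic lemma. Both yield polynomial time.

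One degenerate case slips through your case analysis: a trivial positive literal $x=x$. The paper discards such literals at the outset; you normalize to $a\le b$ but only treat $a=b$ in the negative case. With $a=b$ and $Q_b=\forall$ your rule ``a positive conjunct with $Q_b=\forall$ forces $|A|\le 1$'' misfires --- e.g.\ $\forall x\exists y\,(x=x\wedge \neg(x=y))$ has no one-element model, so your algorithm reaches the preprocessing stage and wrongly rejects it, although it is satisfiable; and with $a=b$ and $Q_b=\exists$ the prescribed quantifier deletion is ill-defined. The fix is the paper's: delete all literals $x=x$ (and reject on $\neg\, x=x$) before the case split, so that every remaining equality involves two distinct variables.
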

\begin{proof}
The satisfiability problem for equality-free relational Herbrand formulas is \textsc{PTime}-complete by
Theorem 8.2.6 of \cite{borger97}, so the lower bound follows from that result. Thus we only need to 
discuss the upper bound here. We will show this by a reduction to equality-free Herbrand sentences, essentially by
showing how equalities can be eliminated.
Let $\varphi$ be a relational Herbrand sentence. If 
one of the conjuncts in $\varphi$ is a literal of type $\neg x=x$, then $\varphi$ is
unsatisfiable. And if one of the literals is $x=x$, we can remove it and thereby obtain an
equisatisfiable sentence. (In the extreme special case where all the conjuncts of $\varphi$
are equalities of type $x=x$, we cannot remove all of them, but in this case $\varphi$ is 
clearly satisfiable.) Therefore we may assume that each equality
and negated equality in $\varphi$ has two \emph{different} variables in it.

We also assume that every variable in $\varphi$ is quantified precisely once.
Moreover, for notational convenience, we assume that if $\varphi$
contains an atomic formula $x_i = x_j$ (or a negation of such), then the
quantifier binding $x_j$ appears in the scope of the quantifier binding $x_i$.
If $x_i$ is a variable appearing in $\varphi$, then we say
that it is existentially (respectively, universally) quantified if the quantifier binding it is existential (respectively, universal).

We then show how to eliminate and deal with equalities and negated equalities from $\varphi$ by considering a number of cases.

    Suppose first that $\varphi$ contains a formula $\neg x_i = x_j$, so that one of the following cases holds.
    \begin{enumerate}
        \item $x_i$ is existentially and $x_j$ universally quantified.
        \item $x_i$ and $x_j$ are both universally quantified.
    \end{enumerate}
    Then clearly $\varphi$ is not satisfiable, and we can reject it.
Suppose then that $\varphi$ contains a formula $x_i = x_j$ so that one of
the two conditions below holds.
    \begin{enumerate}
  \item $x_i$ is existentially and $x_j$ universally quantified.
        \item $x_i$ and $x_j$ are both universally quantified.
    \end{enumerate}
    Then $\varphi$ is clearly satisfiable iff it has a model of size one, which can be checked in polynomial time by lemma \ref{HERBRANDONEELEMENT}.
    
    In the remaining cases, we can eliminate equalities
and negated equalities from $\varphi$ as follows.
First suppose that $\varphi$ contains at least one instance of a formula $\neg x_i = x_j$ so that either
    \begin{enumerate}
        \item $x_i$ is universally and $x_j$ existentially quantified, or
        \item $x_i$ and $x_j$ are both existentially quantified.
    \end{enumerate}
    Then we introduce a fresh binary relation symbol $E$ and replace all such formulas $\neg x_i = x_j$ with the formula $\neg E(x_i,x_j)$. If $\varphi^*$ denotes the resulting formula, then clearly $\varphi$ is equisatisfiable with $\forall x E(x,x) \land \varphi^*$, which can be easily transformed to a relational Herbrand sentence.
    
    Suppose then that $\varphi$ contains an instance of a formula $x_i = x_j$ so that 
    \begin{enumerate}
        \item $x_i$ is universally and $x_j$ existentially quantified, or
        \item $x_i$ and $x_j$ are both existentially quantified.
    \end{enumerate}
    In both cases we can remove the existential quantifier which is binding the variable $x_j$ from the
sentence $\varphi$ and replace $x_j$ with $x_i$ in every atomic
formula that appears in $\varphi$.

    Thus we have essentially shown, for all types of 
equalities and negated equalities, 
how they can be eliminated from $\varphi$ so that an 
equisatisfiable formula is obtained.
Since the resulting sentence belongs to the Herbrand fragment without equality, its
satisfiability can be checked in polynomial time.
\end{proof}

\end{document}